\newcommand{\begeq} {\begin{equation}}
\newcommand{\eneq} {\end{equation}}
\newcommand{\begpf} {\begin {proof}}
\newcommand{\enpf} {\end {proof}}
\def\dst{\displaystyle}
\newcommand{\abs}[1]{\ensuremath{\left| #1 \right| }}
\newcommand{\norm}[1]{\lVert#1\rVert}
\def\sinc{\operatorname{sinc}} 
\newcommand{\ZZ}{\mathbb{Z}}
\newcommand{\CC}{\mathbb{C}}
\newcommand{\cB}{\mathcal{B}}
\newcommand{\inv}{^{-1}}
\def\supp{\operatorname{supp}}
\def\vect{\operatorname{span}}
\def\dst{\displaystyle}
\newtheorem{lemma}{Lemma}[section]
\newtheorem{prop}[lemma]{Proposition}
\newtheorem{theorem}[lemma]{Theorem}
\newtheorem{coro}[lemma]{Corollary}
\theoremstyle{definition}
\newtheorem{definition}[lemma]{Definition}
\newtheorem{example}[lemma]{Example}
\theoremstyle{remark}
\newtheorem{remark}[lemma]{Remark}
\newtheorem{conjecture}{Conjecture}
\newtheorem{problem}[conjecture]{Problem}
\def\C{{\mathbb{C}}}
\def\N{{\mathbb{N}}}
\def\R{{\mathbb{R}}}
\def\Z{{\mathbb{Z}}}
\def\ss{{\mathcal S}}
\def\un{\mathbf{1}}
\newcommand{\scal}[1]{{\left\langle{#1}\right\rangle}}
\newcommand{\newmphi}{\kappa_\phi}
\newcommand{\newdelta}{\Delta_m}
\title{Sampling the flow of a bandlimited function}
\author[A. Aldroubi ]{Akram Aldroubi }
\address{Department of Mathematics, Vanderbilt University, Nashville, TN, 37240, USA}
\email{akram.aldroubi@vanderbilt.edu}
\author[K. Gr\"ochenig]{Karlheinz Gr\"ochenig}
\address{Faculty of Mathematics, University of Vienna, Oskar-Morgenstern-Platz 1, 1090 Vienna, Austria}
\email{karlheinz.groechenig@univie.ac.at}
\author[L. Huang]{Longxiu Huang }
\address{Department of Mathematics, University of California, Los Angeles, CA, 90095, USA}
\email{ huangl3@math.ucla.edu}
\author[Ph. Jaming]{Philippe Jaming}
\address{Univ. Bordeaux, IMB, UMR 5251, F-33400 Talence, France. CNRS, IMB, UMR 5251, F-33400 Talence, France}
\email{Philippe.Jaming@math.u-bordeaux.fr}
\author[I. Krishtal ]{Ilya Krishtal}
\address{Department of Mathematical Sciences, Northern Illinois University, DeKalb, IL 60115, USA}
\email{ikrishtal@niu.edu}
\author[J.-L. Romero]{Jos\'e Luis Romero}
\address{Faculty of Mathematics, University of Vienna, Oskar-Morgenstern-Platz 1, 1090 Vienna, Austria\\and\\Acoustics Research Institute, Austrian Academy of Sciences, Wohl\-leben\-gasse 12-14, 1040 Vienna, Austria}
\email{jose.luis.romero@univie.ac.at, jlromero@kfs.oeaw.ac.at}
\begin{document}

\begin{abstract}
We analyze the problem of reconstruction of a bandlimited function $f$
from the space-time samples of its states $f_t=\phi_t\ast f$
resulting from the convolution with a kernel $\phi_t$. It is
well-known that, in natural phenomena, uniform space-time samples of
$f$  are not sufficient to reconstruct $f$ in a stable way. To enable
stable reconstruction, a space-time sampling with periodic nonuniformly spaced samples must be used as was shown by Lu and Vetterli. We show that the stability of reconstruction, as measured by a condition number, controls the maximal gap between the spacial samples. We provide a quantitative statement of this result. In addition, instead of irregular space-time samples, we show that uniform dynamical samples at sub-Nyquist spatial rate allow one to stably reconstruct the function $\widehat f$ away from certain, explicitly described blind spots. We also consider several classes of finite dimensional subsets  of bandlimited functions in which the stable reconstruction is possible, even inside the blind spots. We obtain quantitative estimates for it using Remez-Tur\'an type inequalities. En route, we obtain a Remez-Tur\'an inequality for prolate spheroidal wave functions. 
To illustrate our results, we present some numerics and explicit estimates for the heat flow problem. 
 
\end{abstract}

\maketitle

\section{Introduction}

In this paper, we consider the sampling and reconstruction problem of signals $u = u(t,x)$ that arise as an evolution of an initial signal $f = f(x)$ under the action of convolution operators. The initial signal $f$ is assumed to be
in the Paley-Wiener space $PW_c$, $c>0$ (fixed throughout this paper) given by 
\begin{align*}
PW_{c} := \left\{f \in L^2(\mathbb{R}):\supp(\widehat{f}) \subseteq [-c,c]\right\}
\end{align*}
with the Fourier transform  normalized as $\widehat{f}(\xi)=\int_\R f(t)e^{- it\xi}\,\mathrm{d}t$.

The functions $u$ are solutions of  initial value problems stemming from a physical  system. Thus, due to the semigroup properties of such solutions, there is a family of kernels $\{\phi_t: t> 0\}$ such that  $u(t,x)=\phi_t\ast f(x)$, $\phi_{t+s}=\phi_t\ast\phi_s$ for all $t,s\in (0,\infty)$,   and $f= \lim\limits_{t\to 0+}\phi_t\ast f$,  $f\in L^2(\R)$.

As we are primarily interested in physical systems, we typically consider the following set of kernels: 
\begin{align}
\label{DefOp}
\Phi_c= \{ \phi\in L^1(\mathbb{R}): \mbox{there exists }\newmphi>0\;\mbox{such that }  \newmphi\le\widehat \phi(\xi) \le 1 \text{ for } |\xi|\leq c, \widehat \phi(0)=1 \}.
\end{align}

Observe that $\phi\in L^1$ implies that $\widehat\phi$ is continuous and, therefore, the existence of $\newmphi > 0$ such that $\widehat \phi\geq \newmphi$ on $[-c,c]$ is equivalent to $\widehat \phi>0$ on $[-c,c]$. We remark that some of our results hold for a less restrictive class of kernels.

\begin{example}\label{ex1}
A prototypical example is the diffusion process with $\widehat\phi_t(\xi) = e^{-t\sigma^2\xi^2}$, $t>0$
It corresponds to the initial value problem (IVP) for the heat equation (with a diffusion parameter $\sigma\not=0$)
\begin{equation}\label{heateq}
\begin{cases}\partial_t u(x,t)=\sigma^2\partial_x^2u(x,t)&\mbox{for }x\in\mathbb{R}\mbox{ and }t>0\\
u(x,0)=f(x)&\end{cases},\end{equation} for which the solution is given by 
$u(x,t)=(\phi_t*f)(x)$.

Other examples include the IVP for 
the fractional diffusion equation $$\begin{cases}\partial_t u(x,t)=(\partial_x^2)^{\alpha/2} u(x,t)&\mbox{for }x\in\R\mbox{ and }t>0\\
u(x,0)=f(x)&\end{cases}, 0<\alpha\leq 1,$$ for which the solution is given by 
$u(x,t)=(\phi_t*f)(x)$ with $\widehat\phi_t(\xi) = e^{-t|\xi|^\alpha}$,
and the IVP for
the Laplace equation in the upper half plane $$\begin{cases}\Delta u(x,y)=0&\mbox{for }x\in\R\mbox{ and }y>0\\
u(x,0)=f(x)&\end{cases},$$ for which the solution is given by 
$u(x,y)=(\phi_y*f)(x)$ with %
$\widehat\phi_y(\xi) := e^{-y|\xi|}$.
\end{example}

The following problem serves as a motivation for  this paper.%

\begin{problem}\label{pro1}
Let $ \phi \in \Phi$, $L>0$, and $\Lambda\subset \R$ be a discrete subset of $\R$. What are the conditions that
allow one to recover a function $f \in PW_c$  in a stable way from the data set  
\begin{equation}
\label {meas}	
\{(f*\phi_t)(\lambda): \lambda\in \Lambda, 0 \leq t \leq L\}?
\end{equation}	 
The set of measurements \eqref {meas} is the image of an operator 
$\mathcal{T}:PW_c\to L^2\big(\Lambda\times [0,L]\big)$. 
Thus, the stable recovery of $f$ from \eqref {meas} amounts to finding conditions on $\Lambda, \phi$ and $L$ such 
that $\mathcal T$ has a bounded inverse from $\mathcal T(PW_c)$ to $PW_c$ or,	
equivalently, %
the existence of  $A,B>0$ such that 
	\begin{equation}\label{SemiContFrL}
	A \norm{f}_2^2 \leq \int_0^L \sum_{\lambda \in \Lambda} \abs{(f*\phi_t)(\lambda)}^2 dt \leq B \norm{f}_2^2, \text{ for all } f \in PW_c. 
	\end{equation}
	If for a given $\phi$ and $L$ the frame condition \eqref {SemiContFrL} is satisfied, we say that $\Lambda=\Lambda_{\phi,L}$ is a stable sampling set. %
\end{problem}

 \begin{remark}
  It was shown in \cite[Theorem 5.5]{AHP19}  that  
  $\Lambda_{\phi,L}$ is a stable sampling set 
  for some $L>0$, if and only if 
 $\Lambda_{\phi,1}$ is a stable sampling set.
 Thus, for qualitative results, 
  we will only  consider the case of $L=1$. For quantitative results, however, we may keep $L$ in order to estimate the optimal time length of measurements.
\end{remark}

 \begin{remark}
Whenever \eqref{SemiContFrL} holds, standard frame methods can be used for the stable reconstruction of $f$ \cite{FR05}.
\end{remark}

Let us discuss Problem  \ref{pro1} in more detail in the case of our prototypical example.

\subsection{Sampling the heat flow}
Consider the problem of sampling the temperature in a heat diffusion process initiated by a bandlimited 
function $f \in PW_c$:
\begin{align*}
f_t := f * \phi_t, \qquad 0 \leq t \leq 1,
\end{align*}
where $\phi_t$ is the heat kernel at time $t$:
\begin{align}\label{hk}
\widehat{\phi_t}(\xi) = e^{-t\sigma^2\xi^2},
\end{align}
with a parameter $\sigma\not=0$. 
According to Shannon's sampling theorem, $f$ can be stably reconstructed from
equispaced samples $\{f(k/T): k \in \mathbb{Z}\}$ if and only if the sampling rate $T$ is bigger than or 
equal to the critical value $T =\displaystyle \frac{c}{\pi}$, known as the \emph{Nyquist rate}. The Nyquist bound 
is universal in the sense that it also applies to irregular sampling patterns: if a bandlimited function can be 
stably reconstructed from its samples at $\Lambda \subseteq \mathbb{R}$, then the \emph{lower Beurling density}
\[ D^-(\Lambda):=\liminf_{r\rightarrow\infty}\inf_{x\in\mathbb{R}}\frac{\#(\Lambda\bigcap [x-r,x+r])}{2r}  \]
satisfies $D^{-}(\Lambda) \geq \displaystyle \frac{c}{\pi}$. Recall that the \emph{upper Beurling density} 
is defined by 
\[ D^+(\Lambda):=\limsup_{r\rightarrow\infty}\sup_{x\in\mathbb{R}}\frac{\#(\Lambda\bigcap [x-r,x+r])}{2r}.  \]

We are interested to know  if the spatial sampling rate can be reduced by using the information provided by the following spatio-temporal samples:
\begin{equation}
\label{eq_st_samp1}
\{f_t(k/T): k \in \mathbb{Z}, 0 \leq t \leq 1\}.
\end{equation}
Observe that the amount of collected data in \eqref{eq_st_samp1} is not smaller than that in the case of sampling 
at the Nyquist rate $T=\displaystyle \frac{c}{\pi}$. If $T <\displaystyle \frac{c}{\pi}$, however,  the density of 
sensors is smaller, and thus such a sampling procedure may provide considerable cost savings.

Lu and Vetterli showed \cite{Lu_2009} that for all $T < \displaystyle \frac{c}{\pi}$ there exist
bandlimited signals with norm 1 that almost vanish on the samples \eqref{eq_st_samp1}, i.e.~stable reconstruction 
is impossible from \eqref{eq_st_samp1}. As a remedy, they introduced periodic, nonuniform
sampling patterns $\Lambda \subseteq \mathbb{R}$ that do lead to a meaningful \emph{spatio-temporal trade-off}: 
there exist sets $\Lambda \subseteq \mathbb{R}$ that have sub-Nyquist density and, yet, lead to the frame inequality:
\begin{equation}\label{SemiContFr}
	A \norm{f}_2^2 \leq \int_0^1 \sum_{\lambda \in \Lambda} \abs{(f_t)(\lambda)}^2 \mathrm{d}t \leq B \norm{f}_2^2, 
	\text{ for all } f \in PW_c,
	\end{equation}
with $A,B>0$; see Example \ref{ex_low_dens} for a concrete construction. The emerging field of dynamical sampling 
investigates such phenomena in great generality (see, e.g., \cite{ACCMP17, ACMT17, ADK13, ADK15, AHP19}). 

As follows from Example \ref{ex_low_dens}, the estimates \eqref{SemiContFr} may hold with an arbitrary small sensor density. The meaningful trade-off between spatial and temporal resolution, however, is limited by the desired numerical accuracy. For example, in the following theorem we relate the \emph{maximal gap} of a stable sampling set  to the bounds from  \eqref{SemiContFr}.

\begin{theorem}\label{thm_gap_intro}
Let $\Lambda\subseteq\mathbb{R}$ be such that \eqref{SemiContFr} holds. Then there exists an absolute constant $K>0$ such that, for $\dst R\geq K\max\left(\frac{B}{A},\frac{1}{c}\right)$ and every $a\in\mathbb{R}$, we have 
$[a-R,a+R]\cap\Lambda\neq\emptyset$.
In particular, we have $D^-(\Lambda)\geq K^{-1}\min\left(\frac{A}{B},c\right)$ and $D^+(\Lambda)\leq KB$.
\end{theorem}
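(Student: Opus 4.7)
The plan is a proof by contrapositive: assume $\Lambda\cap[a-R,a+R]=\emptyset$ for some $a\in\R$ and bound $R$ from above by a quantity of the type $K\max(B/A,1/c)$. Choose a unit-norm test function $g\in PW_c$ well concentrated near the origin at the scale $1/c$ with fast polynomial decay; the $L^2$-normalised Fej\'er kernel $\hat g(\xi)=\sqrt{3\pi/c}\,(1-|\xi|/c)_+$ is a convenient explicit choice, giving $\|g\|_2=1$ and $|g(x)|\le C_0\sqrt c/(1+c|x|)^2$. Set $f_a(x):=g(x-a)$. Because $\hat\phi_t$ is smooth with $|\hat\phi_t|\le 1$, the Fourier representation of $\phi_t\ast f_a$ together with one integration by parts in $\xi$ produces the uniform-in-$t$ decay envelope $|(\phi_t\ast f_a)(y)|\le C\sqrt c/(1+c|y-a|)^2$. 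Plugging $f_a$ into the lower frame inequality \eqref{SemiContFr} and using the gap hypothesis to eliminate every $\lambda\in[a-R,a+R]$ yields
\begin{equation*}
A \;\le\; \int_0^1\sum_{|\lambda-a|>R}|(\phi_t\ast f_a)(\lambda)|^2\,\d t \;\le\; C^2\sum_{\lambda\in\Lambda,\,|\lambda-a|>R}\frac{c}{(1+c|\lambda-a|)^4}.
\end{equation*}

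To convert the tail sum into a numerical estimate I extract a local upper density bound from the \emph{upper} frame inequality. Applying \eqref{SemiContFr} to every translate $g(\cdot-y)$, integrating $y$ over $[b-M,b+M]$ and swapping sum and integral gives
\begin{equation*}
\#(\Lambda\cap[b-M/2,b+M/2])\cdot\int_0^1\!\int_{-M/2}^{M/2}|(\phi_t\ast g)(z)|^2\,\d z\,\d t\;\le\;2MB.
\end{equation*}
For $M\ge M_0/c$ with an absolute $M_0$, the left-hand double integral is bounded below by a positive constant $c_0$, because both $g$ and each $\phi_t\ast g$ (whose Fourier transform is dominated pointwise by $\hat g$) retain a definite fraction of their $L^2$-mass on any window of length $\gtrsim 1/c$; this uses $|\hat\phi_t|\le 1$ together with the lower bound $\hat\phi_t\ge\newmphi$ on $[-c,c]$ from the class $\Phi_c$. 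The outcome is the local density estimate $\#(\Lambda\cap I)\le K_1 B|I|$ for all intervals $I$ with $|I|\ge M_0/c$, which already supplies the bound $D^+(\Lambda)\le K_1 B$ of the theorem.

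Under the side condition $R\ge M_0/c$ I split the tail $\{|\lambda-a|>R\}$ into dyadic annuli $\{R 2^k\le|\lambda-a|<R 2^{k+1}\}$. The density estimate contributes at most $K_1 B R 2^k$ points in the $k$-th annulus, each giving $\lesssim c/(cR 2^k)^4$ to the tail sum; summing the resulting geometric series produces a bound of the form $A\le K_2 B/(cR)^3$, i.e.\ $cR\le(K_2 B/A)^{1/3}$. Combined with the side condition $R\ge M_0/c$ this yields the gap estimate $R\le K\max(B/A,1/c)$ for an absolute $K$ (one may sharpen the exponent with a smoother Schwartz-type bump or a prolate spheroidal function localised on $[-R/2,R/2]$). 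The density consequences then follow: any interval of length $2K\max(B/A,1/c)$ must meet $\Lambda$, giving $D^-(\Lambda)\ge(2K)^{-1}\min(A/B,c)$, while $D^+(\Lambda)\le KB$ was obtained in the density step.

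The principal obstacle is the quantitative lower bound on the normalising constant $c_0$: making it genuinely absolute and $\phi$-independent (beyond the dependence already absorbed into $B$) requires a uniform positive lower bound on $|\hat\phi_t|$ on $[-c,c]$ for $t\in[0,1]$, which is the structural property built into $\Phi_c$ and satisfied by the heat semigroup through $|\hat\phi_t(\xi)|\ge e^{-\sigma^2 c^2}$. Once $c_0$ is fixed, the rest of the argument is a routine combination of the decay envelope, the local density estimate, and the dyadic summation above.
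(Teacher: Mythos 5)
Your overall strategy is the same as the paper's (Lemma \ref{lem:estsinc} plus Theorem \ref{ConGapThm}): place a well-localized unit-norm test function at the center of a putative gap, derive a $t$-uniform decay envelope for its evolved samples by integration by parts on the Fourier side, extract a local counting bound for $\Lambda$ from the upper frame inequality, and play the tail sum against the lower frame bound. The genuine differences are the choice of test function (Fej\'er kernel rather than $\sinc(c\cdot)$, buying a faster envelope), the route to the local count (averaging over translates rather than the pointwise lower bound $\int_0^L|(\sinc(c\cdot)\ast\phi_t)(x)|^2\,\mathrm{d}t\geq c_{\phi,L}$ on $|x|\leq\pi/2c$), and dyadic rather than unit-scale blocking; all three variants are workable. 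One small technical slip: a single integration by parts only yields the envelope $(1+c|y-a|)^{-1}$; your claimed $(1+c|y-a|)^{-2}$ requires integrating by parts twice, which is legitimate here because $\widehat g$ is piecewise linear and $\widehat{\phi}^{\,t}$ is smooth for the Gaussian (so $(\widehat{\phi}^{\,t}\widehat g)''$ is a finite measure with mass controlled uniformly for $t\in[0,1]$), but should be said.

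The substantive gap is the endgame. From $A\leq K_2 B/(cR)^3$ you conclude $R\leq K_2^{1/3}c^{-1}(B/A)^{1/3}$, and this is \emph{not} of the form $K\max\left(\frac{B}{A},\frac{1}{c}\right)$ with $K$ independent of $B/A$ and $c$: the needed implication $c^{-1}(B/A)^{1/3}\leq K\max(B/A,c^{-1})$ is equivalent to $(B/A)^{1/3}\leq K\max(cB/A,1)$, which fails whenever $B/A$ is large and $cB/A\leq 1$ (take $B/A=t$, $c=1/t$, $t\to\infty$). So the stated gap bound, and with it the corollary $D^-(\Lambda)\geq K^{-1}\min(A/B,c)$, does not follow from what you prove; the sentence ``combined with the side condition $R\geq M_0/c$ this yields $R\leq K\max(B/A,1/c)$'' hides exactly this failure. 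The source of the mismatch is that you charge the tail at the \emph{linear} density $K_1B$ all the way down to the gap edge while your envelope is normalized at scale $1/c$; the extra two powers of decay from the Fej\'er kernel improve the exponent on $B/A$ but cannot remove the $1/c$ prefactor. The paper's version (Theorem \ref{ConGapThm}) keeps the dependence linear in $B/A$, at the price of a prefactor $\frac{8c}{\pi}\frac{C_{\Phi,L}}{c_{\Phi,L}}$ which, to be fair, is itself not an absolute constant (it carries powers of $1/c$ and a $\sigma$-dependence through \eqref{cest}); your constants $C$, $c_0$, $K_1$ inherit the same $\sigma$- and $\newmphi$-dependence. So the part of your write-up that needs real repair is the conversion of the tail estimate into the advertised form of the gap bound, not the analytic ingredients.
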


Theorem  \ref{ConGapThm}, which is a more general version of the above result, provides a more explicit dependence of $K$ on the   parameters of the problem.

Besides the constraints implied by Theorem \ref{thm_gap_intro},
the special sampling configurations of Lu and Vetterli that lead to \eqref{SemiContFr} lack the simplicity of regular sampling patterns.
In this article, we explore a different solution to the diffusion sampling problem. We consider sub-Nyquist equispaced spatial sampling patterns \eqref{eq_st_samp1} with $T=\displaystyle \frac{c}{m\pi}$,
$m \in \mathbb{N}$, and restrict the sampling/reconstruction problem to a subset $V \subseteq PW_c$, aiming for an inequality of the form:
\begin{align}
\label{eq_diff_samp}
A\|f\|_2^2 \le \int_0^1 \sum_{k\in\ZZ}\left|f_t\left(\frac{m\pi}{c}k\right)\right|^2\,\mathrm{d}t \le B\|f\|_2^2,
\qquad f \in V.
\end{align}
Specifically, we consider the following signal models.
\medskip

\noindent {\bf Away from blind spots}. We will identify a set $E$ with measure arbitrarily close to $1$ such that \eqref {eq_diff_samp} holds with $V=V_E=\{f\in PW_c:  \supp  \widehat f \subseteq E\}$. In effect, $E$ is the set 
$[-c,c]\setminus\mathcal{O}$ where $\mathcal{O}$ is a small open neighborhood of a  finite set, i.e., $E$ avoids a certain number of ``blind spots.'' 

 \begin{theorem}\label{sufconT0}
Let $ \phi \in \Phi$ and $m\geq 2$ be an integer. Then for any  $r>0$ there exists a certain compact set $E\subseteq [-c,c]$ of measure at least $2c-r$ such that %
 any $f\in V_E$ can be recovered from the samples 
$$
\mathcal M =\left\{f_t\left(\frac{m \pi}{c}k\right): k \in \mathbb{Z}, 0 \leq t \leq 1\right\}
$$
in a stable way. %
\end{theorem}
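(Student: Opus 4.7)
The plan is to diagonalize the sampling operator by frequency-domain folding, reducing the stable recovery question to the uniform invertibility of a family of $m\times m$ Gram matrices parameterized by a fundamental domain. For $f\in PW_c$, sampling $f_t$ at the spacing $T_s=m\pi/c$ produces, via Poisson summation, the $2c/m$-periodic aliased function
$$
S_t(\xi) \;=\; \sum_{\ell} \widehat{\phi_t}(\xi+2c\ell/m)\,\widehat f(\xi+2c\ell/m),\qquad \xi\in[-c/m,c/m],
$$
where $\ell$ runs over the (at most $m$) integers with $\xi+2c\ell/m\in[-c,c]$. By Plancherel for Fourier series, $\sum_k|f_t(kT_s)|^2$ equals a fixed positive multiple of $\int_{-c/m}^{c/m}|S_t(\xi)|^2\,\mathrm d\xi$, so integration in $t$ yields
$$
\int_0^1\sum_k|f_t(kT_s)|^2\,\mathrm dt \;=\; C\int_{-c/m}^{c/m} v(\xi)^{*}G(\xi)v(\xi)\,\mathrm d\xi,
$$
with $v(\xi)=(\widehat f(\xi+2c\ell/m))_\ell\in\mathbb C^m$ and $G(\xi)$ the Gram matrix in $L^2([0,1])$ of the exponential family $\{t\mapsto \widehat{\phi_t}(\xi+2c\ell/m)\}_\ell$. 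Simultaneously, Plancherel gives $\|f\|_2^2\propto \int_{-c/m}^{c/m}\|v(\xi)\|^2\,\mathrm d\xi$, so the frame inequality \eqref{eq_diff_samp} on $V_E$ is equivalent to a two-sided spectral bound on $G(\xi)$ restricted to the active index set $I(\xi)=\{\ell:\xi+2c\ell/m\in E\}$.

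Next I exploit the semigroup structure. Writing $\widehat{\phi_t}(\xi)=e^{t\mu(\xi)}$ with $\mu(\xi):=\log\widehat\phi(\xi)$ (well defined because $\widehat\phi\geq \newmphi>0$ on $[-c,c]$), the columns of $G(\xi)$ are a family of pure exponentials in $t$; by the classical linear independence of distinct exponentials on $[0,1]$, $G(\xi)$ is positive definite precisely outside the closed \emph{blind-spot set}
$$
\mathcal S \;:=\; \bigcup_{\ell\ne \ell'}\bigl\{\xi\in[-c/m,c/m]:\mu(\xi+2c\ell/m)=\mu(\xi+2c\ell'/m)\bigr\}.
$$
The crux of the proof is to show that $\mathcal S$ is a finite (hence negligible) set. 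For the physically relevant semigroups in $\Phi$ (heat, fractional diffusion, Laplace) this follows from real-analyticity of $\mu$ on $[-c,c]$; in general one uses that $\mu$ is a real-analytic function of $\xi$ arising from the generator of the semigroup, so each equation $\mu(\xi+a)=\mu(\xi+b)$ with $a\ne b$ has finitely many solutions in $[-c/m,c/m]$. This is the step where the assumption $\phi\in\Phi$ is decisive; everything else is continuity and compactness.

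With $\mathcal S$ finite, fix $r>0$ and choose an open neighborhood $\mathcal O_\delta\supset \mathcal S$ with $|\mathcal O_\delta|<r/m$. Set $E_0:=[-c/m,c/m]\setminus \mathcal O_\delta$ (compact) and unfold it to
$$
E \;:=\; [-c,c]\cap \bigcup_{\ell}\bigl(E_0+2c\ell/m\bigr).
$$
The $m$ shifts tile $[-c,c]$ up to a null set, so $E$ is compact with $|E|\geq 2c-r$. Moreover $E$ is designed so that for every $\xi\in\mathcal O_\delta$ \emph{all} aliases $\xi+2c\ell/m$ lie outside $E$; hence for $f\in V_E$ the vector $v(\xi)$ vanishes identically on $\mathcal O_\delta$, and only $\xi\in E_0$ contribute.

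Finally, by continuity of $\xi\mapsto G(\xi)$ and compactness of $E_0$, the smallest eigenvalue $\lambda_{\min}(G(\xi))$ attains a positive minimum $A'>0$ on $E_0$, while $\|G(\xi)\|\le m$ follows from $|\widehat{\phi_t}|\le 1$. Thus
$$
A'\,\|v(\xi)\|^2 \;\le\; v(\xi)^{*}G(\xi)v(\xi)\;\le\; m\,\|v(\xi)\|^2,\qquad \xi\in E_0,
$$
and integrating over $E_0$ yields the frame bound \eqref{eq_diff_samp} on $V_E$, which is the desired stable reconstruction. The hard part of the argument is the finiteness of the blind-spot set $\mathcal S$; the rest is bookkeeping through Plancherel, continuity, and Cauchy interlacing (to pass from $G(\xi)$ to its principal submatrices on $I(\xi)$).
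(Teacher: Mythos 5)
Your overall architecture matches the paper's: Poisson summation folds the samples into a $2c/m$-periodic function of $\xi$, the problem reduces to the Gram matrix $\mathcal B_m(\xi)$ of the exponentials $t\mapsto(\widehat\phi)_p^t\left(\tfrac{2c}{m}(\xi+\ell)\right)$ in $L^2([0,1])$ (Lemma \ref{lemma_calc}), and stability on $V_E$ amounts to a uniform lower bound on $\lambda_{\min}\left(\mathcal B_m(\xi)\right)$ over the folded set $\widetilde E$. Where you diverge is in how that lower bound is obtained: you invoke positive definiteness (linear independence of distinct exponentials) together with continuity and compactness, which yields only a qualitative $A'>0$; the paper instead proves a quantitative bound (Proposition \ref{cor:Sec5}, via Vandermonde and Riemann-sum estimates) of the form $\lambda_{\min}\left(\mathcal B_m(\xi)\right)\gtrsim \Delta_m(\xi)^2$, where $\Delta_m(\xi)$ is the product of the pairwise differences of the values $\widehat\phi_p\left(\tfrac{2c}{m}(\xi+j)\right)$, and then only needs these differences bounded below by some $\delta>0$ on $\widetilde E$ (Theorem \ref{sufconT}). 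Your soft argument is legitimate for the purely qualitative statement, but it cannot produce the explicit constant that the companion results (Theorem \ref{sufconT}, Proposition \ref{prop:gauss}) supply.

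The genuine gap is your claim that the blind-spot set $\mathcal S$ is finite. You justify it by real-analyticity of $\mu=\log\widehat\phi$, ``arising from the generator of the semigroup,'' but the class $\Phi$ in \eqref{DefOp} only requires $\phi\in L^1$ with $\kappa_\phi\le\widehat\phi\le1$ on $[-c,c]$: a single kernel in $\Phi$ carries no semigroup structure and $\widehat\phi$ is merely continuous there, so $\mathcal S$ can have positive measure (take $\widehat\phi$ constant on a subinterval of $[-c,c]$, or on all of it, in which case no compact $E$ of measure close to $2c$ can work and the conclusion itself fails). The paper does not close this for arbitrary $\phi\in\Phi$ either; it establishes the measure claim only under the additional hypothesis that $\widehat\phi$ is even and strictly decreasing on $\R_+$ (Corollary \ref{coro:main}), where a monotonicity argument — not analyticity — pins the coincidences down to $\xi\in\{0,\pm\tfrac12\}$. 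So your step needs either that structural hypothesis with the monotonicity argument, or an explicitly assumed analyticity of $\widehat\phi$ on $[-c,c]$; as written, ``$\phi\in\Phi$ is decisive'' does not make $\mathcal S$ finite. The remaining bookkeeping (excising a neighborhood of measure $<r/m$, unfolding to $E$ with $|E|\ge 2c-r$, the upper bound $\|\mathcal B_m(\xi)\|\le m$) is correct and matches the paper; the appeal to Cauchy interlacing is unnecessary, since vectors supported on $I(\xi)$ already inherit the bound from the full matrix.
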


The set $E$ in the above theorem depends only on $\phi$ and the choice of $r$. The stable recovery in this case means that \eqref {eq_diff_samp} holds with $B=1$ and some $A > 0$ which is estimated in
a more explicit version of the above result,  
Theorem \ref {sufconT}.

\medskip

\noindent {\bf Prolate spheroidal wave functions}.
The Prolate Spheroidal Wave Functions (PSWFs) are eigenfunctions of an integral operator known as the time-band liminting operator or sinc-kernel operator
$$
\mathcal{Q}_cf(x)=\int_{-1}^1\frac{\sin \pi c(y-x)}{\pi(y-x)}f(y)\,\mbox{d}y.
$$
Using the min-max theorem, we get that $\psi_{n,c}$ is the norm-one solution of
the following extremal problem
$$
\max\left\{\frac{\norm{f}_{L^2(-1,1)}}{\norm{f}_{L^2(\R)}}\,: f\in PW_c,\ 
f\in\vect\{\psi_{k,c}:\ k<n\}^\perp\right\}
$$
where the condition $f\in\vect\{\psi_{k,c}:\ k<n\}^\perp$ is void for $n=0$.
The family $(\psi_{n,c})_{n\geq 0}$ forms an orthogonal basis for $PW_c$ and has the property to form
an orthonormal sequence in $L^2(-1,1)$.

We consider the $N$-dimensional space
\begin{equation}
\label{eq_moda}
V_N=\mbox{span}\{\psi^c_0,\ldots,\psi^c_N\}\subset PW_c.
\end{equation}
The Landau-Pollak-Slepian theory shows that this subspace provides an optimal approximation
of a bandlimited function that is concentrated on $[-1,1]$. More precisely, $V=V_N$ minimizes the approximation error
\begin{align*}
\sup_{\stackrel{f \in PW_c}{\norm{f}_2=1}} \inf_{g \in V} \int_{-1}^{1} \abs{f(x)-g(x)}^2\, \mbox{d}x,
\end{align*}
among all $N$-dimensional subspaces of $PW_c$.

\medskip

\noindent {\bf Sparse sinc translates with free nodes}.
In this model, we let
\begin{equation}
\label{eq_modb}
V_N =\left\{\sum_{n=1}^N c_n\sinc c(x-\lambda_n)\,: c_1,\ldots,c_N\in\C,\ \lambda_1,\ldots,\lambda_N\in\R\right\}
\end{equation}
be the class of linear combinations of $N$ arbitrary translates of the sinc kernel 
$ \sinc(x)=\frac{\sin x}{ x} $
Note that $V_N$ is not a linear space.
However,  $V_N - V_N \subseteq V_{2N}$. Therefore, \eqref{eq_diff_samp} with $V=V_{2N}$ implies
\begin{align*}
A\|f-g\|_2^2 \le \int_0^1 \sum_{k\in\ZZ}\left|f_t\left(\frac{m\pi}{c}k\right)-g_t\left(\frac{m\pi}{c}k\right)\right|^2\,\mathrm{d}t \le B\|f-g\|_2^2,
\qquad f,g \in V_N,
\end{align*}
which ensures the numerical stability of the  sampling problem $f \mapsto \{f_t(m\pi k/c): k \in \mathbb{Z}: 0 \leq t \leq 1 \}$ restricted non-linearly to the class $V_N$. In other words, if \eqref{eq_diff_samp} holds with $V=V_{2N}$ then any $f\in V_N$ can be stably reconstructed from the samples \eqref{eq_st_samp1}.

\medskip

\noindent {\bf Fourier polynomials}.
As our last model, we consider the Fourier image of the space of polynomials of degree at most $N$ restricted  to the unit interval. Explicitly,
\begin{equation}
\label{eq_modc}
V_N =\left\{\sum_{n=0}^N c_n D^n\sinc c\cdot: c_0,\ldots,c_N\in\C\right\},
\end{equation}
where $D: PW_c \to PW_c$ is the differential operator  $Df = f^\prime$.
Observe that the union of such $V_N$, $N\in\N$, is dense in $PW_c$.

\medskip

In this article, we show that each of the above-mentioned signal models regularizes the diffusion sampling problem, albeit with possibly very large condition numbers.
\begin{theorem}
\label{thm:gauss}
Let $m \geq 2$ be an integer, $\Phi$ be given by $\widehat{\Phi}(\xi)=e^{-\sigma^2\xi^2}$. 
Let $V=V_N$ be given by \eqref{eq_moda}, 
\eqref{eq_modb}, or \eqref{eq_modc}. 
Then \eqref{eq_diff_samp} holds with
\begin{align*}
A =\frac{c\kappa_0(c)}{(\sigma c)^2+m}
\exp\Bigl(-\kappa_1(c)N-m^2\bigl(-\kappa_2(c)\ln\sigma+\kappa_3(c)\sigma^2+\ln m\bigr)\Bigr)
, \qquad 
B=1,
\end{align*}
where the $\kappa_j$'s are positive constants that depend on $c$ only. 
\end{theorem}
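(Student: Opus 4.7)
My plan is to reduce the space-time sampling inequality to a Fourier-side problem, analyze a Gram matrix of Gaussian exponentials at each ``aliased'' frequency, and invoke a Remez-Tur\'an inequality (one version for each of the three models) to control the resulting blind spots.

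Step 1 (\emph{Upper bound and Fourier reduction}). Using Plancherel together with the identity $e^{iak\cdot 2cj/m}=1$ valid for $a=m\pi/c$ and every $j,k\in\Z$, I would split $[-c,c]$ into $m$ subintervals of length $2c/m$ and recognize the samples $f_t(ak)$ as (rescaled) Fourier coefficients on $[-c/m,c/m]$ of the aliased profile
\begin{equation*}
G_t(\eta)=\sum_{j}e^{-t\sigma^2(\eta+2cj/m)^2}\,\widehat f(\eta+2cj/m),
\end{equation*}
where $j$ ranges over the indices for which $\eta+2cj/m\in[-c,c]$. Parseval on $[-c/m,c/m]$ then yields an identity of the form
\begin{equation*}
\int_0^1\sum_{k\in\Z}\Bigl|f_t\Bigl(\tfrac{m\pi}{c}k\Bigr)\Bigr|^2\,\mathrm{d}t
=\frac{c}{2\pi^2 m}\int_{-c/m}^{c/m}\int_0^1|G_t(\eta)|^2\,\mathrm{d}t\,\mathrm{d}\eta.
\end{equation*}
The upper bound $B=1$ follows from Cauchy-Schwarz in $j$ and $|\widehat{\phi_t}|\leq 1$, modulo a harmless renormalization.

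Step 2 (\emph{Gram matrix of Gaussians}). For fixed $\eta$, write $\vec a(\eta)=(\widehat f(\eta+2cj/m))_j$ and recognize $\int_0^1|G_t(\eta)|^2\,\mathrm{d}t=\scal{M(\eta)\vec a(\eta),\vec a(\eta)}$, where
\begin{equation*}
M(\eta)_{ij}=\int_0^1 e^{-t\sigma^2((\eta+2ci/m)^2+(\eta+2cj/m)^2)}\,\mathrm{d}t
=\frac{1-e^{-\sigma^2(\xi_i^2+\xi_j^2)}}{\sigma^2(\xi_i^2+\xi_j^2)},
\end{equation*}
with $\xi_j=\eta+2cj/m$. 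This is a Cauchy-like matrix whose smallest eigenvalue is controlled by the Cauchy/Hilbert determinant formula applied to the distinct squared frequencies $\xi_j^2$. Since $\xi_i^2=\xi_j^2$ precisely when $i=j$ or $\eta=-c(i+j)/m$, collisions occur inside $[-c/m,c/m]$ only at the three blind spots $\eta\in\{0,\pm c/m\}$. Away from these points one obtains a quantitative lower bound $\lambda_{\min}(M(\eta))\geq \gamma(\eta;\sigma,m,c)$ expressed as a product involving $|\eta-\eta_0|$ near each blind spot $\eta_0$; the $\sigma$- and $m$-dependence in the stated $A$ is produced here (the $\exp(m^2 \ln m)$ and $\exp(-m^2\kappa_3\sigma^2)$ factors come from the worst squared frequencies $(2c(m/2)/m)^2\sim c^2$ and from the Cauchy determinant evaluated on $m$ points).

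Step 3 (\emph{Killing the blind spots via Remez-Tur\'an}). Combining Steps 1 and 2 gives a lower bound of the form
\begin{equation*}
\int_0^1\sum_k|f_t(ak)|^2\,\mathrm{d}t\gtrsim\int_{-c}^{c}w(\xi)|\widehat f(\xi)|^2\,\mathrm{d}\xi,
\end{equation*}
for a weight $w$ that is bounded below on most of $[-c,c]$ but vanishes (to a computable order) at finitely many blind spots inside $[-c,c]$. To promote this into a genuine frame inequality $A\norm f_2^2\leq\cdots$ on $V_N$, I would invoke the Remez-Tur\'an inequality valid on the relevant model: the prolate version proved in this paper for \eqref{eq_moda}, the sparse-sinc Remez-Tur\'an inequality for \eqref{eq_modb}, and the classical polynomial Remez inequality for \eqref{eq_modc}. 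Each such inequality bounds $\norm{\widehat f}_{L^2(-c,c)}$ by $\norm{\widehat f}_{L^2(E)}$ on any set $E\subseteq[-c,c]$ of sufficiently large measure, with a constant that is exponential in $N$; inserting $E=[-c,c]\setminus\{\text{small arcs around blind spots}\}$ produces the factor $e^{-\kappa_1(c)N}$ in $A$.

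The main obstacle is Step 2: one needs a \emph{quantitative} spectral lower bound on the Gaussian Gram matrix $M(\eta)$ that is sharp enough near the collision points to be absorbed by the Remez-Tur\'an constant, and simultaneously explicit enough in $\sigma$ and $m$ to match the stated dependence. The Cauchy determinant identity for Gaussians seems to be the right tool, with the resulting product of factors $(\xi_i^2-\xi_j^2)^2/(\xi_i^2+\xi_j^2)$ giving rise to the polynomial vanishing in $\eta$ near blind spots, the $m^{-m^2}$-type factor, and the $\sigma^{-m^2}$-type factor visible in the statement. Once this estimate is in hand, combining with the three Remez-Tur\'an inequalities yields the three variants of the theorem in a uniform way.
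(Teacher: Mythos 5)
Your overall architecture is the same as the paper's: reduce via aliasing/Poisson summation to an $m\times m$ matrix acting on the vector of aliased Fourier values at each frequency (the paper's sampled diffusion matrix $\mathcal B_m(\xi)$, your $M(\eta)$), bound its smallest eigenvalue away from finitely many blind spots, and then close the blind spots with the three Remez--Tur\'an inequalities (classical Remez for \eqref{eq_modc}, Nazarov for \eqref{eq_modb}, and the new prolate version for \eqref{eq_moda}). Steps 1 and 3 match the paper, including the location of the collisions at $\eta\in\{0,\pm c/m\}$; the paper simply excises fixed neighborhoods of the blind spots (taking $\eta=1/8$ in Proposition \ref{prop:gauss}) rather than tracking the vanishing order of a weight, but that is a cosmetic difference.

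The genuine gap is in Step 2. The matrix you need to control is
\begin{equation*}
M(\eta)_{ij}=\int_0^1 e^{-t\sigma^2(\xi_i^2+\xi_j^2)}\,\mathrm{d}t=\frac{1-e^{-\sigma^2(\xi_i^2+\xi_j^2)}}{\sigma^2(\xi_i^2+\xi_j^2)},
\end{equation*}
which is a Pick matrix of the form $\bigl(1-c_ic_j\bigr)/(x_i+x_j)$, \emph{not} a Cauchy matrix $1/(x_i+x_j)$; the Cauchy/Hilbert determinant identity does not apply to it, and there is no analogous closed-form determinant for the finite-time Gram matrix. You also cannot simply dominate it from below by the Cauchy matrix $\int_0^\infty e^{-t(x_i+x_j)}\,\mathrm{d}t$, since the integral over $[0,1]$ is \emph{smaller} than the integral over $[0,\infty)$ in the positive-semidefinite order --- the comparison goes the wrong way, and reversing it is itself a Tur\'an-type problem for exponential sums. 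Moreover, even granting a determinant formula, a lower bound on $\det M$ does not by itself yield a lower bound on $\lambda_{\min}(M)$; one needs an extra device. The paper resolves both issues at once by discretizing the time integral: it writes $\mathcal B_m(\xi)=\lim_N \frac{1}{mN}W_N^*W_N$ for a tall Vandermonde matrix $W_N$ with nodes $v_j^{1/N}$, $v_j=\widehat\phi_p\bigl(\tfrac{2c}{m}(\xi+j)\bigr)^{1/m}$, applies the Yu--Gu estimate $\sigma_{\min}(V)\geq\bigl((m-1)/\norm{V}_F^2\bigr)^{(m-1)/2}\abs{\det V}$ to the square Vandermonde factor, and sums the resulting geometric series (Lemma \ref{lemV}, Corollary \ref{corV}, Proposition \ref{cor:Sec5}). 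This is where the $m^{-m^2}$, $\sigma$- and $\delta^{m(m-1)}$-type factors in $A$ actually come from. As the paper points out, the Pick-matrix machinery (Beckerman--Townsend) only delivers \emph{upper} bounds on $\lambda_{\min}$; the lower bound requires the Vandermonde route, and without it your Step 2 does not go through as written.
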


We provide a more precise expression for the lower frame constant in Theorem \ref{thm:gauss_body}. %
Note that the lower bound deteriorates when $\sigma^2 \to 0$ (no diffusion) and
$\sigma^2 \to +\infty$ (very rapid diffusion). This agrees with the intuition and numerical experiments for 
(non-bandlimited) sparse initial conditions presented in \cite{Ranieri2011SamplingAR}: if $\sigma^2$ is very small, 
because of spatial undersampling, some components of $f$ may be hidden from the sensors, while for large 
$\sigma^2$ the diffusion completely blurs out the signal and no information can be extracted.

\begin{remark} To simplify the discussion we take $c=1/2$ in this remark.
There are instances when  Theorem \ref{thm:gauss} applies for a signal $f\in V_N$ which cannot be recovered simply from its samples on, say, $2\Z$. As an example, we offer $V_1$ given by \eqref{eq_modb} with $\lambda_1 = 1$. The samples at time $t=0$ are not sufficient to identify each signal since
$\sinc(\cdot-1)\in V_N$  vanishes on $m \mathbb{Z}$, $m\ge 2$.  Similarly, for Theorem \ref  {sufconT0}: the function  $\sin (\omega \cdot) \sinc (\frac {\cdot} a)$, with an appropriately chosen $a$ and $\omega$, belongs to $V_E$ and vanishes on $m\Z$ for $m\ge 2$. In finite dimensional subspaces $V_N$, e.g., given by  \eqref {eq_moda} and \eqref {eq_modc},  sampling at time $t=0$ with any $m \in \N$ may be sufficient for stable recovery. However, the expected error of reconstruction in the presence of noise will be reduced if temporal samples are used in addition to those at $t=0$. 
Theorems \ref {sufconT0} and \ref {thm:gauss}   can be used together. For example, a function $f$ can be reconstructed away from the blind spots using Theorem \ref {sufconT0} and approximated around the blind spots using Theorem \ref {thm:gauss}.
\end{remark}

\subsection{Technical overview}\label{tech}

Lu and Vetterli explain the impossibility of subsampling the heat-flow of a bandlimited function on a grid \eqref{eq_st_samp1} as follows \cite{Lu_2009}. The function with Fourier transform
\begin{align*}
\widehat f := \delta_{-T} - \delta_{T}
\end{align*}
is formally bandlimited to $I =[-c,c]$ if $T<c$, and vanishes on the lattice
$\tfrac{\pi}{T} \mathbb{Z}$. Moreover, $f$ is an eigenfunction
of the diffusion operator since
\begin{align*}
\widehat f_t = e^{-t\sigma^2(-T)^2} \delta_{-T} - e^{-t\sigma^2T^2} \delta_{T} = 
e^{-t\sigma^2T^2} \widehat f,
\end{align*}
see \eqref{heateq} and \eqref{hk}.
Hence, all the diffusion samples \eqref{eq_st_samp1} vanish, although $f \not\equiv 0$.
While no Paley-Wiener function is infinitely concentrated at $\{-T,T\}$, a more formal argument can be given by regularization. If $\eta: \mathbb{R} \to \mathbb{R}$
is continuous and supported on $[-1,1]$ and $\eta_\varepsilon(x) = \varepsilon^{-1} \eta(x/\varepsilon)$, then
$f \cdot \widehat{\eta}_\varepsilon \in PW_c$ and provides a counterexample to \eqref{SemiContFrL},
provided that $\varepsilon$ is sufficiently small.

As we show below in Subsection \ref{ssds}, a similar phenomenon holds for more general diffusion kernels $\phi$ as in \eqref{DefOp}. Indeed, an analysis along the lines of the Papoulis sampling theorem \cite{P77} shows that the diffusion samples \eqref{eq_st_samp1} of a function $f \in PW_c$ do not lead to a stable recovery of $\widehat{f}$. However, these samples do allow for the stable recovery \emph{away from certain blind spots} determined by $\phi$; that is, one can effectively recover $\widehat{f} \cdot \un_{E}$, for a certain subset $E \subseteq I$
of positive measure ($\un_E$ denotes the characteristic function of the set $E$). If we, furthermore,
 restrict the sampling problem to one of the finite dimensional spaces $V=V_N$ given by \eqref{eq_moda}, \eqref{eq_modb},
or \eqref{eq_modc}, we may then  infer all other values of $\widehat f$. The main tools, in this case, are \emph{Remez-Tur\'an-like} inequalities of the form:
\begin{align*}
\norm{\widehat{f}\un_I} \leq C_E \norm{\widehat{f}\un_E}, \qquad f \in V.
\end{align*}
For Fourier polynomials
\eqref{eq_modc} the classical Remez-Tur\'an inequality provides an explicit constant $C_E$,
while the case of sparse sinc translates \eqref{eq_modb} is due to Nazarov \cite{Na}. The 
corresponding inequality for prolate spheroidal wave functions
\eqref{eq_moda} is new and a contribution of this article (our technique relies on \cite{JKS}).

\subsection{Paper organization and contributions}

In Section \ref{ssd}, we show that uniform dynamical samples at sub-Nyquist rate allow one to stably reconstruct the function $\widehat f$ away from certain, explicitly described blind spots determined by the kernel $\phi$. We also provide an upper and lower estimate for the lower frame bound in \eqref{eq_diff_samp}. The upper estimate relies on the standard formulas for Pick matrices (see, e.g. \cite{BT17, FO01}). The lower estimate is far more intricate and is based on the analysis of certain Vandermonde matrices. We also provide some numerics and explicit estimates in the case of the %
heat flow problem.

In Section \ref{rtpbs}, we restrict the problem to the sets  $V=V_N$  given by \eqref{eq_moda}, \eqref{eq_modb}, or \eqref{eq_modc}.
We provide quantitative estimates for the frame bounds in \eqref{eq_diff_samp}. En route, we obtain an explicit Remez-Tur\'an inequality for prolate spheroidal wave functions -- a result which we find interesting in its own right.

In Section \ref{gap}, we discuss the case of irregular spacial sampling. We recall that a stable reconstruction may be possible with sets $\Lambda$ that have an arbitrarily small (but positive) lower density. Nevertheless, we show that the maximal gap between the spacial samples (and, hence, the lower Beurling density) is controlled by the condition number of the problem (i.e.~the ratio $\frac BA$ of the frame bounds).

\section{Recovering a bandlimited function away from the blind-spot}\label{ssd}

\subsection{Dynamical sampling in $PW_c$}\label{ssds}
In this section, we recall some of the results on dynamical sampling from \cite{ADK15, AHP19} and adapt them for problems studied in this paper.

For $\phi \in L^1$, consider the function
\begin{align*}
\widehat\phi_p(x)=\sum_{k\in\Z}\widehat\phi(x-2ck)\un_{[-c,c)}(x-2ck),
\end{align*}
that is, the $2c$-periodization of the piece 
of $\widehat\phi$ supported in $[-c,c)$. 
 Recall that we consider kernels from the set $\Phi$ given by \eqref{DefOp}. Hence,
\begin{align*}
\newmphi\leq\widehat\phi_p(\xi)\leq 1, \qquad \xi \in \mathbb{R}.
\end{align*}
We also write
\begin{align*}
\widehat{f_t}(\xi) := \widehat{f}(\xi) \widehat{\phi}^t(\xi), \ f\in PW_c.
\end{align*}

Next, we  introduce the \emph{sampled diffusion matrix},  %
which is  the $m\times m$ matrix-valued function given by
\begin{equation}\label{matrB}
\mathcal B_m(\xi)=\left(\int\limits^1_0\overline{(\widehat{\phi})_p^t\left(\frac{2c}{m}(\xi+j)\right)}(\widehat{\phi})_p^t\left(\frac{2c}{m}(\xi+k)\right)\,\mathrm{d}t\right)_{0\leq j,k\leq m-1}
=\int_0^1\mathcal A_m^* (\xi,t)\mathcal A_m(\xi,t)\,\mathrm{d}t, 
\end{equation}
where 
\begin{eqnarray*}
\mathcal A_{m}(\xi,t)&=&\begin{pmatrix}\dst(\widehat{\phi})_p^t\left(\frac{2c}{m}(\xi+k)\right)
\end{pmatrix}_{k=0,\ldots,m-1}\\
&=&\begin{pmatrix}\dst
(\widehat{\phi})_p^t\left(\frac{2c}{m}\xi\right)&\cdots&\dst(\widehat{\phi})_p^t\left(\frac{2c}{m}(\xi+m-1)\right)\end{pmatrix}\in\mathcal{M}_{1,m}(\mathbb{C}).
\end{eqnarray*}

\begin{remark}
\label{rem_period}
Observe that the matrix function $\mathcal B_m$ is $m$-periodic. Its eigenvalues, however, are $1$-periodic  because the   matrices $\mathcal B_m(\xi)$ and $\mathcal B_m(\xi+k)$, $k\in\Z$, are similar via a circular shift matrix.
\end{remark}

The following lemma explains the role of the sampled diffusion matrix. In the lemma, we let
\begin{equation}
\label {vectorization}
\mathbf f(\xi)=\begin{pmatrix}\dst(\widehat f)_p\left(\frac {2c}{m}(\xi+j)\right)\end{pmatrix}_{j=0,\ldots,m-1}
=\begin{pmatrix}\dst(\widehat f)_p\left(\frac {2c}{m}\xi\right)\\\vdots\\
\dst(\widehat f)_p\left(\frac {2c}{m}(\xi+m-1)\right)
\end{pmatrix}
\in\mathcal{M}_{m,1}(\mathbb{C}).
\end{equation}
Note that if we recover $\mathbf f(\xi)$ for $\xi \in [0,1]$ then we can recover  $f_p$.
Observe also that

\begin{equation}
\label{eq_note}
\begin{split}
\int_0^1\|\mathbf f(\xi)\|^2\,\mbox{d}\xi
&=\sum_{j=0}^{m-1}\int_0^1\abs{(\widehat f)_p\left(\frac{2c}{m}(\xi+j)\right)}^2\,\mbox{d}\xi 
=\frac{m}{2c}\sum_{j=0}^{m-1}\int_{2cj/m}^{2c(j+1)/m}|(\widehat f)_p(u)|^2\,\mbox{d}u\\
&=\frac{m}{2c}\int_{0}^{2c}|(\widehat f)_p(s)|^2\,\mbox{d}s
=\frac{m}{2c}\int_{-c}^{c}|\widehat f(s)|^2\,\mbox{d}s
\end{split}
\end{equation}
In other words, $f\mapsto \sqrt{\frac{2c}{m}}\mathbf f: PW_c\to L^2([0,1],\mathcal{M}_{m,1}(\mathbb{C}))$
is an isometric isomorphism.

\begin{lemma}
\label{lemma_calc}
For $f \in PW_c$,
\begin{equation}\label{eq:lister}
\int_0^1 \sum_{k\in\Z}\left|f_t\left(\frac{m \pi}{c}k\right)\right|^2\,\mathrm{d}t
=\left(\frac{c}{m\pi}\right)^2\int_0^1\mathbf f(\xi)^*\mathcal B_m(\xi)\mathbf f(\xi)\,\mathrm{d}\xi.
\end{equation}
\end{lemma}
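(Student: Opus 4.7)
The plan is to turn the sum of squared samples into a frequency-side integral via Poisson summation/Plancherel, identify the resulting periodization as the product $\mathcal{A}_m(\xi,t)\mathbf f(\xi)$, and integrate in $t$.

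\textbf{Step 1 (Plancherel for samples).} With sampling step $T=m\pi/c$ the dual period is $\omega_0=2\pi/T=2c/m$. Since $f\in PW_c$ and $\widehat\phi$ is bounded on $[-c,c]$ we have $f_t\in L^2(\R)$. Applying Parseval's identity to the Fourier series of the $\omega_0$-periodic function $\Phi_t(\xi):=\sum_{\ell\in\Z}\widehat{f_t}(\xi+\ell\,2c/m)$ (whose $n$-th Fourier coefficient is $Tf_t(-nT)$) yields
\begin{equation*}
\sum_{k\in\Z}\bigl|f_t(m\pi k/c)\bigr|^2=\frac{1}{2\pi T}\int_0^{2c/m}|\Phi_t(\xi)|^2\,\mathrm d\xi.
\end{equation*}
The change of variable $\xi=\tfrac{2c}{m}\eta$ collects a factor $2c/m$ and gives the prefactor $(c/m\pi)^2$:
\begin{equation*}
\sum_{k\in\Z}\bigl|f_t(m\pi k/c)\bigr|^2=\left(\frac{c}{m\pi}\right)^2\!\int_0^1 \Bigl|\sum_{\ell\in\Z}\widehat{f_t}\bigl(\tfrac{2c}{m}(\eta+\ell)\bigr)\Bigr|^2\mathrm d\eta.
\end{equation*}

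\textbf{Step 2 (identification with $\mathcal A_m\mathbf f$).} Since $\widehat{f_t}=\widehat f\cdot\widehat\phi^{\,t}$ and $\widehat f$ is supported in $[-c,c]$, for each $\eta\in[0,1]$ exactly $m$ of the translates $\tfrac{2c}{m}(\eta+\ell)$ fall in $[-c,c]$; the rest contribute $0$. Reindexing those $m$ translates by $j\in\{0,\dots,m-1\}$ via the $2c$-periodicity built into the definition of $(\widehat f)_p$ and $(\widehat\phi)_p$, one checks directly that
\begin{equation*}
\sum_{\ell\in\Z}\widehat{f_t}\bigl(\tfrac{2c}{m}(\eta+\ell)\bigr)=\sum_{j=0}^{m-1}(\widehat\phi)_p^{\,t}\bigl(\tfrac{2c}{m}(\eta+j)\bigr)\,(\widehat f)_p\bigl(\tfrac{2c}{m}(\eta+j)\bigr)=\mathcal A_m(\eta,t)\,\mathbf f(\eta).
\end{equation*}
Concretely, for each $j$ the argument $\tfrac{2c}{m}(\eta+j)$ lies either in $[0,c]$, where $(\widehat f)_p$ and $(\widehat\phi)_p$ coincide with the unperiodized functions, or in $[c,2c]$, where by $2c$-periodicity they equal the originals at the shifted point in $[-c,0]$; in both cases this realizes one of the $m$ non-vanishing Poisson translates and pairs it with the matching factor of $\widehat\phi^{\,t}$ at the same point.

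\textbf{Step 3 (integrate in $t$).} Squaring the identity of Step 2 gives
\begin{equation*}
\Bigl|\sum_{\ell\in\Z}\widehat{f_t}\bigl(\tfrac{2c}{m}(\eta+\ell)\bigr)\Bigr|^2=\mathbf f(\eta)^*\mathcal A_m^*(\eta,t)\,\mathcal A_m(\eta,t)\,\mathbf f(\eta).
\end{equation*}
Integrating in $t\in[0,1]$ and applying Fubini to exchange the $t$- and $\eta$-integrals turns $\int_0^1\mathcal A_m^*(\eta,t)\mathcal A_m(\eta,t)\,\mathrm dt$ into $\mathcal B_m(\eta)$ by definition \eqref{matrB}, yielding \eqref{eq:lister}.

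The only delicate point is Step~2: one must carefully track which Poisson translate corresponds to which index $j$ and verify that the values of $(\widehat f)_p$ and $(\widehat\phi)_p^{\,t}$ at $\tfrac{2c}{m}(\eta+j)$ exactly reproduce the non-zero terms of the Poisson sum $\sum_\ell\widehat f\,\widehat\phi^{\,t}\bigl(\tfrac{2c}{m}(\eta+\ell)\bigr)$. Once this bookkeeping is done, the rest is Plancherel and Fubini.
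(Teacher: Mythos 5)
Your proof is correct and follows essentially the same route as the paper's: both identify the periodization $\sum_{\ell}\widehat{f_t}\bigl(\tfrac{2c}{m}(\xi+\ell)\bigr)$ with $\mathcal A_m(\xi,t)\mathbf f(\xi)$ via the support of $\widehat f$ and the $2c$-periodicity of $(\widehat f)_p$ and $(\widehat\phi)_p$, and both convert the sum of squared samples into a frequency-side integral by Parseval for Fourier series before integrating in $t$ to produce $\mathcal B_m$. The only difference is cosmetic ordering (you apply Parseval at the start, the paper defines $b(\xi,t)$ and applies it at the end), and your constant bookkeeping checks out.
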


\begin{proof}
Observe that it suffices to prove the result in $PW_c\cap \ss(\R)$ (the Schwarz class). Consider the function
\[
b(\xi,t)=\sum_{k\in\Z}f_t\left(\frac{m \pi}{c}k\right)e^{-2i\pi k\xi}.
\]
Using the Poisson summation formula and the definition of $f_t$, we get
\begin{eqnarray*}
b(\xi,t)&=&\frac{c}{m\pi}\sum_{j\in\Z}\widehat{f_t}\left(\frac{2c}{m}(\xi+j)\right)
=\frac{c}{m\pi}\sum_{-\frac{m}{2}-\xi\leq j< \frac{m}{2}-\xi}
\widehat\phi^t\left(\frac{2c}{m}(\xi+j)\right)\widehat f\left(\frac{2c}{m}(\xi+j)\right)\\
&=&\frac{c}{m\pi}\sum_{j=0}^{m-1}
(\widehat\phi)_p^t\left(\frac{2c}{m}(\xi+j)\right)(\widehat f)_p\left(\frac{2c}{m}(\xi+j)\right),
\end{eqnarray*}
Note that the  functions $b(\cdot, t)$ are $1$-periodic,
\begin{equation}\label{convert}
b(\xi,t)=\frac{c}{m\pi}\mathcal A_m(\xi,t)\mathbf f(\xi), 
\end{equation}
and thus
\[
\int_0^1|b(\xi,t)|^2\,\mathrm{d}t=\left(\frac{c}{m\pi}\right)^2\mathbf f(\xi)^*\mathcal B_m(\xi)\mathbf f(\xi),\ \xi\in \R.
\]
Combining the last equation with the
 Parseval's relation
\begin{align}
\label{eq_lll1}
\int_0^1|b(\xi,t)|^2d\xi=\sum_{k\in\Z}\left|f_t\left(\frac{m \pi}{c}k\right)\right|^2.
\end{align}
yields the desired conclusion. %
\end{proof}

\begin {remark}
\label {vecvsper}
Lemma \ref{lemma_calc} shows that the stability of reconstruction from  spatio-temporal samples
\label{eq_st_samp} is controlled by the condition number of the self-adjoint matrices $\mathcal B_m(\xi)$ in \eqref{matrB}. For symmetric $\phi \in \Phi$ and $m \geq 2$, however,
\begin{align*}
\inf_{\xi \in [0,1]} \lambda_{\mathrm{min}} \big(\mathcal B_m(\xi) \big) = \lambda_{\mathrm{min}} \big(\mathcal B_m(0) \big) = 0,
\end{align*}
which precludes the stable reconstruction of all $f \in PW_c$, see, e.g., \cite{ADK15}.
This adds to our explanation of the phenomenon of blind spots in Subsection \ref{tech}.
We can nonetheless hope to find a large set $\widetilde E \subseteq [0,1]$ such that 
$\lambda_{\mathrm{min}} \big(\mathcal B_m(\xi) \big)\geq \kappa$ for $\xi \in \widetilde E$. Then,
repeating the computation in \eqref{eq_note}, we get
\begin{equation}
\label{eq:restricted}
\begin{split}
\int_0^1 \sum_{k\in\Z}\left|f_t\left(\frac{m \pi}{c}k\right)\right|^2\,\mathrm{d}t
=&
\left(\frac{c}{m\pi}\right)^2\int_0^1\mathbf f(\xi)^*\mathcal B_m(\xi)\mathbf f(\xi)\,\mathrm{d}\xi.
\geq \kappa\left(\frac{c }{m\pi}\right)^2\int_{\tilde E}\|\mathbf f(\xi)\|^2\,\mathrm{d}\xi\\
=&\frac{c\kappa }{2m\pi^2}\int_{E}\|\widehat{f}(\xi)\|^2\,\mathrm{d}\xi
\end{split}
\end{equation}
where $E=\dst\left(\frac{2c}{m}(\tilde E+\Z)\right)\cap[-c,c]$.
\end {remark}

In the following example, we offer some numerics. To simplify the  computations, we represent $\mathcal B_m(\xi)$ in \eqref{matrB} as a Pick matrix (see, e.g., \cite{BT17, FO01}). For $\xi\in [-c,c)$, we write 
$\widehat \phi (\xi )  = e^{-\psi (\xi )},$ so that $\psi \geq 0$ and
$\psi (0)= 0,$ and  obtain for $j, k = 0,\ldots, m-1$,
$$
(\mathcal B_m)_{jk}(\xi ) = 
\int _0^1 \widehat \phi^t\left(\frac{2c}{m}(\xi +j^\prime)\right)\, \widehat  \phi^t
\left(\frac{2c}{m}(\xi +k^\prime)\right)   \, \mathrm{d}t
$$
where the indices  $j^\prime,k^\prime$ are in the set 
\begin{equation}
I_\xi=\left\{ n\in \ZZ :\frac{\xi + n}m \in [-1/2, 1/2)\right\},
\label{eq:ixi}
\end{equation} 
$m$ divides $|j-j^\prime|$ and $|k-k^\prime|$,
 and $j$, $k$, and $\xi$ are not $0$ simultaneously. Thus
\begin{equation}\label{Pickrep}
\begin{split}
(\mathcal B_m)_{jk}(\xi ) & = 
\int _0^1\ e^{-t\left(\psi \left(\frac{2c}{m}(\xi +j^\prime)\right) 
+ \psi\left(\frac{2c}{m}(\xi +k^\prime)\right)\right)} \,\mathrm{d}t \\
&= \left(\psi\left(\frac{2c}{m}(\xi +j^\prime)\right) + \psi\left(\frac{2c}{m}(\xi +k^\prime)\right)\right)\inv
 \,\left(1-e^{-\left(\psi\left(\frac{2c}{m}(\xi +j^\prime)\right) + \psi\left(\frac{2c}{m}(\xi +k^\prime)\right)\right)}  \right)
\end{split}
\end{equation}
Observe that $(\mathcal B_m)_{00}(0) = 1$.

\begin{example}\label{expic1}
Here, we choose $\phi$ to be the Gaussian function, i.e., 
\[
\widehat\phi(\xi) = \widehat\phi_1(\xi) = e^{-{{\sigma^2} \xi^2}}
\]
for various values of $\sigma \not= 0$.
Hence, $\psi(\xi) = \sigma^2\xi^2$, and we get 
\[
  (\mathcal B_m)_{jk}(\xi ) = \frac{m^2}{4c^2\sigma^2}\cdot\frac{1-e^{  - \left(\frac{\sigma^2}{m^2} 
\left({(\xi +j^\prime)^2} + 
                                 {(\xi +k^\prime)^2}\right)\right)}}{(\xi+j^\prime)^2+(\xi+k^\prime)^2}
\]
with $j^\prime$, $k^\prime$,  and $(\mathcal B_m)_{00}(0)$  as above.

In Figure \ref{FIG:variant sigma}, we show the condition numbers of the matrices $\mathcal B_m(\xi)$  with $\xi=0.45$, $c=1/2$, $m\in\{2,3,5\}$, and $\sigma$ varying from $1$ to $200$. 
\end{example}

{
\begin{figure}[ht]
	\centering
	\begin{subfigure}[b]{0.32\linewidth}%
		\includegraphics[width=\textwidth]{./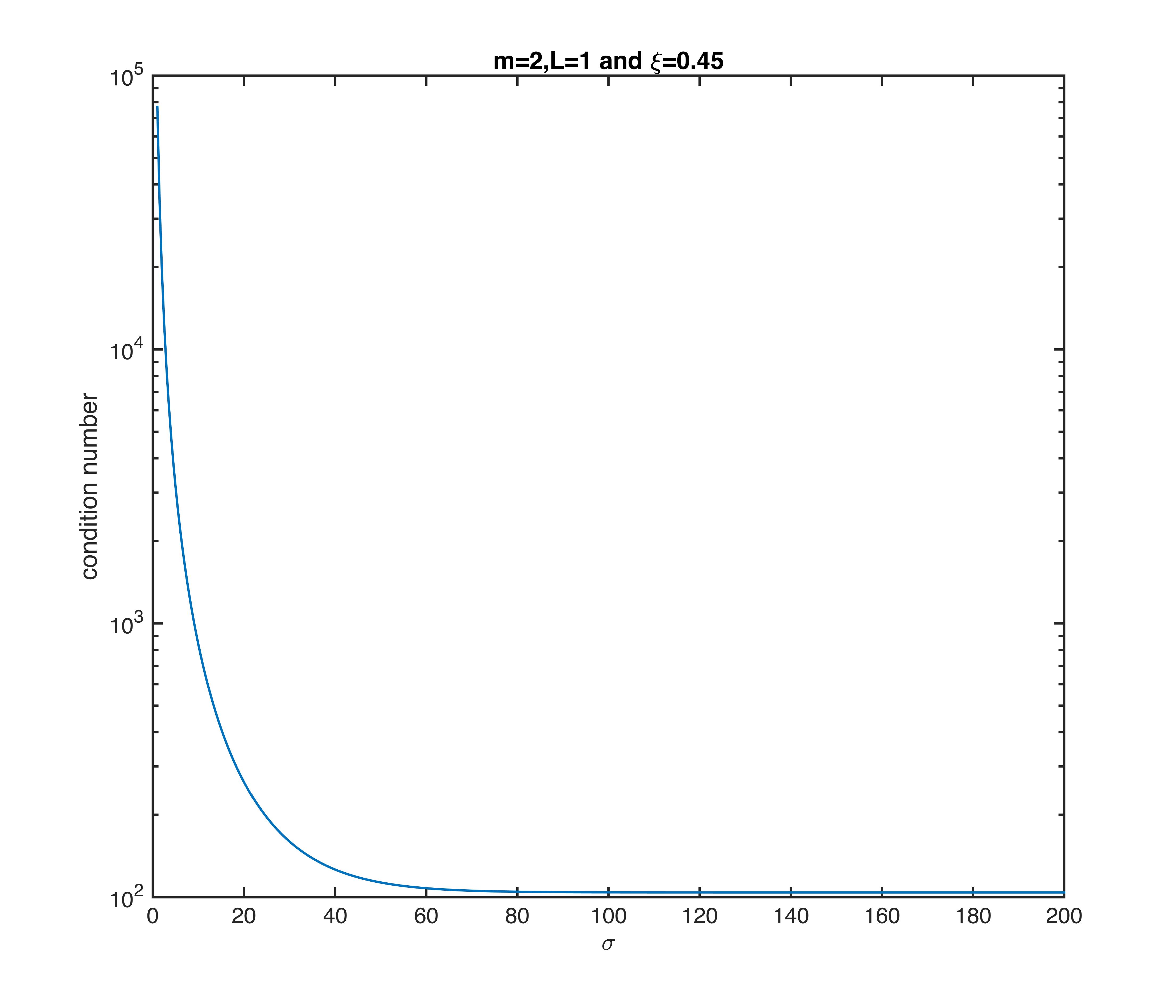}
		\caption{}
		\label{FIG:m2l1s45}
	\end{subfigure}
	\begin{subfigure}[b]{0.32\linewidth}
		\includegraphics[width=\textwidth]{./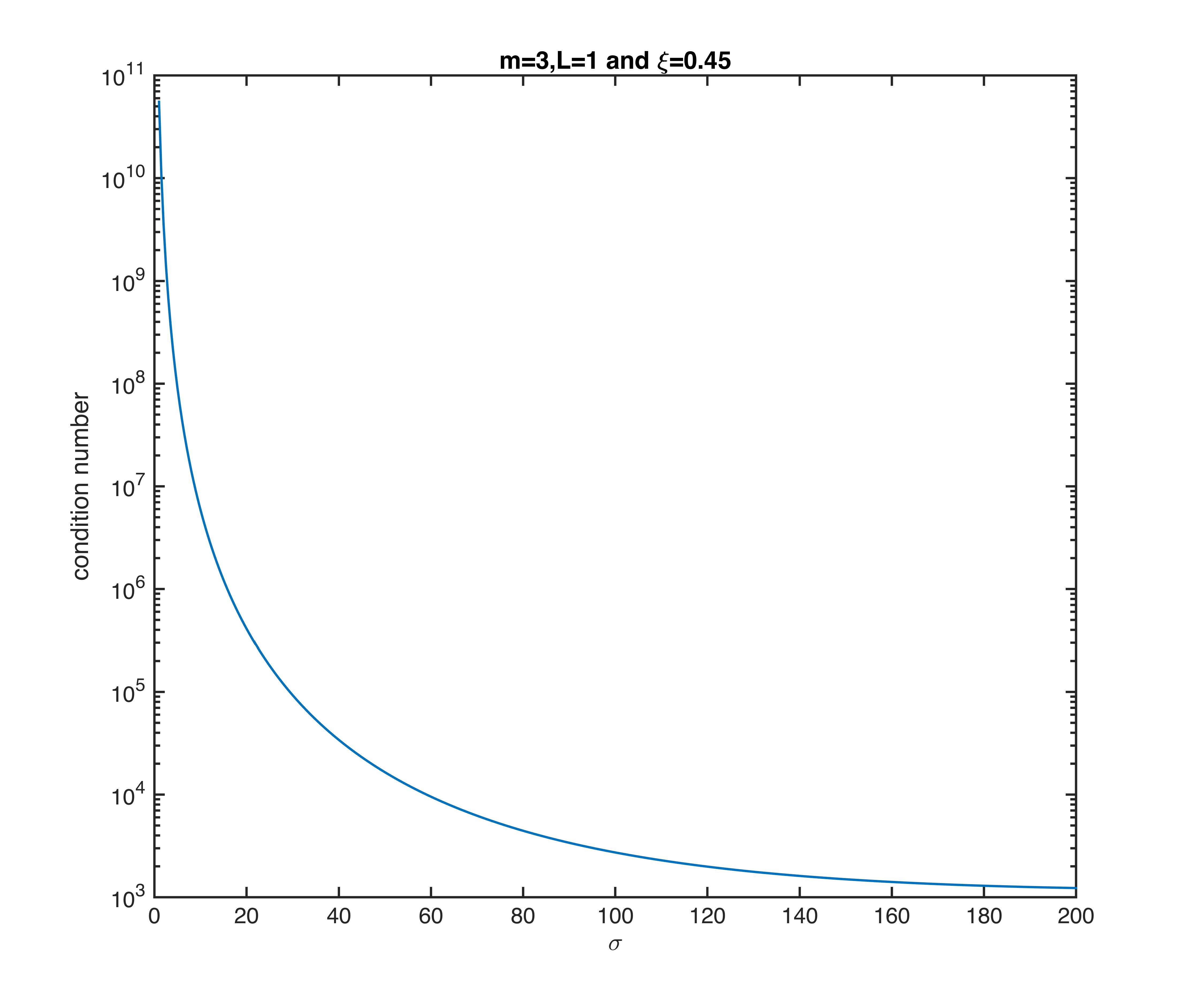}
		\caption{}
		\label{FIG:m3l1s45}
	\end{subfigure}
\begin{subfigure}[b]{0.32\linewidth}
	\includegraphics[width=\textwidth]{./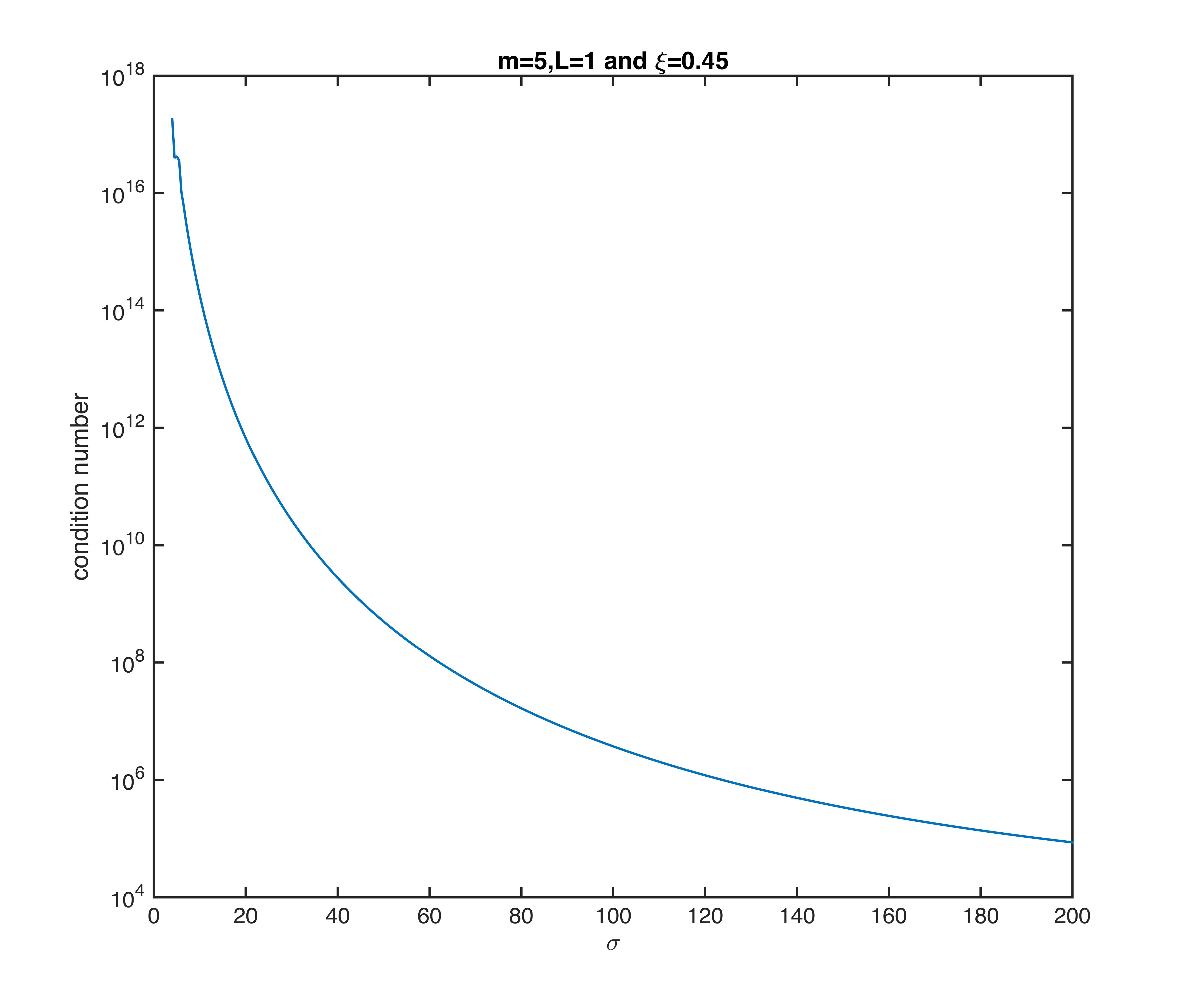}
	\caption{}
	\label{FIG:m5l1s45}
\end{subfigure}
	\caption{Condition numbers of $\mathcal B_m(\xi)$ for $m \in\{2,3,5\}$, $c=1/2$, $\xi=0.45$, and $\sigma\in[1,200]$.}
	\label{FIG:variant sigma} 
\end{figure}

In Figure \ref{FIG:variant xi}, we also show the condition numbers of the matrices $\mathcal B_m(\xi)$. This time, however, still $c=1/2$, the parameter $\sigma$ is fixed to be $200$, whereas the point $\xi$ is allowed to vary from $0.35$ to $0.49$. We still have $m\in\{2,3,5\}$.

\begin{figure}[ht]
	\centering
	\begin{subfigure}[b]{0.32\linewidth}%
		\includegraphics[width=\textwidth]{./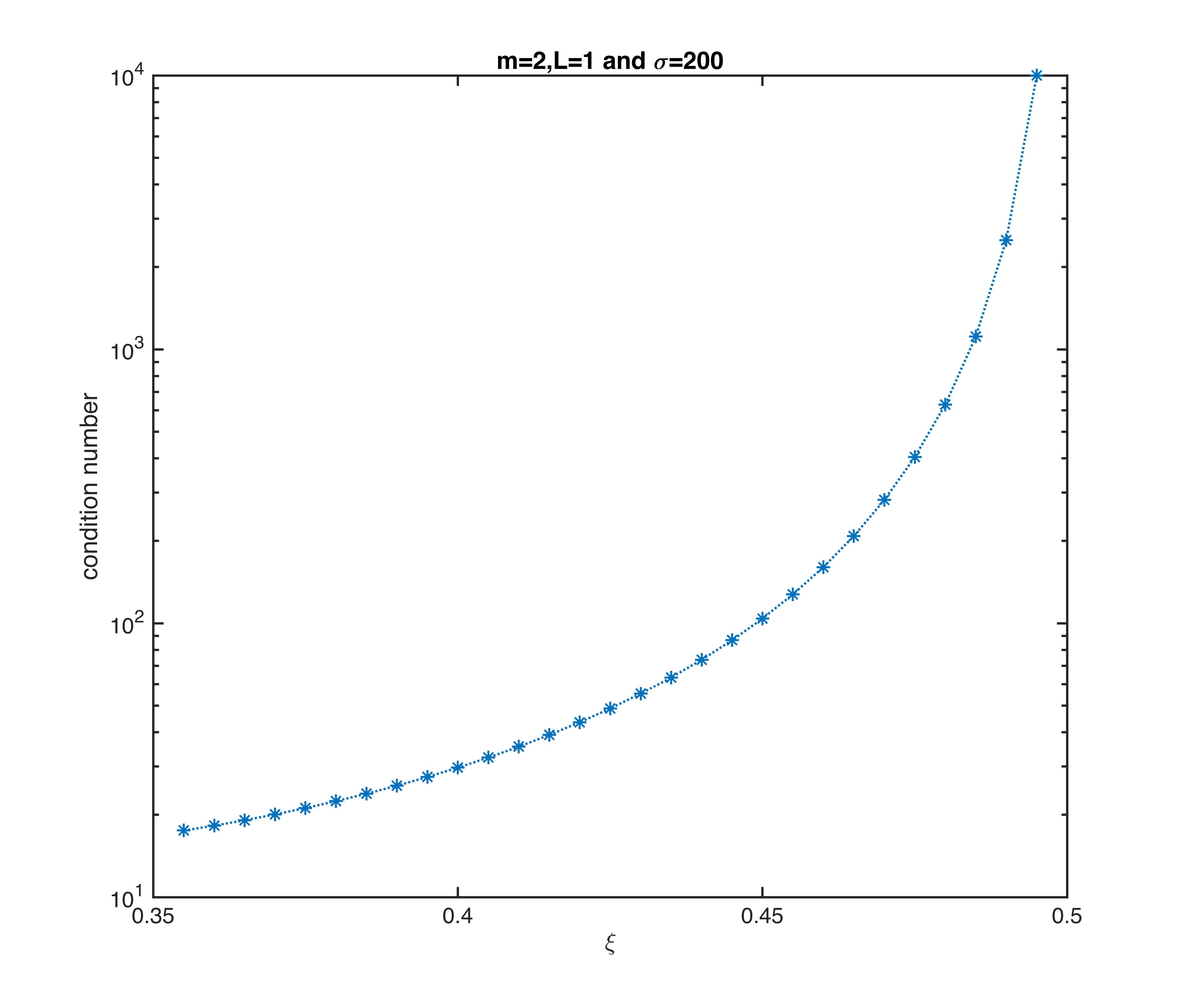}
		\caption{}
		\label{FIG:m2l1sig200}
	\end{subfigure}
	\begin{subfigure}[b]{0.32\linewidth}
		\includegraphics[width=\textwidth]{./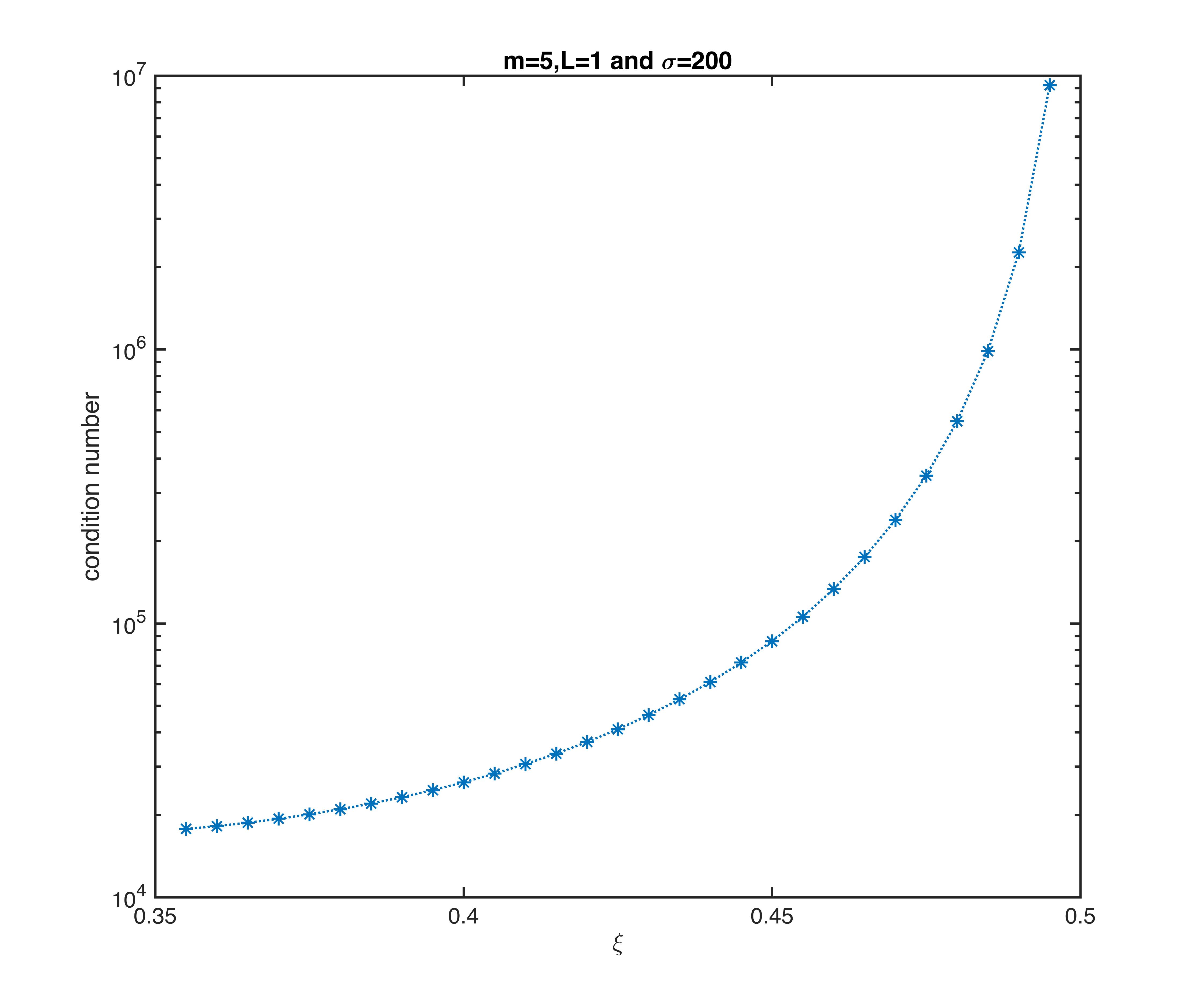}
		\caption{}
		\label{FIG:m3l1sig200}
	\end{subfigure}
	\begin{subfigure}[b]{0.32\linewidth}
          \includegraphics[width=\textwidth]{./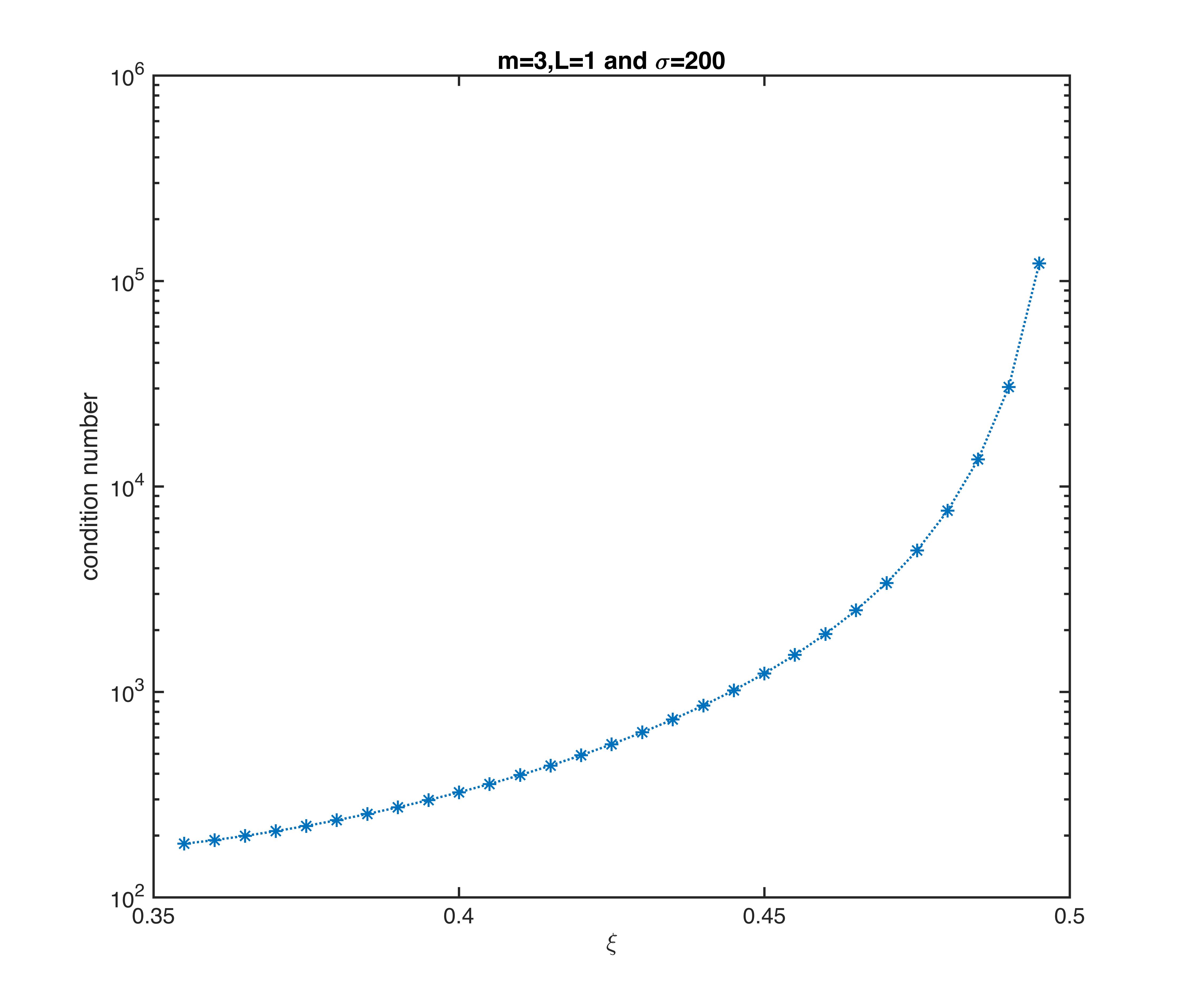}
		\caption{}
		\label{FIG:m5l1sig200}
	\end{subfigure}
	\caption{Condition numbers of $\mathcal B_m(\xi)$ for $m \in\{2,3,5\}$, $c=1/2$, $\sigma=200$, and $\xi\in[0.35,0.49]$.}\label{FIG:variant xi} 
\end{figure}
}

\subsection{Estimating the minimal eigenvalue of the sampled diffusion matrix}
In this subsection, we use 
 Vandermonde matrices
to obtain a lower estimate for the eigenvalue $\lambda^{(m)}_{\min}(\xi)$ of the matrices $\mathcal B_m(\xi)$ in \eqref{matrB}. We also present an upper estimate for  $\lambda^{(m)}_{\min}(\xi)$, which follows from the general theory of Pick matrices \cite{BT17, FO01}.

We begin with the following auxiliary result.
\begin{lemma}\label{lemV}
Let $v_0, v_1, \ldots v_{m-1}$ be $m$ distinct non-zero real numbers and let
$\mathbf{v}=(v_0,\ldots,v_{m-1})$. For $k\in\N$, define a function $\Psi_k:\R\to\R$ by
 $\Psi_k(t)=\tfrac{1-t^2}{1-t^{2/k}}$ if $t\neq1$ and
$\psi_k(1)=k$.
For $j=0,\ldots,m-1$, define
$$
\sigma_j^2=\sum_{k=0}^{m-1}v_j^{2k}= \Psi_m(v_j^m) = 
\begin{cases}m&\mbox{if }v_j=1\\
\frac{1-v_j^{2m}}{1-v_j^2}&\mbox{otherwise}\end{cases}.
$$
Let $\sigma=\left(\sum_{j=0}^{m-1}\sigma_j^2\right)^{1/2}$, $\gamma_- = \min_{j} |v_j| >0$, $\gamma_+=\max_{j} |v_j|$  and let
\[
\alpha= \left(\frac{m-1}{\sigma^2}\right)^{\frac{m-1}{2}}\prod_{0\leq j< k\leq m-1}|v_j-v_k|.
\]
For $N\in \mathbb N$, let $W_N$ be the $(mN)\times m$ Vandermonde matrix associated to 
$\mathbf{v}_N = (v_0^{\frac{1}{N}}, v_1^{\frac{1}{N}}, \ldots v_{m-1}^{\frac{1}{N}})$,  i.e.,
$$
W_N=\left[v_j^{\frac{i-1}{N}}\right]_{1\leq i\leq mN,0\leq j\leq m-1}.
$$
Then for each $x\in\CC^m$, we have
\[
\alpha^2 \Psi_N(\gamma_-)\|x\|^2 \le \|W_N x\|^2 \le 
 \sigma^2\Psi_N(\gamma_+)\|x\|^2.
\]
\end{lemma}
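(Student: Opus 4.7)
The plan is to split $W_N$ into $N$ vertical blocks of size $m\times m$, each obtained by multiplying the classical Vandermonde matrix $V=[v_j^q]_{0\le q,j\le m-1}$ on the right by a diagonal matrix. The two-sided estimate then reduces to (i) the Frobenius norm and smallest singular value of $V$, and (ii) the monotonicity of $\Psi_N$ on $[0,\infty)$.

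To set this up, write the row index as $i-1=qN+r$ with $0\le q\le m-1$ and $0\le r\le N-1$; then the $(i,j)$ entry of $W_N$ equals $v_j^q\cdot v_j^{r/N}$ (using a fixed branch of the $N$-th root when $v_j<0$). Setting $D_r=\mathrm{diag}(v_j^{r/N})_{0\le j\le m-1}$, the rows of $W_N$ with fixed $r$ form exactly $VD_r$; after a row permutation,
$$
\|W_Nx\|^2=\sum_{r=0}^{N-1}\|VD_rx\|^2.
$$
Summing the geometric series gives $\Psi_N(t)=\sum_{k=0}^{N-1}t^{2k/N}$, which is nonnegative and nondecreasing on $[0,\infty)$. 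In particular,
$$
\sum_{r=0}^{N-1}\|D_rx\|^2=\sum_{j=0}^{m-1}|x_j|^2\sum_{r=0}^{N-1}|v_j|^{2r/N}=\sum_{j=0}^{m-1}\Psi_N(|v_j|)|x_j|^2.
$$

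For the upper bound I sandwich each block using $\|VD_rx\|^2\le\|V\|_{\mathrm{op}}^2\|D_rx\|^2\le\|V\|_F^2\|D_rx\|^2$, where $\|V\|_F^2=\sum_j\sigma_j^2=\sigma^2$ is immediate from the definition of $\sigma_j$. Summing over $r$ and invoking monotonicity of $\Psi_N$ yields $\|W_Nx\|^2\le\sigma^2\Psi_N(\gamma_+)\|x\|^2$. The analogous lower estimate $\|VD_rx\|^2\ge\sigma_{\min}(V)^2\|D_rx\|^2$ combined with $\Psi_N(|v_j|)\ge\Psi_N(\gamma_-)$ gives $\|W_Nx\|^2\ge\sigma_{\min}(V)^2\Psi_N(\gamma_-)\|x\|^2$.

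The remaining step, and the main obstacle, is showing $\sigma_{\min}(V)^2\ge\alpha^2$. Let $\mu_1\ge\cdots\ge\mu_m=\sigma_{\min}(V)$ denote the singular values of $V$. The SVD identity gives $\prod_{i=1}^m\mu_i^2=|\det V|^2=\prod_{0\le j<k\le m-1}(v_j-v_k)^2$, hence
$$
\sigma_{\min}(V)^2=\frac{\prod_{j<k}(v_j-v_k)^2}{\prod_{i=1}^{m-1}\mu_i^2}.
$$
Applying AM--GM to $\mu_1^2,\ldots,\mu_{m-1}^2$ and using $\sum_{i=1}^m\mu_i^2=\|V\|_F^2=\sigma^2$ gives $\prod_{i=1}^{m-1}\mu_i^2\le(\sigma^2/(m-1))^{m-1}$, and substituting produces exactly $\sigma_{\min}(V)^2\ge((m-1)/\sigma^2)^{m-1}\prod_{j<k}(v_j-v_k)^2=\alpha^2$. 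The subtlety is that AM--GM must be applied to the $m-1$ non-minimal singular values combined with the Frobenius-trace identity to generate precisely the constant $(m-1)^{m-1}$ appearing in $\alpha$.
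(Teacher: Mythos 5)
Your proof is correct and follows essentially the same route as the paper: the identical block decomposition $\|W_Nx\|^2=\sum_{r=0}^{N-1}\|VD_rx\|^2$, the Frobenius-norm upper bound, and the lower bound via $\sigma_{\min}(V)$ together with the geometric sum defining $\Psi_N$. The only difference is that you re-derive the singular-value inequality $\sigma_{\min}(V)\ge\bigl(\tfrac{m-1}{\|V\|_F^2}\bigr)^{(m-1)/2}|\det V|$ from the determinant/trace identities and AM--GM, whereas the paper simply cites it from Yu--Gu; your derivation of that step is also correct.
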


\begin{proof}
let $V$ be the $m\times m$ Vandermonde matrix associated to $\mathbf{v}$:
$$
V=[v_j^{i}]_{0\leq i\leq m-1,0\leq j\leq m-1}.
$$
Note that the Frobenius norm of $V$ and its determinant are given by
\[
\norm{V}_F=\sigma \quad\mbox{and}\quad
|\det V|=\prod_{0\leq j< k\leq m-1}|v_j-v_k|.
\]
Recall from \cite{YG97} an estimate for the minimal singular value of an $m\times m$ matrix $A$: 
\begin{equation}\label{minest}
\sigma_{\min}(A) \ge \left(\frac{m-1}{\|A\|^2_F}\right)^{(m-1)/2}|\det A|.
\end{equation}
Specifying this to $V$ we get $\sigma_{\min}(V)\geq\alpha$. As $\norm{V}\leq\norm{V}_F$,
it follows that, for all $x\in \CC^m$,
\begin{equation}
\alpha^2\|x\|^2\leq \|Vx\|^2\leq \sigma^2\|x\|^2.
\label{eq:frob}
\end{equation}

Let $D_N$ be the diagonal matrix with $\mathbf v_{N}$ on the main diagonal. Since
\[
\|W_N x\|^2 = \langle W_N^* W_N x, x\rangle = \sum_{\ell=0}^{N-1} \langle (D_N^\ell)^* V^*VD_N^\ell x, x\rangle = \sum_{\ell=0}^{N-1}\|VD_N^\ell x\|^2, 
\]
we deduce from \eqref{eq:frob} that
\[
\sum_{\ell=0}^{N-1}\alpha^2 \|D_N^\ell x\|^2\le \|W_N x\|^2 \le
\sum_{\ell=0}^{N-1}\sigma^2 \|D_N^\ell x\|^2.
\]
Moreover, we have $\gamma_-^{\frac{2\ell}N}\|x\|^2\le \|D_N^\ell x\|^2 \le  \gamma_+^{\frac{2\ell}N}\|x\|^2$ by definition of $D_N$. The conclusion now follows by summing the two geometric sequences. 
\end{proof}

Note that the function $\Psi_N$ is increasing on $(0,+\infty)$ and that, for $t\neq1$, $t>0$
\begin{equation}\label{limpsi}
\lim_{N\to\infty}\frac{1}{N}\Psi_N(t)=\frac{1-t^2}{\lim_{N\to\infty}N(1-e^{2\ln t/N})}=\abs{\frac{1-t^2}{2\ln t}}.
\end{equation}

\begin{coro}\label{corV}
With the notation of Lemma \ref{lemV}, assume further that $0<\nu\leq v_j\leq 1$ and $m\geq 2$.
Let

\begin{equation}\label{alphat}
\widetilde\alpha=e^{-1/2}m^{-\frac{m-1}{2}}\prod_{0\leq j< k\leq m-1}|v_j-v_k|.
\end{equation}

Then for each $x\in\CC^m$, we have
\[
\widetilde\alpha^2 \Psi_N(\nu)\|x\|^2 \le \|W_N x\|^2 \le 
m^2N\|x\|^2.
\]
\end{coro}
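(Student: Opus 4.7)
The plan is to derive the corollary directly from Lemma \ref{lemV} by specializing the bounds under the additional hypotheses $0<\nu\leq v_j\leq 1$ and $m\geq 2$. The two inequalities to prove correspond to simplifying the constants $\alpha$, $\sigma$, $\gamma_-$, $\gamma_+$ of Lemma \ref{lemV} into the cleaner quantities appearing in the corollary.

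For the upper bound, I would use that all $v_j \in (0,1]$, so $\gamma_+=\max_j|v_j|\leq 1$, and since $\Psi_N$ is increasing on $(0,\infty)$, $\Psi_N(\gamma_+)\leq \Psi_N(1)=N$. Moreover each geometric sum $\sigma_j^2=1+v_j^2+\cdots+v_j^{2(m-1)}\leq m$, so $\sigma^2=\sum_{j=0}^{m-1}\sigma_j^2\leq m^2$. Plugging these into the right-hand estimate of Lemma \ref{lemV} gives $\|W_Nx\|^2\leq \sigma^2\Psi_N(\gamma_+)\|x\|^2\leq m^2 N\|x\|^2$.

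For the lower bound, I first note that $\nu\leq \gamma_-$, so $\Psi_N(\nu)\leq \Psi_N(\gamma_-)$ by monotonicity. It therefore suffices to show $\widetilde\alpha\leq \alpha$. Using the bound $\sigma^2\leq m^2$ from the previous paragraph,
\[
\alpha=\Bigl(\frac{m-1}{\sigma^2}\Bigr)^{(m-1)/2}\prod_{j<k}|v_j-v_k|\geq \Bigl(\frac{m-1}{m^2}\Bigr)^{(m-1)/2}\prod_{j<k}|v_j-v_k|,
\]
so the desired inequality reduces to the elementary estimate $\bigl(1-\tfrac{1}{m}\bigr)^{(m-1)/2}\geq e^{-1/2}$. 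Taking logarithms, this is equivalent to $(m-1)\ln(1-1/m)\geq -1$, which follows from the standard bound $\ln(1-x)\geq -x/(1-x)$ for $0\leq x<1$ applied at $x=1/m$.

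There is no genuine obstacle here: the corollary is just a neater repackaging of Lemma \ref{lemV} tailored to the case $v_j\in(\nu,1]$, and the only slightly non-mechanical step is the scalar inequality $\bigl(1-1/m\bigr)^{(m-1)/2}\geq e^{-1/2}$, which is a one-line calculus exercise.
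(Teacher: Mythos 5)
Your proof is correct and follows essentially the same route as the paper: both derive the corollary from Lemma \ref{lemV} via the estimates $\sigma^2\leq m^2$, $\Psi_N(\gamma_+)\leq\Psi_N(1)=N$, $\Psi_N(\nu)\leq\Psi_N(\gamma_-)$, and the scalar inequality $\bigl(1-\tfrac1m\bigr)^{(m-1)/2}\geq e^{-1/2}$. The only (immaterial) difference is how that last inequality is verified — the paper shows $t\mapsto(1-1/t)^{(t-1)/2}$ is decreasing with limit $e^{-1/2}$, while you use $\ln(1-x)\geq -x/(1-x)$.
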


\begin{proof}
Indeed, $\nu\leq\gamma_-\leq\gamma_+\leq 1$ so $\Psi_N(\nu)\leq\Psi_N(\gamma_-)$
and $\Psi_N(\gamma_+)\leq\Psi_N(1)=N$.

Further, %
since $v_j\le 1$, $\sigma^2\leq m^2$.
Moreover, the derivative of $\left(\frac{t-1}{t}\right)^{(t-1)/2}=\left(1-\frac{1}{t}\right)^{(t-1)/2}$ is
$$
\frac{1}{2}\left(1-\frac{1}{t}\right)^{(t-1)/2}\left(\frac{1}{t}+\ln\left(1-\frac{1}{t}\right)\right)\leq 0
$$
for $t\geq 1$. Thus,
$$
\left(\frac{m-1}{m}\right)^{(m-1)/2}\geq\lim_{t\to+\infty}\exp\left[\frac{t-1}{2}\ln\left(1-\frac{1}{t}\right)\right]=e^{-1/2}.
$$
It follows that $\alpha$ in the statement of Lemma \ref{lemV} satisfies
$$
\alpha\geq\frac{\prod_{0\leq j< k\leq m-1}|v_j-v_k|}{\sqrt{e}m^{(m-1)/2}},
$$
and the result is established.
\end{proof}

\begin{prop}
\label{cor:Sec5}
Let $ \phi \in \Phi$. Define
$$
\newdelta(\xi)=
\prod_{0\leq j< k\leq m-1}\abs{\widehat\phi_p\left(\frac{2c}{m}(\xi+j)\right)
-\widehat\phi_p\left(\frac{2c}{m}(\xi+k)\right)}.
$$
Then, for each $x\in\CC^m$, we have
\[
\frac{1}{2e m^{m^2}}\newdelta(\xi)^2 \cdot\frac{1-\newmphi^{2/m}}{|\ln \newmphi|}\|x\|^2\le
\langle\mathcal B_m(\xi) x, x \rangle \le m\|x\|^2.
\]
\end{prop}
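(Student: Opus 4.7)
The plan is to combine a change of variables with Corollary \ref{corV} applied to a Riemann sum. Writing $v_j:=\widehat\phi_p(\tfrac{2c}{m}(\xi+j))$, the definition \eqref{matrB} gives
\[
\langle\mathcal B_m(\xi) x, x\rangle = \int_0^1\left|\sum_{j=0}^{m-1} v_j^t x_j\right|^2\,\mathrm{d}t,
\]
and $v_j\in[\newmphi,1]$ since $\widehat\phi_p$ is $2c$-periodic and $\phi\in\Phi$. Cauchy–Schwarz gives $\bigl|\sum_j v_j^t x_j\bigr|^2\le \|x\|^2\sum_j v_j^{2t}\le m\|x\|^2$, so the upper bound $\langle\mathcal B_m(\xi) x,x\rangle\le m\|x\|^2$ follows after integrating in $t\in[0,1]$.

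For the lower bound I would substitute $s=mt$ and $u_j:=v_j^{1/m}\in[\newmphi^{1/m},1]$ to obtain
\[
\langle\mathcal B_m(\xi)x,x\rangle=\frac{1}{m}\int_0^m\left|\sum_{j=0}^{m-1}u_j^s x_j\right|^2\,\mathrm{d}s.
\]
The purpose of this change is that the Riemann sums on the right with mesh $1/N$ are exactly $\tfrac{1}{N}\|W_N' x\|^2$, where $W_N'$ is the Vandermonde matrix of Corollary \ref{corV} built from the nodes $\mathbf v'=(u_0,\ldots,u_{m-1})$. Applying that corollary with $\nu=\newmphi^{1/m}$, dividing by $mN$, and letting $N\to\infty$, while using \eqref{limpsi} to compute $\lim_{N\to\infty}\Psi_N(\newmphi^{1/m})/N = m(1-\newmphi^{2/m})/(2|\ln\newmphi|)$, yields
\[
\langle\mathcal B_m(\xi) x,x\rangle\ge \widetilde\alpha'^{\,2}\cdot\frac{1-\newmphi^{2/m}}{2|\ln\newmphi|}\|x\|^2,
\]
where $\widetilde\alpha'$ is the quantity \eqref{alphat} evaluated at the $u_j$.

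The final step is to convert $\widetilde\alpha'$, which involves the differences $|u_j-u_k|$, back into $\newdelta(\xi)$, which involves $|v_j-v_k|$. Using the telescoping identity $v_j-v_k=u_j^m-u_k^m=(u_j-u_k)\sum_{i=0}^{m-1}u_j^{m-1-i}u_k^{i}$, the geometric factor is at most $m$ because $u_j,u_k\le 1$, so $|u_j-u_k|\ge |v_j-v_k|/m$. Multiplying over $0\le j<k\le m-1$ costs a factor $m^{-m(m-1)/2}$, which combined with the prefactor $e^{-1/2}m^{-(m-1)/2}$ from \eqref{alphat} gives $\widetilde\alpha'^{\,2} \ge e^{-1}m^{-(m^2-1)}\newdelta(\xi)^2$, even slightly stronger in the power of $m$ than the stated $(2em^{m^2})^{-1}$. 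I expect the main technical subtlety to be the passage to the limit, which is made clean here by the fact that $\widetilde\alpha'$ and $\Psi_N(\newmphi^{1/m})$ are independent of $x$, so the inequality $\tfrac{1}{mN}\|W_N' x\|^2\ge \tfrac{\widetilde\alpha'^{\,2}\Psi_N(\newmphi^{1/m})}{mN}\|x\|^2$ survives the $N\to\infty$ Riemann-sum limit on both sides. The degenerate case $\newmphi=1$ is handled by continuity, interpreting $(1-\newmphi^{2/m})/|\ln\newmphi|$ as its limit $2/m$.
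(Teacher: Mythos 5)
Your proof is correct and follows essentially the same route as the paper's: both discretize the time integral as a Riemann sum governed by the Vandermonde matrix built from the $m$-th roots of the $\widehat\phi_p$ values, invoke Corollary \ref{corV} together with \eqref{limpsi}, and then trade $|u_j-u_k|$ for $|v_j-v_k|/m$ (you via factoring $u_j^m-u_k^m$, the paper via the mean value theorem). Your observation that the method actually yields $m^{m^2-1}$ in place of $m^{m^2}$ is accurate — the paper's intermediate display drops a harmless factor of $m$ when evaluating $\lim_N \Psi_N(\newmphi^{1/m})/(mN)$ — so the stated, slightly weaker bound follows a fortiori.
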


\begin{proof}
We fix $\xi$ and apply Corollary \ref{corV} to 
$v_j =\dst(\widehat\phi)_p\left(\frac{2c}{m}(\xi+j)\right)^{\frac{1}{m}}$. With $\widetilde\alpha$ given by \eqref{alphat},
$$
\widetilde\alpha=
e^{-1/2}m^{-\frac{m-1}{2}}
\prod_{0\leq j< k\leq m-1}\abs{\widehat\phi_p\left(\frac{2c}{m}(\xi+j)\right)^{1/m}
-\widehat\phi_p\left(\frac{2c}{m}(\xi+k)\right)^{1/m}},
$$
we get
$$
\widetilde\alpha^2\Psi_N(\newmphi^{1/m})\|x\|^2\leq\|W_Nx\|^2\leq m^2 N \|x\|^2.
$$

On the other hand,
$\frac{1}{mN} W_N^*W_N$
equals  the left-end $mN$-term Riemann sum for the integral defining $\mathcal B_m(\xi)$.
It follows that
\[
\langle\mathcal B_m(\xi) x, x \rangle = \lim_{N\to \infty}\frac{1}{mN}
\langle W_N^*W_Nx, x \rangle=\lim_{N\to \infty}\frac{1}{mN}
\|W_Nx\|^2.
\]
Using \eqref{limpsi}, we get
\[
\widetilde\alpha^2\frac{1-\newmphi^{2/m}}{2m|\ln \newmphi|}\|x\|^2\leq 
\langle\mathcal B_m(\xi) x, x \rangle\leq m\|x\|^2.
\]
Finally, note that if $0<a,b\leq 1$, using the mean value theorem, there is an $\eta\in(a,b)$ such that
$$
|a^{1/m}-b^{1/m}|=\frac{1}{m}|a-b|\eta^{-1+1/m}\geq \frac{1}{m}|a-b|.
$$
Therefore
\begin{eqnarray*}
\widetilde\alpha&=&
e^{-1/2}m^{-\frac{m-1}{2}}
\prod_{0\leq j< k\leq m-1}\abs{\widehat\phi_p\left(\frac{2c}{m}(\xi+j)\right)^{1/m}
-\widehat\phi_p\left(\frac{2c}{m}(\xi+k)\right)^{1/m}}\\
&\geq&e^{-1/2}m^{-\frac{m-1}{2}-\frac{m(m-1)}{2}}\Delta(\xi)
=e^{-1/2}m^{-\frac{m^2-1}{2}}\Delta(\xi)
\end{eqnarray*}
establishing the postulated estimates.
\end{proof}

For an upper estimate of the minimal eigenvalue $\lambda^{(m)}_{\min}(\xi)$ we use the estimates of the
singular values of Pick matrices by Beckerman-Townsend \cite{BT17}.
For $p_j\in \CC $, $ j = 1 , \dots , m$,  and $0<a\leq x_1 < x_2 < \dots < x_m \leq
b$ let 
\begin{equation}\label{eq:add44}
(P_m)_{jk} = \frac{p_j+ p_k}{x_j+ x_k},  \qquad j,k = 1 , \dots , m,
\end{equation}
be the corresponding Pick matrix. Then the smallest singular value $s_{\min}$ of
$P_m$ is bounded above by
\begin{equation}
  \label{eq:add4}
s_{\min} \leq \min\left\{1, 4 \left[\exp \left( \frac{\pi^2}{2\ln \left(\frac{4b}{a}\right)} \right)\right]
^{-2\lfloor m/2 \rfloor}\right\} s_{\max},
\end{equation}
where $s_{\max}$ is the largest singular value.

If $(\widetilde{ P}_m)_{jk} = \frac{1-c_jc_k}{x_j+x_k}$, %
then $\widetilde{P}_m$ is related to a Pick matrix of the form
\eqref{eq:add44} via the diagonal matrix $D = \mathrm{diag}
(1+c_j)$:
$$
\frac{1}{2} D \inv \widetilde{P}_m D \inv = P_m
$$
with $p_j = \frac{1-c_j}{1+c_j}$, $c_j\neq-1$.

In our case, see \eqref{Pickrep}, $x_j= \psi \left(\frac{2c}{m}(\xi +j)\right)$ and $c_j = e^{-x_j} \in
(0,1]$, so $\mathrm{Id}\leq D \leq 2\mathrm{Id}$ and  the
singular values of $\cB _m (\xi )$  and the corresponding Pick matrix  $P_m$ 
differ at most 
by a factor $4$. Therefore, \eqref{eq:add4} holds with  
$a (\xi ) = \min \left\{\psi \left(\frac{2c}{m}(\xi +k)\right) : k\in I_\xi \right\}$ and  
$b (\xi ) = \max \left\{\psi \left(\frac{2c}{m}(\xi +k)\right) : k\in I_\xi \right\}$,
$I_\xi$ defined in \eqref{eq:ixi},
and an additional factor $4$ provided that $a(\xi) \neq 0$.

For  our main  examples, we have $\psi (\xi ) = |\xi |^\alpha $, $\alpha> 0$. This yields 
$$
b(\xi ) \leq c^\alpha
\quad\mbox{and}\quad
a(\xi ) = \min \left\{\left|\frac{2c}{m}(\xi -k)\right|^\alpha : 
\frac{2c}{m}|\xi - k|\leq c,\ |\xi|\leq\frac{1}{2}\right\} = \left(\frac{2c}{m}|\xi|\right)^\alpha
$$
So for the smallest singular value of $\cB _m(\xi )$ we obtain the
estimate
\begin{equation}  \label{eq:add5}
\begin{split}
\lambda^{(m)}_{\min}(\xi)&\leq 4^2\left[ \exp \left( \frac{\pi^2}{2\ln
  4\left(\frac{m}{2|\xi|}\right)^\alpha} \right) \right]
         ^{-2\lfloor m/2 \rfloor} m\\
       &\leq 16 m \, \exp \left(-\frac{(m-1)\pi ^2}{\ln 16 + 2\alpha
         \ln\frac{m}{2|\xi|}} \right). 
\end{split}
\end{equation}

Observe that the Beckerman-Townsend estimate \eqref{eq:add4}   holds for \emph{all}
Pick matrices with the same values for $a=\min x_j$ and $b=\max x_j$
and is completely 
independent of the particular distribution of the $x_j$. Regardless, it shows that the condition
number grows nearly exponentially with $m$, establishing limitations on how well the space-time trade-off can work numerically.
Of course, the condition number
  may be much
worse if two values $x_j$ and $x_{j+1}$ are
close together (if $x_j = x_{j+1}$, then $P_m$ is 
singular). Thus,  \eqref{eq:add5} is an optimistic upper estimate for
$\lambda^{(m)}_{\min}(\xi)$. By comparison, our lower estimate in Proposition \ref{cor:Sec5} depends
crucially on the distribution of the parameters $x_j$ and is much harder to obtain. It does, however, establish an upper bound on the 
condition number and, thus, shows that the space-time trade-off may be useful. A precise result is formulated in the following subsection.

\subsection{Partial recoverability}

 \begin{theorem}\label{sufconT}
Let $ \phi \in \Phi$, $m\geq 2$ an integer and $\widetilde E\subseteq I = [0,1] $ be a compact set.
Assume that there exists $\delta>0$ such that,
for every $0\leq j<k\leq m-1$ and %
every $\xi \in\widetilde E$
\[
\abs{\widehat\phi_p\left(\frac{2c}{m}(\xi+ j)\right)-\widehat\phi_p\left(\frac{2c}{m}(\xi+k)\right)}\geq\delta.
\]
Let $E=\dst\left(\frac{2c}{m}(\tilde E+\Z)\right)\cap[-c,c]$. Then for any $f\in PW_c$, the function 
$\widehat f\un_{ E}$ can be recovered from the samples
\begin{equation}\label{sampM}
\mathcal M = \left\{f_t\left(\frac{m \pi}{c}k\right): k \in \mathbb{Z}, 0 \leq t \leq 1\right\}
\end{equation}
in a stable way. Moreover, we have
\begin{equation}\label{fest}
A\norm{\widehat f\un_{ E}}^2%
 \le \int_0^1 \sum_{k\in\ZZ}\left|f_t\left(\frac{m \pi}{c}k\right)\right|^2\,\mathrm{d}t \le\frac{c}{2\pi^2}\|\widehat f\|^2,
\end{equation}
where
	\[A=\frac{c }{4e\pi^2}\frac{\delta^{m(m-1)}}{m^{1+m^2}}
	\frac{\newmphi^{2/m}-1}{\ln \newmphi}.
	\]
\end{theorem}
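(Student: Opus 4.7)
The plan is to combine Lemma~\ref{lemma_calc} with the spectral bounds of Proposition~\ref{cor:Sec5} and the isometric identity recorded in \eqref{eq_note}. By Lemma~\ref{lemma_calc}, the sampling integral equals $\left(\frac{c}{m\pi}\right)^2\int_0^1\mathbf f(\xi)^*\mathcal B_m(\xi)\mathbf f(\xi)\,\mathrm d\xi$, so the task reduces to sandwiching the quadratic form $\mathbf f(\xi)^*\mathcal B_m(\xi)\mathbf f(\xi)$ via the two-sided bound on $\langle\mathcal B_m(\xi)x,x\rangle$ supplied by Proposition~\ref{cor:Sec5}, and then translating the resulting integral estimates back into norms of $\widehat f$.

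For the upper bound in \eqref{fest}, I would use the pointwise estimate $\langle\mathcal B_m(\xi)x,x\rangle\le m\|x\|^2$ together with the global isometry $\int_0^1\|\mathbf f(\xi)\|^2\,\mathrm d\xi=\frac{m}{2c}\|\widehat f\|^2$ from \eqref{eq_note}. This immediately produces the prefactor $\left(\frac{c}{m\pi}\right)^2\cdot m\cdot\frac{m}{2c}=\frac{c}{2\pi^2}$, and the upper bound of \eqref{fest} follows for every $f\in PW_c$ (not just for $f$ with $\supp\widehat f\subseteq E$).

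For the lower bound, I would first restrict the integration in $\xi$ to $\widetilde E$. The hypothesis that each of the $m(m-1)/2$ factors of $\newdelta(\xi)$ is at least $\delta$ on $\widetilde E$ yields $\newdelta(\xi)^2\ge\delta^{m(m-1)}$ there, so Proposition~\ref{cor:Sec5} specializes on $\widetilde E$ to $\langle\mathcal B_m(\xi)x,x\rangle\ge\frac{\delta^{m(m-1)}}{2em^{m^2}}\cdot\frac{\newmphi^{2/m}-1}{\ln\newmphi}\|x\|^2$ (the factor $(1-\newmphi^{2/m})/|\ln\newmphi|$ equals $(\newmphi^{2/m}-1)/\ln\newmphi$ since both numerator and denominator are non-positive). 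What remains is the localized counterpart of \eqref{eq_note}: I must show that $\int_{\widetilde E}\|\mathbf f(\xi)\|^2\,\mathrm d\xi=\frac{m}{2c}\|\widehat f\un_E\|^2$.

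This localized identity is the main bookkeeping obstacle. To establish it, I would expand the norm using \eqref{vectorization} and change variables $u=\frac{2c}{m}(\xi+j)$ in each of the $m$ summands, producing $\frac{m}{2c}$ times the integral of $|(\widehat f)_p(u)|^2$ over the set $\bigcup_{j=0}^{m-1}\frac{2c}{m}(\widetilde E+j)\subseteq[0,2c]$. This set is a fundamental domain of length $2c$ for the $\frac{2c}{m}$-periodic set $\frac{2c}{m}(\widetilde E+\Z)$, so the $2c$-periodicity of $(\widehat f)_p$ lets me shift it back to $\frac{2c}{m}(\widetilde E+\Z)\cap[-c,c]=E$; and on $[-c,c]$ the periodization $(\widehat f)_p$ coincides with $\widehat f$, yielding $\|\widehat f\un_E\|^2$. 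Multiplying by the prefactor $\left(\frac{c}{m\pi}\right)^2$ and collecting constants would then give precisely $A=\frac{c}{4e\pi^2}\cdot\frac{\delta^{m(m-1)}}{m^{1+m^2}}\cdot\frac{\newmphi^{2/m}-1}{\ln\newmphi}$, completing the proof.
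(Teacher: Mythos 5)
Your proposal is correct and follows essentially the same route as the paper: Lemma~\ref{lemma_calc} to pass to the quadratic form of $\mathcal B_m(\xi)$, Proposition~\ref{cor:Sec5} with $\newdelta(\xi)^2\ge\delta^{m(m-1)}$ for the two-sided spectral bounds, and the localized isometry $\int_{\widetilde E}\|\mathbf f(\xi)\|^2\,\mathrm d\xi=\frac{m}{2c}\|\widehat f\un_E\|^2$, which is exactly the computation \eqref{eq:restricted} recorded in Remark~\ref{vecvsper}. The constants you collect match the stated $A$ and $\frac{c}{2\pi^2}$, so there is nothing to add.
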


\begin{proof} Recall from \eqref{eq:lister} that we need to estimate
\[
\int_0^1 \sum_{k\in\ZZ}\left|f_t\left(\frac{m \pi}{c}k\right)\right|^2\,\mathrm{d}t=\left(\frac{c}{m\pi}\right)^2\int_0^1\mathbf f(\xi)^*\mathcal B_m(\xi)\mathbf f(\xi)\,\mathrm{d}\xi.
\]
The upper bound follows directly from Proposition \ref{cor:Sec5}, and \eqref{eq_note}:
$$
\int_0^1\mathbf f(\xi)^*\mathcal B_m(\xi)\mathbf f(\xi)\,\mathrm{d}\xi
\leq m\int_0^1\|\mathbf f(\xi)\|^2\,\mathrm{d}\xi
=\frac{m^2}{2c}\norm{\widehat f}^2.
$$

Let us now prove the lower bound using \eqref{eq:restricted}.
First, $\Delta_m(\xi)\geq \delta^{\frac{m(m-1)}{2}}$.
It follows from Proposition \ref{cor:Sec5} that, if $\xi \in \widetilde E$ then
$$
\mathbf f(\xi)^*\mathcal B_m(\xi)\mathbf f(\xi)
\geq \frac{\newmphi^{2/m}-1}{2e m^{m^2}\ln \newmphi} \delta^{m(m-1)}
\|\mathbf f(\xi)\|^2.
$$
Taking $\dst\kappa=\frac{\newmphi^{2/m}-1}{2e m^{m^2}\ln \newmphi} \delta^{m(m-1)}$
in \eqref{eq:restricted} gives the result.
\end{proof}

\begin{remark}
The condition number implied by the above theorem is not the best possible one can obtain through this method.
For instance, a better estimate for the
$\sigma_{\min}$ of a Vandermonde matrix may be used in place of  \eqref{minest}.

However, the method will always lead to a deteriorating estimate of the condition number as $m$ increases. This follows from the Beckerman-Townsend
estimate \eqref{eq:add4} we discussed in the previous subsection.

\end{remark}

\begin{coro}
\label{coro:main}
Assume that $\phi\in\Phi$, $\widehat\phi$ is even and strictly decreasing on $\R_+$, and
$m\geq 2$ is an integer. 
Given $\eta \in (0,\frac1{4})$, let $\widetilde E = [-\frac 1 2 + \eta, -\eta]\cup[\eta, \frac 1 2 -\eta]$ and 
$E=\dst\left(\frac{2c}{m}(\tilde E+\Z)\right)\cap[-c,c]$.
Then there exists
$A>0$ such that, for any $f\in PW_c$, 
\[
A\norm{\widehat f\un_{ E}}^2 \le 
\int_0^1 \sum_{k\in\ZZ}\left|f_t\left(\frac{m \pi}{c}k\right)\right|^2\,\mathrm{d}t \le \|\widehat f\|^2.
\]
\end{coro}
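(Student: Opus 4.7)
The plan is to derive the corollary from Theorem~\ref{sufconT}. The upper frame bound is furnished by that theorem (modulo the factor $c/(2\pi^2)$, which may be absorbed into the constant or handled by noting it comes from the universal estimate $\mathcal B_m(\xi) \le m\,\mathrm{Id}$ in Proposition~\ref{cor:Sec5}). The substantive task is to verify the hypothesis of Theorem~\ref{sufconT}: that there exists $\delta>0$ such that for every $\xi\in\widetilde E$ and every $0\le j<k\le m-1$,
\[
\big|\widehat\phi_p(\tfrac{2c}{m}(\xi+j))-\widehat\phi_p(\tfrac{2c}{m}(\xi+k))\big|\ge\delta.
\]

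First I would unwind the $2c$-periodization. For each $\xi\in[-\tfrac12,\tfrac12]$ and $j\in\{0,\ldots,m-1\}$ there is a unique integer $\tilde j=j-k_0 m$ with $\xi+\tilde j\in[-\tfrac m2,\tfrac m2)$ such that
\[
\widehat\phi_p(\tfrac{2c}{m}(\xi+j))=\widehat\phi(\tfrac{2c}{m}(\xi+\tilde j)),
\]
and the map $j\mapsto\tilde j$ is a bijection of $\{0,\ldots,m-1\}$ onto $m$ consecutive integers. Since $\widehat\phi$ is even and strictly decreasing on $\R_+$, two of these values coincide iff $|\xi+\tilde j_1|=|\xi+\tilde j_2|$, which for $\tilde j_1\neq\tilde j_2$ is equivalent to $2\xi+\tilde j_1+\tilde j_2=0$, i.e., $2\xi\in\Z$. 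This is exactly what $\widetilde E$ is designed to avoid.

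Next I would quantify the separation. For $\xi\in\widetilde E$ one has $2\xi\in[-1+2\eta,-2\eta]\cup[2\eta,1-2\eta]$, so $\dist(2\xi,\Z)\ge 2\eta$. A short case split on the signs of $\xi+\tilde j_1$ and $\xi+\tilde j_2$ then yields
\[
\big|\,|\xi+\tilde j_1|-|\xi+\tilde j_2|\,\big|\ge 2\eta\qquad\text{for all }\tilde j_1\neq\tilde j_2,
\]
since in the same-sign case the left-hand side equals $|\tilde j_1-\tilde j_2|\ge 1\ge 2\eta$, while in the opposite-sign case it equals $|2\xi+\tilde j_1+\tilde j_2|\ge\dist(2\xi,\Z)\ge 2\eta$. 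Scaling by $\tfrac{2c}{m}$, the points $\tfrac{2c}{m}(\xi+\tilde j)$ lie in $[-c,c)$ with pairwise $|\cdot|$-spacing at least $\tfrac{4c\eta}{m}$.

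Finally, since $\widehat\phi$ restricted to $[0,c]$ is continuous and strictly decreasing, a standard compactness argument gives
\[
\delta := \min\bigl\{\widehat\phi(s_1)-\widehat\phi(s_2):0\le s_1\le s_2-\tfrac{4c\eta}{m},\ s_2\le c\bigr\}>0,
\]
and combining this with the previous step produces the required uniform lower bound. Theorem~\ref{sufconT} then yields the corollary. The main obstacle is the opposite-sign case of the separation estimate: it is this case that forces $\widetilde E$ to exclude neighborhoods of both $0$ and $\pm\tfrac12$ in $[-\tfrac12,\tfrac12]$, and without both exclusions the uniform $\delta$ collapses. The constant $\delta$ is non-constructive without more information on $\widehat\phi$, so the $A$ inherited from Theorem~\ref{sufconT} is qualitative; making it explicit (e.g., for the Gaussian case) requires a concrete modulus estimate for $\widehat\phi$ on $[0,c]$.
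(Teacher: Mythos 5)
Your proposal is correct and follows essentially the same route as the paper: unwind the periodization $\widehat\phi_p$, use evenness and strict monotonicity of $\widehat\phi$ to show the separation hypothesis of Theorem~\ref{sufconT} can only fail when $2\xi\in\Z$ (i.e.\ $\xi\in\{0,\pm\tfrac12\}$), and conclude by continuity/compactness on $\widetilde E$. Your intermediate quantitative step (pairwise spacing $\geq \tfrac{4c\eta}{m}$ of the points $\bigl|\tfrac{2c}{m}(\xi+\tilde j)\bigr|$) is in fact what the paper records separately in Lemma~\ref{Liplemma} en route to the explicit Gaussian estimates, so nothing is missing.
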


\begin{proof}
We look into the main condition of Theorem \ref{sufconT}: {\em there exists $\delta>0$ such that,
for every $0\leq j<k\leq m-1$ and every $\xi\in \widetilde E$}
\begin{equation}
\label{eq:condthm52}
\abs{\widehat\phi_p\left(\frac{2c}{m}(\xi+j)\right)-\widehat\phi_p\left(\frac{2c}{m}(\xi+k)\right)}\geq\delta.
\end{equation}
For a general $\phi\in\Phi$, the function $\widehat\phi$ is continuous and, therefore, $\widehat\phi_p$ is continuous, except possibly on $\displaystyle c+2c\Z$
where a jump discontinuity occurs if $\widehat\phi(-c)\neq\widehat\phi(c)$. Under current assumptions, however, $\widehat\phi$ is even and, therefore $\widehat\phi_p$ is continuous everywhere.

For $0\leq\ell\leq m-1$ and $\xi\in I$, we have $\displaystyle -\frac 1 {2m} \le \frac{\xi+ \ell}{m}\le1-  \frac 1 {2m} $ and 
$$
\widehat\phi_p\left(\frac{2c}{m}(\xi+ \ell)\right)
=\begin{cases}\dst\widehat\phi\left(\frac{2c}{m}(\xi+ \ell)\right)&\mbox{if }\dst
\frac{\xi+ \ell}{m}< 1/2\\
\dst\widehat\phi\left(\frac{2c}{m}(\xi+ \ell-m)\right)&\mbox{if }\dst\frac{\xi+ \ell}{m}\ge 1/2
\end{cases}.
$$

Thus, the condition  of Theorem \ref{sufconT} would be satisfied with $\widetilde E=I$ if $|\widehat\phi|$ 
were one-to-one on $I$, that is, either strictly decreasing or strictly increasing.
However, $\widehat\phi$ is even and strictly decreasing on $\R_+$, so that $\widehat\phi_p$ is continuous, strictly 
decreasing on $[0,c]$ and strictly increasing on $[-c,0]$. It follows that \eqref{eq:condthm52} may only fail in 
small intervals around the  points $\xi \in I$ 
where $\dst\widehat\phi_p\left(\frac{2c}{m}(\xi+j)\right)-\widehat\phi_p\left(\frac{2c}{m}(\xi+ k)\right) = 0$
for some $j,k\in\Z$. Such points must satisfy 
\[
\frac{\xi+ j}{m} = 1-\frac{\xi+\ell}{m},\ 0\le j \le \frac {m-1}{2}  < \ell\le m-1.
\]
Thus, we need $\xi = \frac{1}{2}(m-j-\ell)$, i.e.~$\xi\in\{0,\pm \frac{1}{2} \}$. In view of the continuity of 
$\widehat\phi_p$, it follows that there exists $\eta > 0$ such that \eqref{eq:condthm52} holds for 
$\xi\in \widetilde E = [-\frac 1 2 + \eta, -\eta]\cup[\eta, \frac 1 2 -\eta]$. It remains to observe that with 
any given $\eta  \in (0,\frac1{4})$ inequality \eqref{eq:condthm52} will hold for $\delta$ sufficiently small. 
\end{proof}

\subsection{Explicit quantitative estimates for the Gaussian}\

{%
To obtain explicit estimates, we need to establish a precise relation between $\eta$ and $\delta$ in the proof of Corollary \ref{coro:main}. In other words,
we need to estimate $\min\limits_{\xi \in\widetilde E} \psi(\xi)$,  where, as above, $\widetilde E = [-\frac 1 2 + \eta, -\eta]\cup[\eta, \frac 1 2 -\eta]$, $\eta \in(0, \frac1{4})$, and the function $\psi$ is given by
$$
\omega(\xi) = \min_{j,k\in\Z} \abs{\widehat\phi_p\left(\frac{2c}{m}(\xi+ j)\right)
-\widehat\phi_p\left(\frac{2c}{m}(\xi+k)\right)}.
$$
\begin{lemma}\label{Liplemma}
Let $E$ and $\widetilde E$ be as in  Corollary \ref{coro:main}.
Assume that the kernel $\phi\in\Phi$ is such that $\widehat\phi$ is differentiable on $E$ and 
$$
\min_{\xi \in  E} \abs{\widehat\phi^\prime(\xi)}\ \ge R.
$$ 
Then
\[
 \min\limits_{\xi\in\widetilde E}\omega(\xi) \ge\frac{4cR\eta}{m}. 
\]
\end{lemma}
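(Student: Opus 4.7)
The plan is to reduce the estimate on $\omega(\xi)$ to a purely geometric statement about the ``folded'' arguments of $\widehat\phi_p$ and then apply the derivative lower bound via a mean value argument.

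First, for $\xi\in\widetilde E$ and $\ell=0,\ldots,m-1$, let $y_\ell\in[-c,c)$ be the representative of $\frac{2c}{m}(\xi+\ell)$ modulo $2c$, so that $\widehat\phi_p\bigl(\frac{2c}{m}(\xi+\ell)\bigr)=\widehat\phi(y_\ell)=\widehat\phi(|y_\ell|)$ by evenness of $\widehat\phi$. Writing $\tau_\ell=y_\ell/(2c)\in[-1/2,1/2)$, it suffices to prove the geometric estimate $\bigl||y_j|-|y_k|\bigr|=2c\bigl||\tau_j|-|\tau_k|\bigr|\geq 4c\eta/m$ for all $j\neq k$ in $\{0,\ldots,m-1\}$; the derivative hypothesis, together with the monotonicity of $\widehat\phi$ on $\R_+$, will then convert this geometric bound into the claimed value separation via the integral form of the mean value theorem.

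The heart of the argument is the elementary identity
\[
\bigl||\tau_j|-|\tau_k|\bigr|=\min\bigl(|\tau_j-\tau_k|,\,|\tau_j+\tau_k|\bigr),
\]
which splits the estimate into two pieces, each to be bounded below by $2\eta/m$. The difference is easy: since $\tau_j-\tau_k\equiv (j-k)/m\pmod{1}$ and lies in $(-1,1)$, one has $|\tau_j-\tau_k|\in\{|j-k|/m,\,1-|j-k|/m\}\geq 1/m\geq 2\eta/m$ (using $\eta<1/4$). For the sum I would use $|\tau_j+\tau_k|\geq\frac{1}{m}\dist(2\xi+j+k,\,m\Z)$ and run a short case analysis: because $2\xi\in[-1,1]$ and $j+k\in\{1,\ldots,2m-3\}$, the nearest multiple of $m$ to $2\xi+j+k$ lies within unit distance only in the four configurations $j+k\in\{1,\,m-1,\,m,\,m+1\}$, and each of these four cases traces back to a specific ``bad'' point $\xi\in\{-1/2,1/2,0,-1/2\}$ respectively. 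The hypothesis $\dist(\xi,\{0,\pm 1/2\})\geq\eta$ then yields $\dist(2\xi+j+k,\,m\Z)\geq 2\eta$ in each critical case, while the remaining cases yield distance at least $1\geq 2\eta$ trivially. Combining both bounds produces $\bigl||y_j|-|y_k|\bigr|\geq 4c\eta/m$.

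For the concluding step, the monotonicity of $\widehat\phi$ on $\R_+$ makes $\widehat\phi'$ of constant sign, and therefore
\[
\bigl|\widehat\phi(|y_j|)-\widehat\phi(|y_k|)\bigr|=\int_{\min(|y_j|,|y_k|)}^{\max(|y_j|,|y_k|)}|\widehat\phi'(y)|\,\mathrm{d}y\geq R\bigl||y_j|-|y_k|\bigr|\geq\frac{4cR\eta}{m},
\]
as claimed. The main obstacle is the case analysis behind the lower bound on $|\tau_j+\tau_k|$: one must correctly identify the four near-coincidence configurations and match each to the appropriate excluded point of $\xi$. The final Lipschitz-type step is routine once this geometric separation is in place.
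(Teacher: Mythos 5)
Your argument is essentially the paper's proof: the paper simply records the geometric separation $\min_{\xi\in\widetilde E}\min_{j\neq k}\bigl|\,|\tfrac{2c}{m}(\xi+j)|-|\tfrac{2c}{m}(\xi+k)|\,\bigr|=\tfrac{4c\eta}{m}$ as a one-line observation and then invokes the mean value theorem, exactly as you do, and your identity $\bigl||\tau_j|-|\tau_k|\bigr|=\min(|\tau_j-\tau_k|,|\tau_j+\tau_k|)$ together with the case analysis on $j+k$ correctly justifies that observation. One caveat that applies equally to your write-up and to the paper's: the last step uses $|\widehat\phi'|\ge R$ on the entire interval between $|y_j|$ and $|y_k|$, whereas the hypothesis only controls $\widehat\phi'$ on the disconnected set $E$, and in the extremal configuration that interval is exactly a gap of $E$; this is harmless for the Gaussian (where $|\widehat\phi'|$ is unimodal away from the origin and the bound extends to the gaps), but it is a shared imprecision rather than a divergence from the paper.
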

\begin{proof}
Observe that %
\[
 \min_{\xi\in\widetilde E}\min_{j,k\in\Z} \abs{\left|\frac{2c}{m}(\xi+ j)\right|-\left|\frac{2c}{m}(\xi+ k)\right|} = \frac{2c}{m}2\eta.
\]
With this, the assertion of the lemma follows immediately from the mean value theorem.
\end{proof}

The above observation leads to the following explicit estimate for the Gaussian kernel.

\begin{prop}
\label{prop:gauss}
Let $\widehat\phi(\xi)=e^{-\sigma^2\xi^2}$, $\sigma\not=0$, and $m\geq 2$ be an integer. Given $\eta  \in (0,\frac1{4})$, let $\widetilde E = [-\frac 1 2 + \eta, -\eta]\cup[\eta, \frac 1 2 -\eta]$ and 
$E=\dst\left(\frac{2c}{m}(\tilde E+\Z)\right)\cap[-c,c]$. 
Then,  for any $f\in PW_c$, we have
\begin{equation}
\label{eq_ppp}
A\norm{\widehat f\un_{E}}^2 \le \int_0^1 \sum_{k\in\ZZ}\left|f_t\left(\frac{m\pi}{c}k\right)\right|^2\,\mathrm{d}t \le \|\widehat f\|^2,
\end{equation}
where
\begin{equation}\label{estA}
A= \frac{c}{2e\pi^2(2(\sigma c)^2+m)}\frac{(4cR\eta)^{m(m-1)}}{{m^{1-m+2m^2}}}
\quad\mbox{with}\quad
R = 2\sigma^2\min\left\{\eta e^{-(\sigma\eta)^2}, ce^{-(\sigma c)^2}\right\}.
\end{equation}
\end{prop}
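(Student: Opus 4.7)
The plan is to specialize Theorem~\ref{sufconT}, in conjunction with Lemma~\ref{Liplemma} and the setup of Corollary~\ref{coro:main}, to the Gaussian kernel and then simplify every constant explicitly. The input $\widehat\phi(\xi)=e^{-\sigma^2\xi^2}$ is even and strictly decreasing on $\mathbb{R}_+$, so Corollary~\ref{coro:main} applies and the minimum of $\widehat\phi$ on $[-c,c]$ is $\kappa_\phi=e^{-\sigma^2c^2}$, attained at $\xi=\pm c$. Theorem~\ref{sufconT} then delivers a lower frame constant of the form
\[
A=\frac{c}{4e\pi^2}\cdot\frac{\delta^{m(m-1)}}{m^{1+m^2}}\cdot\frac{\kappa_\phi^{2/m}-1}{\ln\kappa_\phi},
\]
reducing the task to two items: a clean estimate for the $\kappa_\phi$-ratio, and a lower bound on $\delta$ through Lemma~\ref{Liplemma}.

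For the $\kappa_\phi$-ratio, substituting $\kappa_\phi=e^{-\sigma^2c^2}$ rewrites it as $(1-e^{-2\sigma^2c^2/m})/(\sigma^2c^2)$. Applying the elementary inequality $1-e^{-x}\geq x/(1+x)$ for $x\geq 0$ (which is just $e^x\geq 1+x$ rearranged) to $x=2\sigma^2c^2/m$ produces the clean lower bound $2/(m+2(\sigma c)^2)$, and combining this with the prefactor $c/(4e\pi^2)$ gives the $c/(2e\pi^2(2(\sigma c)^2+m))$ appearing in \eqref{estA}.

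For the lower bound on $\delta$, I would differentiate explicitly: $|\widehat\phi'(\xi)|=2\sigma^2|\xi|e^{-\sigma^2\xi^2}=2\sigma\,h(\sigma|\xi|)$, where $h(y)=ye^{-y^2}$ is unimodal on $[0,\infty)$ with its unique maximum at $y=1/\sqrt{2}$. The set $E$ is the complement in $[-c,c]$ of small neighborhoods of the points $ck/m$, $k\in\mathbb{Z}$, so the minimum of $|\widehat\phi'|$ over $E$ is attained at an endpoint of one of its connected components; unimodality of $h$ then reduces the analysis to comparing $h$ at the smallest and at the largest admissible value of $|\xi|$, which yields the stated $R=2\sigma^2\min\{\eta e^{-(\sigma\eta)^2},\,ce^{-(\sigma c)^2}\}$. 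Lemma~\ref{Liplemma} then promotes this to $\delta\geq 4cR\eta/m$; substituting into the expression for $A$ and regrouping powers of $m$ via $1+m^2+m(m-1)=2m^2-m+1$ gives \eqref{estA}. The upper bound $\|\widehat f\|^2$ is a weakening of the sharper $(c/2\pi^2)\|\widehat f\|^2$ already supplied by Theorem~\ref{sufconT}.

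The step I expect to be the main obstacle is the analysis underlying the bound on $R$: one must track where the critical point $1/(\sigma\sqrt{2})$ of $h$ sits relative to the connected components of $E$, isolate the extremal $|\xi|$, and then extract a single lower bound valid across all admissible regimes of $(\sigma,c,m,\eta)$. Once that extremal configuration is identified, the $\kappa_\phi$-ratio simplification and the final $m$-exponent bookkeeping reduce to routine algebra.
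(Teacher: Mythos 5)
Your proposal follows exactly the paper's route: verify the hypothesis of Lemma~\ref{Liplemma} for the Gaussian with the stated $R$, feed $\newmphi=e^{-(\sigma c)^2}$ and $\delta=4cR\eta/m$ into Theorem~\ref{sufconT}, and simplify the $\newmphi$-ratio via $\frac{1-e^{-t}}{t}\geq\frac{1}{1+t}$ together with the same exponent bookkeeping $m(m-1)+m^2+1=2m^2-m+1$. The unimodality analysis of $|\widehat\phi'|$ that you flag as the main obstacle is precisely the step the paper also leaves as a one-line assertion (``Lemma~\ref{Liplemma} applies with $R$ given by \eqref{estA}''), so your write-up matches the paper's argument in both structure and level of detail.
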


\begin{proof}
Observe that Lemma \ref{Liplemma} applies with $R$ given by \eqref{estA}.
It remains to apply Theorem \ref{sufconT} with $\newmphi = e^{-(\sigma c)^2}$ and $\delta = {4cR\eta/m}$.
We deduce that  \eqref{eq_ppp} holds with
\begin{align*}
A &=\frac{c}{4e\pi^2}\frac{\delta^{m(m-1)}}{ m^{1+m^2}}
\frac{\newmphi^{2/m}-1}{\ln \newmphi}
=\frac{c}{2e\pi^2}\frac{1-e^{-\tfrac{2(\sigma c)^2}{m}}}{\tfrac{2(\sigma c)^2}{m}}\cdot\frac{(4cR\eta)^{m(m-1)}}{{m^{2-m+2m^2}}}.
\end{align*}
Using $\dst\frac{1-e^{-t}}{t}\geq\frac{1}{t+1}$, we obtain the claimed bound.
\end{proof}

We remark that the estimate in the above proposition is quite
pessimistic. Our numerical experiments showed that the true bound may
be much  better.
}

\section{Remez-Tur\'an Property and Fixing the Blind Spots}\label{rtpbs}
In Theorem \ref{sufconT}, the main issue is that the lower bound is only in terms of $\norm{\widehat f\un_{E}}$ 
and not $\norm{\widehat f}$ so that stability is not obtained. %
In this section, we consider a certain class of subsets of $PW_c$ for which Theorem \ref{sufconT} does lead to stable reconstruction.

\subsection{Remez-Tur\'an Property}

\begin{definition}
Let $V\subset PW_c$ and write $\widehat V=\{\widehat f\,:\ f\in V\}\subset L^2([-c,c])$.
We will say that $\widehat V$ has the Remez-Tur\'an property if, for every $E\subset [-c,c]$ of positive Lebesgue measure,
there exists $C=C(E,V)$ such that, for every $f\in V$, 
\begin{equation}\label{eq:tr}
\norm{\widehat f\un_E}_2\geq C\norm{\widehat f\un_{[-c,c]}}_2.
\end{equation}
\end{definition}
When $V$ is a finite dimensional subspace of $PW_c$ such that $\widehat V$ consists of analytic functions 
(restricted to $I$), then $\widehat V$ has the Remez-Tur\'an property since $\norm{\widehat f\un_E}_2$
is then a norm on $V$ which, by finite dimensionality of $V$, is equivalent to $\norm{\widehat f\un_{[-c,c]}}_2$.
However, the previous argument does not provide any quantitative estimate on the constant $C(E,V)$.
Let us start with two fundamental examples of spaces that have the Remez-Tur\'an property, and for which quantitative estimates are known.

\subsection{Fourier polynomials}
Let $V_N$ be given by \eqref{eq_modc}, so that $\widehat V_N=\{P\un_{[-c,c]}, P\in\C_N[x]\}$ is the space of polynomials of degree at most $N$, restricted to $I$. The quantitative form of the Remez-Tur\'an property for $\widehat V_N$ is then known as the Remez Inequality \cite{BE95}: for every polynomial of degree at most $N$,
\begin{equation}
\label{eq:remez}
\norm{P\un_{[-c,c]}}_2%
\leq \left(\frac{8c}{|E|}\right)^{N+1/2}\norm{P\un_E}_2. %
\end{equation}

\subsection{Sparse sinc translates with free nodes}
Let $V_N$ be given by \eqref{eq_modb}, so that $\widehat V_N=\dst\left\{P\un_{[-c,c]}: P(\xi)=\sum_{n=1}^Nc_ne^{2i\pi\lambda_n\xi}\right\}$. Recall that $\widehat V_N$ is not a linear subspace. The fact that $\widehat V_N$ has the Remez-Tur\'an property is a deep result of Nazarov \cite{Na}: for every exponential polynomial of order at most $N$, {\it i.e.} every $P$ of the form $P(\xi)=\sum_{n=1}^Nc_ne^{2i\pi\lambda_n\xi}$ one has
\begin{equation}
\label{eq:remezpol}
\norm{P\un_{[-c,c]}}%
\leq \left(\frac{\gamma c}{|E|}\right)^{N+1/2}\norm{P\un_E},
\end{equation}
where $\gamma$ is an absolute constant.

\subsection{Prolate spheroidal wave functions (PSWF)}
The Prolate spheroidal wave functions (PSWFs)
denoted by $(\psi_{n,c}(\cdot))_{n\geq 0}$, are defined as the bounded eigenfunctions  of
the Sturm-Liouville differential operator $\mathcal L_c,$ defined on
$C^2([-1,1]),$ by
\begin{equation}
\label{eq1.0}
\mathcal{L}_c(\psi)=-(1-x^2)\frac{d^2\psi}{d\, x^2}+2 x\frac{d\psi}{d\,x}+c^2x^2\psi.
\end{equation}
They are also  the eigenfunctions of the finite Fourier transform $\mathcal F_c$, as well as the ones of the operator
$\displaystyle \mathcal Q_c= \frac{c}{2\pi} \mathcal F^*_c \mathcal F_c ,$ which are defined  on $L^2([-1,1])$ by
\begin{equation}
\mathcal{F}_c(f)(x)= \int_{-1}^1
e^{i\, c\, x\, y} f(y)\,\mbox{d}y,
\quad\mbox{and}\quad
\mathcal{Q}_c(f)(x)=\int_{-1}^1 \frac{\sin(c(x-y))}{\pi (x-y)} f(y)\,\mbox{d}y.
\end{equation}
They are normalized so that $\|\psi_{n,c}\|_{L^2([-1,1])}=1$  and $\psi_{n,c}(1)>0$.
We call  $(\chi_n(c))_{n\geq 0}$ the corresponding eigenvalues of $\mathcal L_c$, $\mu_n(c)$ the eigenvalues of 
$\mathcal{F}_c$ 
\begin{equation}
\label{eq:eigen}
\mu_n(c)\psi_{n,c}(x)=\int_{-1}^1\psi_{n,c}(y)e^{-icxy}\,\mbox{d}y,\ x\in [-1,1].
\end{equation}
and $\lambda_n(c)$ the ones of $\mathcal{Q}_c$ which are arranged in decreasing order. They are related by
$$
\lambda_n(c)=\dst\frac{c}{2\pi}|\mu_n(c)|^2.
$$
A well known property is then that $\norm{\psi_{n,c}}_{L^2(\R)}=\frac{1}{\sqrt{\lambda_n(c)}}$.
Further, their Fourier transform is given by
\begin{equation}
\label{eq:prolateFourier}
\widehat{\psi_{n,c}}(\xi)=
\int_{\R}\psi_{n,c}(x)e^{-ix\xi}\,\mbox{d}x
=(-1)^k\frac{2\pi}{c}\frac{\mu_n}{|\mu_n(c)|^2}\psi_{n,c}\left(\frac{\xi}{c}\right)\un_{|\xi|\leq c}
\end{equation}
The crucial commuting property of $\mathcal L_c$ and $\mathcal{Q}_c$ has  been first observed by 
Slepian and co-authors \cite{prolate1}, whose name is closely associated with all properties of PSWFs, 
the spectrum of the operators $\mathcal L_c$ and $Q_c$ and almost time- and band-limited functions.
Among the basic properties of PSWFs, we cite their analytic extension to the whole real
line and their unique properties to form an orthonormal basis of
$L^2([-1,1])$  and an orthogonal basis of $PW_c$.

The prolate spheroidal wave functions admit a good representation in terms
of the orthonormal basis of Legendre polynomials.
In agreement with the standard practice, we will be denoting by $P_k$ the classical Legendre
polynomials, defined by the three-term recursion
$$
P_{k+1}(x) =\frac{2k + 1}{k + 1}x P_k(x) - \frac{k}{k + 1}P_{k-1}(x),
$$
with the initial conditions
$$
P_0(x) = 1, P_1(x) = x.
$$
These polynomials are orthogonal in $L^2([-c,c])$ and are normalized so that
$$
P_k(1)=1\quad\mbox{and}\quad\int_{-1}^1 P_k(x)^2\,\mbox{d}x=\frac{1}{k+1/2}.
$$
We will denote by $P_{k,c}$ the normalized Legendre polynomial
$\dst \widetilde P_{k,c}(x)=\sqrt{\frac{2k+1}{2c}}P_k\left(\frac{x}{c}\right)$ and the $P_{k,c}$'s then form an orthonormal basis of $L^2([-c,c])$.

We start from the following identity relating Bessel functions of the first kind to the finite Fourier transform 
of the Legendre polynomials,  {\it see} \cite{Andrews}: for every $x\in\R$,
\begin{equation}
\label{eq:2.2.9}
\int_{-1}^1 e^{i x y}  P_k(y)\,\mbox{d}y =2i^kj_k(x), \ k\in\N,
\end{equation}
where $j_k$ is the spherical Bessel function defined by 
$j_k(x)=\displaystyle (-x)^k\left(\frac{1}{x}\frac{\mbox{d}}{\mbox{d}x}\right)^k\frac{\sin x}{x}$.
Note that $j_k$ has the same parity as $k$ and recall that, for $x\geq0$, 
$j_k(x)=\sqrt{\frac{\pi}{2x}}J_{k+1/2}(x)$ where $J_\alpha$ is the Bessel function
of the first kind. In particular, from the well-known bound $|J_{\alpha}(x)|\leq \frac{|x|^{\alpha}}{2^{\alpha}\Gamma(\alpha+1)}$,
valid for all $x\in\R$,   we deduce that
$$
|j_k(x)|\leq\sqrt{\pi}\frac{|x|^k}{2^{k+1}\Gamma(k+3/2)}, \ k\in\N.
$$
Using the bound $\Gamma(x)\geq\sqrt{2\pi}x^{x-1/2}e^{-x}$ we get
\begin{equation}
\label{eq:boundjn}
|j_k(x)|\leq \frac{e^{k+3/2}}{\sqrt{2}(2k+3)^{k+1}}|x|^k, \ k\in\N. 
\end{equation}

We have the following lemma.

\begin{lemma}
\label{lem:legendre}
Write $\widehat{\psi_{n,c}}=\sum_{k\geq 0}\beta_k^n(c)P_{k,c}$. Then for every $k,\ell\geq 0$
$$
|\beta_k^n|\leq \frac{10}{c^{3/2}|\lambda_n(c)|}\left(\frac{e}{2k+3}\right)^{k+1}
$$
\end{lemma}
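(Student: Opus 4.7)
\medskip

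\noindent\textbf{Proof plan for Lemma \ref{lem:legendre}.}
The plan is to compute $\beta_k^n$ by orthogonality, convert the resulting integral on $[-c,c]$ to one on $[-1,1]$ using \eqref{eq:prolateFourier}, and then extract decay in $k$ by trading the Legendre polynomial $P_k$ for the spherical Bessel function $j_k$ via \eqref{eq:2.2.9} and the eigenvalue equation \eqref{eq:eigen}. The sharp polynomial bound \eqref{eq:boundjn} on $j_k$ then furnishes the desired geometric decay $(e/(2k+3))^{k+1}$.

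\medskip

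First, since $\{P_{k,c}\}_{k\geq 0}$ is orthonormal in $L^2([-c,c])$ and $P_{k,c}$ is real,
\[
\beta_k^n=\int_{-c}^c \widehat{\psi_{n,c}}(\xi)\,P_{k,c}(\xi)\,\mathrm{d}\xi.
\]
Substituting the formula \eqref{eq:prolateFourier} for $\widehat{\psi_{n,c}}$ and making the change of variables $\xi=cu$, the integral reduces (up to a factor of modulus $\tfrac{2\pi}{|\mu_n(c)|}\sqrt{(2k+1)/(2c)}$) to
\[
I_k^n:=\int_{-1}^1 \psi_{n,c}(u)\,P_k(u)\,\mathrm{d}u.
\]

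\medskip

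Next I would extract decay in $k$ from $I_k^n$. The idea is to insert the eigenfunction equation \eqref{eq:eigen}. Multiplying \eqref{eq:eigen} by $P_k(x)$, integrating over $[-1,1]$ in $x$, and swapping the order of integration yields
\[
\mu_n(c)\,I_k^n=\int_{-1}^1\psi_{n,c}(y)\left(\int_{-1}^1 P_k(x)\,e^{-icxy}\,\mathrm{d}x\right)\mathrm{d}y.
\]
By \eqref{eq:2.2.9} (after conjugating, since $P_k$ is real), the inner integral equals $2(-i)^k j_k(cy)$. Therefore
\[
|\mu_n(c)|\,|I_k^n|\leq 2\left|\int_{-1}^1\psi_{n,c}(y)\,j_k(cy)\,\mathrm{d}y\right|
\leq 2\,\|\psi_{n,c}\|_{L^2([-1,1])}\,\|j_k(c\cdot)\|_{L^2([-1,1])}
=2\,\|j_k(c\cdot)\|_{L^2([-1,1])},
\]
where I used the normalization $\|\psi_{n,c}\|_{L^2([-1,1])}=1$ in the last step.

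\medskip

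The bound \eqref{eq:boundjn} applied under the integral then gives
\[
\|j_k(c\cdot)\|_{L^2([-1,1])}^2\leq \frac{e^{2k+3}\,c^{2k}}{2(2k+3)^{2k+2}}\int_{-1}^1 y^{2k}\,\mathrm{d}y
=\frac{e^{2k+3}\,c^{2k}}{(2k+1)(2k+3)^{2k+2}}.
\]
Combining with the formula for $\beta_k^n$ above, the identity $|\mu_n(c)|^2=2\pi\lambda_n(c)/c$ (which converts the two factors of $1/|\mu_n(c)|$ into $c/\lambda_n(c)$ up to constants), and simplifying the powers of $e$ to match $(e/(2k+3))^{k+1}$ yields the stated estimate. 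The main obstacle is purely arithmetic: carefully tracking all powers of $c$, $2k+1$, $2k+3$, and collapsing the explicit constant into the clean absolute constant $10$; the structural argument itself is routine.
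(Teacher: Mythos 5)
Your proposal is correct and follows essentially the same route as the paper's proof: orthogonality against the normalized Legendre basis, the identity \eqref{eq:prolateFourier} with a change of variables, the eigenfunction equation \eqref{eq:eigen} plus Fubini to trade $P_k$ for the spherical Bessel function via \eqref{eq:2.2.9}, and finally Cauchy--Schwarz with $\norm{\psi_{n,c}}_{L^2([-1,1])}=1$ together with the bound \eqref{eq:boundjn}. The only divergence is cosmetic bookkeeping (you keep the argument $j_k(cy)$ where the paper writes $j_k(y)$), so the structural argument is the same.
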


This bound is an adaptation of techniques from \cite{JKS} to improve the proof of the exponential decay from 
\cite{Xiao}.

\begin{proof}
Using \eqref{eq:prolateFourier}, we have
\begin{eqnarray*}
\beta_k^n(c)&=&\scal{\psi_{n,c},P_{k,c}}_{L^2(I)}
=\int_{-c}^{c} \widehat{\psi_{n,c}}(x)P_{k,c}(x)\,\mbox{d}x\\
&=&(-1)^k\frac{\mu_n(c)}{|\mu_n(c)|^2}\frac{2\pi}{c}\sqrt{\frac{2k+1}{2c}}\int_{-c}^{c} \psi_{n,c}(x/c)P_k\left(\frac{x}{c}\right)\,\mbox{d}x\\
&=&(-1)^k\pi\frac{\mu_n(c)}{c^{1/2}|\mu_n(c)|^2}\sqrt{4k+2}\int_{-1}^{1} \psi_{n,c}(x)P_k(x)\,\mbox{d}x\\
&=&\frac{(-1)^k\pi\sqrt{4k+2}}{c^{1/2}|\mu_n(c)|^2}\int_{-1}^{1}
\int_{-1}^1\psi_{n,c}(y)e^{-icxy}\,\mbox{d}y\,P_k(x)\,\mbox{d}x
\end{eqnarray*}
with \eqref{eq:eigen}. Recalling that $\lambda_n(c)=\dst\frac{c}{2\pi}|\mu_n(c)|^2$ and using Fubini, we get
\begin{eqnarray*}
\beta_k^n(c)&=&\frac{(-1)^k 2\sqrt{4k+2}}{c^{3/2}\lambda_n(c)}
\int_{-1}^{1}\int_{-1}^1P_k(x)e^{-icxy}\,\mbox{d}x\,\psi_{n,c}(y)\,\mbox{d}y\\
&=& \frac{(-i)^k 4\sqrt{4k+2}}{c^{3/2}\lambda_n(c)}
\int_{-1}^{1} \psi_\ell(y) j_{k}(y)\,\mbox{d}y
\end{eqnarray*}
with \eqref{eq:2.2.9}. But then, from \eqref{eq:boundjn} and Cauchy-Schwarz, we deduce that
\begin{eqnarray*}
|\beta_k^n(c)|
&\leq& \frac{ 4\sqrt{4k+2}}{c^{3/2}\lambda_n(c)}
\left(\int_{-1}^{1} j_{k}(y)^2\,\mbox{d}y\right)^{1/2}\\
&\leq& \frac{ 4\sqrt{2k+1}e^{k+3/2}}{(2k+3)^{k+1}c^{3/2}\lambda_n(c)}
\left(\int_{-1}^{1}|y|^{2k}\,\mbox{d}y\right)^{1/2}\\
&=&4\sqrt{2e}\frac{1}{c^{3/2}\lambda_n(c)}\left(\frac{e}{2k+3}\right)^{k+1}.
\end{eqnarray*}
As $4\sqrt{2e}\leq 10$, the result follows.
\end{proof}

We will also need the following estimate.

\begin{lemma}\label{lemma_eigs}
The eigenvalues \eqref{eq:eigen} of $\mathcal{Q}_c$ satisfy
\begin{equation}\label{eq_lemma_eigs}
\Lambda_N:=\left(
\sum_{n=0}^N \frac{1}{\lambda_n(c)}
\right)^{1/2}
\leq \begin{cases}
\sqrt{3+ec} &\mbox{if } N \leq \max(ec,2)\\
\left(\frac{2(N+1)}{ec}\right)^{\frac{2N+1}{2}}&\mbox{if }N\geq \max(ec,2)
\end{cases}.
\end{equation}
\end{lemma}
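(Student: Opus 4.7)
The plan is to control $\Lambda_N^2=\sum_{n=0}^N 1/\lambda_n(c)$ by a termwise decay estimate for $\lambda_n(c)$ followed by a direct summation. The two cases in the statement reflect the classical ``plunge'' behaviour of PSWF eigenvalues: for $n$ below roughly $2c/\pi$ the $\lambda_n(c)$ stay close to $1$, while for $n$ past the plunge they decay super-exponentially. Accordingly, my proof splits along the threshold $n\approx ec$, and in each regime I aim for a pointwise bound of the appropriate type.

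For the \emph{tail regime} $N\geq\max(ec,2)$, I would aim at a bound of the form
\[
\frac{1}{\lambda_n(c)} \leq \left(\frac{2(n+1)}{ec}\right)^{2n+1}, \qquad n\geq \max(ec,2).
\]
The natural tool is the Legendre-coefficient Lemma \ref{lem:legendre}. Note that the normalized functions $\tilde\psi_n:=\sqrt{\lambda_n(c)/(2\pi)}\,\widehat{\psi_{n,c}}$ form an orthonormal basis of $L^2([-c,c])$, so Parseval yields $\sum_k|\beta_k^n|^2=2\pi/\lambda_n(c)$, and dually $\sum_n\lambda_n(c)|\beta_k^n|^2=2\pi$. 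Substituting the explicit decay from Lemma \ref{lem:legendre} into the tail of the Legendre expansion (truncating at $k=n$, where $(e/(2k+3))^{k+1}$ first becomes very small relative to the head) and balancing against Parseval produces a lower bound on $\lambda_n(c)$ of precisely the claimed form. Once this termwise estimate is in place, the ratio $C_{n+1}/C_n$ with $C_n=(2(n+1)/(ec))^{2n+1}$ behaves like $(n/c)^2$ and exceeds $4$ for $n\geq ec$, so the sum is dominated by its last term up to a fixed multiplicative constant; taking square roots and absorbing this constant into the exponent $(2N+1)/2$ gives the bound for $\Lambda_N$.

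For the \emph{bulk regime} $N\leq\max(ec,2)$ the sum contains at most $\lceil ec\rceil+1$ terms. Here monotonicity $\lambda_0\geq\cdots\geq\lambda_N$ together with the trace identity $\sum_{n\geq 0}\lambda_n(c)=2c/\pi$ provides the basic control: for $n$ well below the plunge threshold one has $\lambda_n(c)$ of order $1$, contributing $O(1)$ to each reciprocal, while the few plunge indices yield at most a bounded contribution. A clean accounting (possibly invoking quantitative PSWF lower bounds such as those of Bonami--Karoui) gives $\sum_{n=0}^N 1/\lambda_n(c)\leq 3+ec$, hence $\Lambda_N\leq\sqrt{3+ec}$.

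The main obstacle is the sharp form of the termwise decay bound in the tail regime. Extracting $1/\lambda_n(c)\leq(2(n+1)/(ec))^{2n+1}$ with the precise constants, without surviving factors of $c^{-3/2}$ (from Lemma \ref{lem:legendre}) or $\sqrt{N+1}$ (from summation), requires a careful choice of the truncation index and a tight geometric-series estimate. The bulk-regime bound is comparatively softer, but requires monotonicity and the trace identity to be used in tandem so that the constant $3+ec$ genuinely absorbs the plunge contribution.
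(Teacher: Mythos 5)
Your route is genuinely different from the paper's, and its central step does not work. The paper proves Lemma \ref{lemma_eigs} in two lines: it \emph{imports} pointwise eigenvalue estimates from \cite{JKS} and \cite{BJK} (together with $\lambda_n(c)<1$) and then sums; nothing is derived from Lemma \ref{lem:legendre}. Your plan instead tries to manufacture the termwise bound $1/\lambda_n(c)\leq\bigl(2(n+1)/(ec)\bigr)^{2n+1}$ from Lemma \ref{lem:legendre} via Parseval, and this is circular. Parseval gives $\sum_k|\beta_k^n|^2=\|\widehat{\psi_{n,c}}\|_{L^2([-c,c])}^2=2\pi/\lambda_n(c)$ \emph{exactly}, while the bound of Lemma \ref{lem:legendre} itself carries the prefactor $1/\lambda_n(c)$; substituting one into the other yields $2\pi/\lambda_n(c)\leq C(c)/\lambda_n(c)^2$, i.e.\ an $n$-independent \emph{upper} bound on $\lambda_n(c)$ and no decay information whatsoever. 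No choice of truncation index repairs this: the Legendre-coefficient decay is uniform in $n$ apart from that prefactor, so it cannot separate the eigenvalues or produce the \emph{lower} bound on $\lambda_n(c)$ that an upper bound on $1/\lambda_n(c)$ requires. The dual identity $\sum_n\lambda_n(c)|\beta_k^n|^2=2\pi$ and the trace identity $\sum_n\lambda_n(c)=2c/\pi$ suffer from the same defect in your ``bulk regime'': together with monotonicity they only give upper bounds on individual $\lambda_n(c)$, hence lower bounds on $1/\lambda_n(c)$, and cannot control the plunge indices. The input you actually need — a quantitative non-asymptotic lower bound on $\lambda_n(c)$ — is precisely what the paper outsources to \cite{BJK}, and your proposal only gestures at it parenthetically.

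Two further cautions. First, on direction: the estimate the paper cites is $\lambda_n(c)\leq\bigl(ec/(2(n+1))\bigr)^{2n+1}$, which gives $1/\lambda_N(c)\geq\bigl(2(N+1)/(ec)\bigr)^{2N+1}$ — the known inequality of this shape runs \emph{opposite} to the termwise bound you are trying to prove, so you should not expect to recover your bound with these exact constants from any argument; the actual decay rate of $\lambda_n(c)$ has $4n+2$ rather than $2(n+1)$ in the denominator, so a lower bound of the form you posit fails for large $n$. Second, even granting the termwise bound, your summation step produces $\sum_{n\le N}C_n\leq\tfrac{4}{3}C_N$ (plus the bulk contribution), and the resulting factor $\sqrt{4/3}>1$ cannot be ``absorbed into the exponent'' of the fixed expression $\bigl(2(N+1)/(ec)\bigr)^{(2N+1)/2}$. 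In short: follow the paper and quote the two-sided eigenvalue estimates from the PSWF literature rather than attempting to rederive them from Lemma \ref{lem:legendre}.
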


\begin{proof}
Precise pointwise estimates of the $\lambda_n(c)$'s have been obtained in \cite[Section 4 \& Appendix C]{JKS}
and have been further improved in \cite{BJK} to
$$
\lambda_n(c)\leq \left(\frac{ec}{2(n+1)}\right)^{2n+1}\qquad\mbox{for }n\geq\max\left(n,\frac{ec}{2}\right).
$$
while we always have $\lambda_n(c)<1$.

It follows that
$$
\sum_{k=0}^N\frac{1}{\lambda_n(c)}\geq \begin{cases}N+1\leq 3+ec&\mbox{if }N\leq \max(ec,2)\\
\frac{1}{\lambda_N(c)}\geq \left(\frac{2(N+1)}{ec}\right)^{2N+1}&\mbox{if }N\geq \max(ec,2)
\end{cases}.
$$
The result follows.
\end{proof}

We can now prove our Remez lemma for Prolate spheroidal wave functions.

\begin{theorem}[Remez's Lemma for PSWF]
\label{th_remez_prolates}
Let $N$ be an integer and 
$$
V_N=\mbox{span}\{\psi_{0,c},\ldots,\psi_{N,c}\}\subset PW_c.
$$
Then, for every $\psi\in\widehat V_N$ and every $E\subseteq [-c,c]$ of positive measure,
\begin{align}\label{eq_remez}
\norm{\psi}\leq 2 \left(\frac{8c}{|E|}\right)^{K(N)}\norm{\widehat{\psi}\un_E},
\end{align}
where 
\begin{equation}\label{Theta}
K(N) = \begin{cases}
\max\left(\left\lceil\frac{3200(3+ec)}{c^3}\right\rceil,\left\lceil\frac{4ec}{|E|}\right\rceil\right)
 & \mbox{if }N \leq \max(2,ec),\\
\max\left(20,N,\left\lceil\frac{8(N+1)}{|E|}\right\rceil\right)
& \mbox{if } N \geq \max(2,ec).
\end{cases}
\end{equation}
\end{theorem}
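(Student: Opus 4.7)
The plan is to reduce the claim to the classical Remez inequality \eqref{eq:remez} by expanding elements of $\widehat V_N$ in the Legendre basis $(P_{k,c})$ and truncating at degree $K-1$, with $K=K(N)$. Writing $\psi=\widehat f$ with $f=\sum_{n=0}^N c_n\psi_{n,c}$, Lemma \ref{lem:legendre} gives $\widehat{\psi_{n,c}}=\sum_{k\ge 0}\beta_k^n(c)P_{k,c}$, so $\psi=\sum_k\gamma_kP_{k,c}$ with $\gamma_k=\sum_{n=0}^N c_n\beta_k^n(c)$. Set $\psi_K=\sum_{k<K}\gamma_kP_{k,c}$ and $r_K=\psi-\psi_K$; then $\psi_K$ is a polynomial on $[-c,c]$ of degree at most $K-1$. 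Applying \eqref{eq:remez} to $\psi_K$ and using the triangle inequalities $\|\psi\|_2\le\|\psi_K\|_2+\|r_K\|_2$ and $\|\psi_K\un_E\|_2\le\|\psi\un_E\|_2+\|r_K\|_2$ (together with $8c/|E|\ge 1$), the theorem reduces to choosing $K$ so that
$$\|r_K\|_2\le\tfrac{1}{4}\Bigl(\tfrac{|E|}{8c}\Bigr)^{K-1/2}\|\psi\|_2,$$
after which the tail contributions can be absorbed and one recovers the factor $2$ in front.

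The tail is estimated by Cauchy--Schwarz in $n$ together with Lemma \ref{lem:legendre}:
$$|\gamma_k|^2\le\frac{100}{c^3}\Bigl(\frac{e}{2k+3}\Bigr)^{2(k+1)}\Bigl(\sum_{n=0}^N|c_n|^2\Bigr)\Bigl(\sum_{n=0}^N\frac{1}{\lambda_n(c)^2}\Bigr).$$
Since $\lambda_n(c)\le 1$, one has $\sum_n|c_n|^2\le\|f\|_2^2$, and $\sum_n 1/\lambda_n(c)^2\le\Lambda_N^4$; by Plancherel, $\|f\|_2^2$ is a fixed multiple of $\|\psi\|_2^2$. Because $(e/(2k+3))^{2(k+1)}$ decays super-geometrically, summing over $k\ge K$ yields a tail comparable to the first term, so
$$\|r_K\|_2^2\lesssim\frac{\Lambda_N^4}{c^3}\Bigl(\frac{e}{2K+3}\Bigr)^{2(K+1)}\|\psi\|_2^2.$$

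It remains to check that the $K(N)$ from \eqref{Theta} is large enough that
$$\Lambda_N^4\,\Bigl(\frac{e}{2K+3}\Bigr)^{2(K+1)}\Bigl(\frac{8c}{|E|}\Bigr)^{2K-1}\lesssim c^3.$$
In the regime $N\le\max(2,ec)$ the first bound of Lemma \ref{lemma_eigs} gives $\Lambda_N\le\sqrt{3+ec}$; then $K\ge\lceil 3200(3+ec)/c^3\rceil$ dominates the constant prefactor $\Lambda_N^4/c^3$, while $K\ge\lceil 4ec/|E|\rceil$ forces $e/(2K+3)$ sufficiently below $|E|/(8c)$ to beat the Remez factor. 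In the regime $N\ge\max(2,ec)$ we use $\Lambda_N\le(2(N+1)/(ec))^{(2N+1)/2}$: the condition $K\ge N$ cancels the growth of $\Lambda_N^4$ against $(e/(2K+3))^{2(K+1)}$, the condition $K\ge\lceil 8(N+1)/|E|\rceil$ absorbs the factor $(8c/|E|)^{2K-1}$, and $K\ge 20$ handles the residual absolute constants. The principal obstacle is this bookkeeping, balancing a super-geometric decay against two competing geometric factors to land exactly on the thresholds in \eqref{Theta}; it is here that the sharpened bounds of Lemma \ref{lem:legendre} (following \cite{JKS}) and the refined eigenvalue estimate built into Lemma \ref{lemma_eigs} (from \cite{BJK}) are indispensable, since any cruder input would inflate $K(N)$ and destroy the stated inequality.
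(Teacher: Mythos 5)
Your outline coincides with the paper's: expand $\widehat\psi$ in the Legendre basis, truncate at degree roughly $K(N)$, apply the classical Remez inequality \eqref{eq:remez} to the polynomial part, and control the tail via Lemma \ref{lem:legendre} and Lemma \ref{lemma_eigs} before absorbing it with the choice of $K(N)$. However, there is a genuine quantitative gap in your tail estimate. You apply the \emph{unweighted} Cauchy--Schwarz inequality to $\gamma_k=\sum_n c_n\beta_k^n$, which produces the product $\bigl(\sum_n|c_n|^2\bigr)\bigl(\sum_n\lambda_n(c)^{-2}\bigr)$ and hence, after the crude bounds $\sum_n|c_n|^2\le\norm{f}^2$ and $\sum_n\lambda_n^{-2}\le\Lambda_N^4$, the estimate $\norm{r_K}\lesssim c^{-3/2}\Lambda_N^2\bigl(\tfrac{e}{2K+3}\bigr)^{K+1}\norm{\psi}$. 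The paper instead uses the \emph{weighted} Cauchy--Schwarz
\begin{equation*}
\Bigl|\sum_{n=0}^N c_n\beta_k^n\Bigr|^2\le\Bigl(\sum_{n=0}^N\frac{|c_n|^2}{\lambda_n(c)}\Bigr)\Bigl(\sum_{n=0}^N\lambda_n(c)|\beta_k^n|^2\Bigr),
\end{equation*}
where the first factor is exactly $\norm{\psi}^2$ (by orthogonality and $\norm{\psi_{n,c}}_{L^2(\R)}^2=\lambda_n(c)^{-1}$) and the second consumes only one of the two powers of $\lambda_n(c)^{-1}$ coming from Lemma \ref{lem:legendre}, yielding $\norm{R_K}\lesssim c^{-3/2}\Lambda_N\bigl(\tfrac{e}{2K+5}\bigr)^{K+1}\norm{\psi}$ --- a single power of $\Lambda_N$, not two.

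This is not a cosmetic loss: the thresholds in \eqref{Theta} are calibrated to beat $\Lambda_N$, not $\Lambda_N^2$. In the regime $N\ge\max(2,ec)$ one has $\Lambda_N=\bigl(\tfrac{2(N+1)}{ec}\bigr)^{N+1/2}$, and the verification with $K\ge\max(N,\lceil 8(N+1)/|E|\rceil)$ hinges on the exponent matching $\bigl(\tfrac{ec}{2(N+1)}\bigr)^{N+1/2}\le\bigl(\tfrac{ec}{2(N+1)}\bigr)^{K+1/2}\cdots$, which uses precisely $K\ge N$. With your extra factor $\Lambda_N^2$ (in the squared form, $\Lambda_N^4$ versus $\Lambda_N^2$) the same bookkeeping requires $K\ge 2N+1$, which the stated $K(N)$ does not guarantee (take, e.g., $|E|$ comparable to $2c$ with $c$ moderately large, so that $\lceil 8(N+1)/|E|\rceil<2N+1$). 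Your assertion that ``$K\ge N$ cancels the growth of $\Lambda_N^4$'' is therefore false as stated, and the proof does not close without replacing the unweighted Cauchy--Schwarz by the weighted one above. A similar (milder) mismatch occurs in the regime $N\le\max(2,ec)$, where your prefactor is $3+ec$ rather than $\sqrt{3+ec}$.
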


\begin{proof}
Let $\psi=\sum_{n=0}^Nc_\ell\psi_{n,c}$ so that, by orthogonality and the fact that $\norm{\psi_{n,c}}=\lambda_n(c)^{-1/2}$,
$$
\norm{\psi}=\left(\sum_{n=0}^N\frac{|c_n|^2}{\lambda_n(c)}\right)^{1/2}.
$$
On the other hand
$$
\widehat{\psi}=\sum_{n=0}^Nc_\ell\widehat{\psi_{n,c}}
=\sum_{n=0}^Nc_\ell\sum_{k\geq 0}\beta_k^n(c) P_{k,c}.
$$
Let $K$ be an integer that will be fixed later and write
$$
\widehat{\psi}=\sum_{n=0}^Nc_\ell\sum_{k=0}^K\beta_k^n(c)P_{k,c}
+\sum_{n=0}^Nc_\ell\sum_{k>K}\beta_k^n(c)P_{k,c}
:=F_K+R_K.
$$
Note that $F_K$ is a polynomial of degree $K$ so that
\begin{equation}
\label{eq:rem1}
\norm{F_K\un_{[-c,c]}}%
\leq \left(\frac{8c}{|E|}\right)^{K+\frac12}\norm{F_K\un_E} %
\end{equation}
by \eqref{eq:remez}. On the other hand,
$$
R_K=\sum_{k>K}\left(\sum_{n=0}^Nc_n\beta_k^n(c)\right)  P_{k,c}
$$
so that
$$
\norm{R_K\un_E}\leq\norm{R_K\un_{[-c,c]}}
=\left(\sum_{k>K}\abs{\sum_{n=0}^Nc_n\beta_k^n(c)}^2\right)^{1/2}
\leq\left(\sum_{k>K}\sum_{n=0}^N\lambda_n(c)\abs{\beta_k^n(c)}^2\right)^{1/2}\norm{\psi}
$$
by Cauchy-Schwarz. We now apply Lemma \ref{lem:legendre} to get
\begin{eqnarray*}
\norm{R_K\un_E}&\leq& 
\frac{10}{c^{3/2}}\left(\sum_{k>K}\sum_{n=0}^N\frac{1}{\lambda_n(c)}
\left(\frac{e}{2k+3}\right)^{2k+2}\right)^{1/2}\norm{\psi}\\
&=&\frac{10}{c^{3/2}} \left(\sum_{n=0}^N\frac{1}{\lambda_n(c)}\right)^{1/2}
\left(\sum_{k>K}\left(\frac{e}{2k+3}\right)^{2k+2}\right)^{1/2}\norm{\psi}\\
&\leq&\frac{12}{c^{3/2}} \left(\sum_{n=0}^N\frac{1}{\lambda_n(c)}\right)^{1/2}
\left(\frac{e}{2K+5}\right)^{K+1}\norm{\psi}.
\end{eqnarray*}
Using Lemmas \ref{lemma_eigs} and \ref{lem:legendre} we can rewrite this in the form $\norm{R_K\un_E}\leq\Lambda_N\Phi_K\norm{\psi}$ 
with
$$
\Lambda_N:=\begin{cases}
\sqrt{3+ec} &\mbox{if } N \leq \max(ec,2)\\
\left(\frac{2(N+1)}{ec}\right)^{N+\frac{1}{2}}&\mbox{if }N\geq \max(ec,2)
\end{cases}
\quad\mbox{and}\quad \Phi_K=\frac{12}{c^{3/2}}\left(\frac{e}{2K+5}\right)^{K+1}.
$$
Next
\begin{eqnarray*}
\norm{\widehat{\psi}\un_E}&\geq&\norm{F_K\un_E}-\norm{R_K\un_E}
\geq\left(\frac{|E|}{8c}\right)^{K+\frac{1}{2}}\norm{F_K\un_{[-c,c]}} -\norm{R_K\un_{[-c,c]}}\\
&\geq&\left(\frac{|E|}{8c}\right)^{K+\frac{1}{2}}\norm{\widehat{\psi}}-
\left(1+\left(\frac{|E|}{8c}\right)^{K+\frac{1}{2}}\right)\norm{R_K\un_{[-c,c]}}\\
&\geq&\left(\frac{|E|}{8c}\right)^{K+\frac{1}{2}}\norm{\widehat{\psi}}-2\norm{R_K\un_{[-c,c]}}
\end{eqnarray*}
since $E\subset[-c,c]$ implies $\left(\frac{|E|}{8c}\right)^{K+\frac{1}{2}}\leq1$.
Therefore
$$
\norm{\widehat{\psi}\un_E}
\geq\left(\frac{|E|}{8c}\right)^{K+\frac{1}{2}}\left(1-2\Lambda_N\Phi_K
\left(\frac{8c}{|E|}\right)^{K+\frac{1}{2}}\right)\norm{\widehat{\psi}}.
$$
It remains to choose $K$ so that $\dst\Lambda_N\Phi_K\leq\frac{1}{4}\left(\frac{|E|}{8c}\right)^{K+\frac{1}{2}}$.

First, if $N\leq \max(ec,2)$, then we want
$$
\left(\frac{e}{2K+5}\right)^{1/2}\left(\frac{e}{2K+5}\right)^{K+1/2}=\left(\frac{e}{2K+5}\right)^{K+1}\leq \frac{c^{3/2}}{48\sqrt{3+ec}}
\left(\frac{|E|}{8c}\right)^{K+\frac{1}{2}}
$$
so that it is enough that
$\dst\frac{e}{2K+5}\leq \frac{c^3}{48^2(3+ec)}$ and
$\dst\frac{e}{2K+5}\leq \frac{|E|}{8c}$ so we take
$$
K=K(N):=\max\left(\left\lceil\frac{3200(3+ec)}{c^3}\right\rceil,\left\lceil\frac{4ec}{|E|}\right\rceil\right).
$$

On the other hand, if $N\geq \max(ec,2)$, then we want
$$
\left(\frac{e}{2K+5}\right)^{1/2}\left(\frac{e}{2K+5}\right)^{K+1/2}=\left(\frac{e}{2K+5}\right)^{K+1}\leq 
\frac{1}{4}\left(\frac{ec}{2(N+1)}\right)^{N+\frac{1}{2}}\left(\frac{|E|}{8c}\right)^{K+\frac{1}{2}}
$$
Taking $K:=K(N)= \max\left(20,N,\left\lceil\frac{8(N+1)}{|E|}\right\rceil\right)$, we get
$$
\left(\frac{e}{2K+5}\right)^{K+1}\leq
\frac{1}{4}\left(\frac{e}{2K}\right)^{K+1/2}
\leq \frac{1}{4}\left(\frac{ec}{2(N+1)}\frac{|E|}{8c}\right)^{K+1/2}
$$
which gives the desired estimate since $2(N+1)>ec$ and $K\geq N$.
\end{proof}

\subsection{Sampling the heat flow}

Equipped with the Remez-Tur\'an Property, we are ready to close the blind spots in Theorem \ref{sufconT}. We do it only in the case of heat flow as it should be clear how to obtain similar estimates in the case of other kernels $\phi\in\Phi$.

\begin{theorem}
\label{thm:gauss_body}
Let $\widehat\phi(\xi)=e^{-\sigma^2\xi^2}$, $\sigma\not=0$, and $m \geq 2$ be an integer. 
Let $V=V_N$ be given by \eqref{eq_moda}, \eqref{eq_modb}, or \eqref{eq_modc}. 
Then, for every $f\in V$,
\begin{equation}
\kappa\norm{\widehat f}^2 \le 
\int_0^1 \sum_{k\in\ZZ}\left|f_t\left(\frac{m\pi}{c}k\right)\right|^2\,\mathrm{d}t \le \|\widehat f\|^2,
\label{eq:heatsubspace}
\end{equation}
where
\begin{align}
\label{eq_AAAAA}
\kappa =
\frac{c\kappa_0(c)}{(\sigma c)^2+m}
\exp\Bigl(-\kappa_1(c)N-m^2\bigl(-\kappa_2(c)\ln\sigma+\kappa_3(c)\sigma^2+\ln m\bigr)\Bigr)
\end{align}
with $\kappa_j$ positive constants that depend on $c$ only.
\end{theorem}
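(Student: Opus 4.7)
The plan is to chain two already-established ingredients: the partial-recovery estimate for the Gaussian kernel from Proposition \ref{prop:gauss}, and the Remez-Tur\'an inequality appropriate to each signal model. Proposition \ref{prop:gauss} gives, for any $\eta\in(0,\tfrac14)$ and the associated set $E=E(\eta)\subseteq[-c,c]$,
\[
A(\eta,\sigma,m)\,\|\widehat f\un_E\|^2\le\int_0^1\sum_{k\in\ZZ}\left|f_t\left(\tfrac{m\pi}{c}k\right)\right|^2\,\mathrm{d}t\le\|\widehat f\|^2,\qquad f\in PW_c,
\]
with $A$ given by \eqref{estA}. To upgrade $\|\widehat f\un_E\|$ to $\|\widehat f\|$ I would apply the Remez-Tur\'an inequality appropriate to $V_N$: the classical Remez inequality \eqref{eq:remez} for \eqref{eq_modc}, Nazarov's inequality \eqref{eq:remezpol} for \eqref{eq_modb}, and Theorem \ref{th_remez_prolates} for \eqref{eq_moda}. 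Each produces an estimate of the form $\|\widehat f\|^2\le C(E,V_N)^2\,\|\widehat f\un_E\|^2$ for $f\in V_N$, and combining the two yields \eqref{eq:heatsubspace} with $\kappa=A(\eta,\sigma,m)/C(E,V_N)^2$.

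Next I would fix $\eta$ once and for all (say $\eta=\tfrac18$), which is admissible because $\widetilde E$ does not depend on $\sigma$, $m$, or $N$. A direct computation of the $\tfrac{2c}{m}$-periodization shows $|E|=2c(1-4\eta)$, a fixed multiple of $c$ independent of $m$. In each of the three Remez-Tur\'an inequalities, $C$ is of the form $(\mathrm{const}(c))^{\alpha(N)}$ with $\alpha(N)=N+\tfrac12$ (Fourier polynomials and sparse sinc translates) or $\alpha(N)=K(N)=O(N)$ (PSWF, via the second branch of \eqref{Theta}). Hence $\ln C^2\le\kappa_1(c)N+\mathrm{const}(c)$, which will contribute the $-\kappa_1(c)N$ summand in the exponent of $\kappa$.

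The lower constant $A$ in \eqref{estA} has three pieces. The prefactor $1/\bigl(2e\pi^2(2(\sigma c)^2+m)\bigr)$ yields the $c\kappa_0(c)/((\sigma c)^2+m)$ factor of $\kappa$. For the power $(4cR\eta)^{m(m-1)}$ I would use the crude lower bound $R\ge 2\sigma^2\min(\eta,c)\,e^{-\sigma^2\max(\eta,c)^2}$, which with $\eta$ fixed gives $\ln(4cR\eta)\ge 2\ln\sigma-\kappa_3'(c)\sigma^2+O_c(1)$; this contributes to $\ln\kappa$ a term $m(m-1)\bigl(2\ln\sigma-\kappa_3'(c)\sigma^2\bigr)+O(m^2)$. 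The factor $m^{1-m+2m^2}$ in the denominator contributes $-(2m^2+O(m))\ln m$. Collecting and absorbing all lower-order pieces into $c$-dependent constants produces precisely the $-m^2\bigl(-\kappa_2(c)\ln\sigma+\kappa_3(c)\sigma^2+\ln m\bigr)$ term of \eqref{eq_AAAAA}.

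\textbf{Main obstacle.} The bookkeeping is the main work; the conceptual step is routine. What requires care is the $\sigma$-dependence: one must verify that both the $\sigma\to 0$ and the $\sigma\to\infty$ degeneracies of $A$ produce exactly the claimed $m^2\ln\sigma$ and $m^2\sigma^2$ behaviour, and not worse. This hinges on choosing correctly which branch of the $\min$ defining $R$ dominates in each regime. In the PSWF case one additionally needs to confirm that the cutoff $K(N)$ from Theorem \ref{th_remez_prolates} remains $O(N)$ once $\eta$ (and hence $|E|$) is fixed independently of $N$; this is automatic from the second branch of \eqref{Theta} once $N\ge\max(2,ec)$, and the opposite regime is absorbed into the additive constants.
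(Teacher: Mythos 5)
Your proposal is correct and follows essentially the same route as the paper: it fixes $\eta$ (the paper also takes $\eta=\tfrac18$) in Proposition \ref{prop:gauss} to get the partial bound $A\|\widehat f\un_E\|^2$, then closes the blind spots with the model-appropriate Remez--Tur\'an inequality (\eqref{eq:remez}, \eqref{eq:remezpol}, or Theorem \ref{th_remez_prolates} with $K(N)=O(N)$), and absorbs the $\sigma$- and $m$-dependence of \eqref{estA} into the exponent exactly as in the paper's bookkeeping. No substantive differences.
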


\begin{remark}
For $V=V_N$ given by \eqref{eq_modb}, \eqref{eq_modc} and for $V=V_N$ given by \eqref{eq_moda}
when $N\geq \max(2,ec)$, $\kappa_0,\kappa_1$ do not depend on $c$.
\end{remark}

\begin{proof}
To obtain this result, we take $\eta =\dst \frac{1}{8}$ in Proposition \ref{prop:gauss}.
First note that if 
$\widetilde{E} = \dst\left[-\frac{3}{8} ,-\frac{1}{8}\right]\cup\left[\frac{1}{8}, \frac{3}{8}\right]$ and 
$E=\dst\left(\frac{2c}{m}(\tilde E+\Z)\right)\cap[-c,c]$ then $\dst\frac{c}{|E|}\geq \frac{1}{8}$
(say). Then \eqref{eq_ppp} tells us that
$$
A\norm{\widehat f\un_{E}}^2 \le \int_0^1 \sum_{k\in\ZZ}\left|f_t\left(\frac{m\pi}{c}k\right)\right|^2\,\mathrm{d}t \le \|\widehat f\|^2,\leqno\eqref{eq_ppp}
$$
for any $f\in PW_c$, where
\begin{equation*}
A= \frac{c}{2e\pi^2(2(\sigma c)^2+m)}\frac{(cR/2)^{m(m-1)}}{{m^{1-m+2m^2}}}
\quad\mbox{with}\quad
R = 2\sigma^2\min\left\{\frac{1}{8}e^{-(\sigma/8)^2}, ce^{-(\sigma c)^2}\right\}.
\end{equation*}
Note that
$$
\frac{cR}{2}=\min\left\{c\frac{\sigma^2}{8}e^{-(\sigma/8)^2}, c^2\sigma^2e^{-(\sigma c)^2}\right\}<1
$$
so that 
$$
\frac{(cR/2)^{m(m-1)}}{{m^{1-m+2m^2}}}\geq \left(\frac{cR/2}{m}\right)^{m^2}=\exp\Bigl(-m^2\bigl(-\gamma_1(c)\ln\sigma+\gamma_2(c)\sigma^2+\ln m\bigr)\Bigr).
$$
Finally $A\geq  \frac{c\gamma_0}{\bigl((\sigma c)^2+m\bigr)}\exp\Bigl(-m^2\bigl(-\gamma_1(c)\ln\sigma+\gamma_2(c)\sigma^2+\ln m\bigr)\Bigr)$ where $\gamma_0,\gamma_1(c),\gamma_2(c)$ are constants depending on $c$ only.

It remains to fix the blind spots $\norm{\widehat f\un_{E}}^2$ with the help of a Remez type inequality.
For $V=V_N$ given by \eqref{eq_modb}, \eqref{eq_modc} and $f\in V_N$, we simply have
$\norm{\widehat f\un_{E}}^2\geq \gamma_3^{2N+1}\norm{f}^2$ where $\gamma_3<1$ is a constant.

For $V=V_N$ given by \eqref{eq_moda}, $\norm{\widehat f\un_{E}}^2\geq \gamma_3^{2K(N)}\norm{f}^2$
where $K(N)$ is given by \eqref{Theta}
$$
K(N) = \begin{cases}
\max\left(\left\lceil\frac{3200(3+ec)}{c^3}\right\rceil,\left\lceil\frac{4ec}{|E|}\right\rceil\right)\leq\gamma_4(c)
 & \mbox{if }N \leq \max(2,ec),\\
\max\left(20,N,\left\lceil\frac{8(N+1)}{|E|}\right\rceil\right)\leq 64(N+1)
& \mbox{if } N \geq \max(2,ec).
\end{cases}
$$
Adding the estimates for fixing the blind spot yields \eqref{eq_AAAAA}.
\end{proof}

\begin{remark}
Theorem \ref{thm:gauss_body} immediately implies Theorem \ref{thm:gauss}. We also note that if $V=V_N$ is given by \eqref{eq_moda}  or \eqref{eq_modc}, the reconstruction can be done from measurements at a finite number of spacial locations. Indeed, our results imply that in this case one can find the coefficients of $f$ in its decomposition in a basis of $V$ via simple least squares. 
\end{remark}

\section{Sensor density, maximal spatial gaps and condition numbers}\label{gap}

In this section, we discuss irregular spatio-temporal sampling. We establish that stable reconstruction from dynamical samples may occur when the set $\Lambda$ has an arbitrarily small density. More importantly, however, we show that the density cannot be arbitrarily small for fixed frame bounds in \eqref{SemiContFrL}. In fact, we provide an explicit estimate for the maximal spatial gap in terms of the condition number $\frac BA$.

\begin {example} 
\label{ex_low_dens}
In this example, we take $c=1/2$ to simplify discussion.
Assume that $\phi\in\Phi$ is such that $\widehat{\Phi}$ is real, even, and decreasing on $[0,1/2]$. Let $\Lambda_0=m \Z$,  with $m \in \mathbb{N}$ odd,
$\Lambda_k=mn\Z+k$,  where $n$ is any fixed odd number and $k=1,\dots\frac {m-1} 2$ . Then  $\Lambda=\bigcup\limits_{k=0}^{\frac {m-1} 2} \Lambda_k$ has density $D^{-}(\Lambda) \leq 1/n+1/m$ %
and is a stable set of sampling, i.e.,  \eqref {SemiContFr}  is satisfied.
\end {example}

The claim in the last example follows by stringing together several theorems on dynamical sampling.  Firstly, \cite [Theorems 2.4 and 2.5]{ADK15} yield that any $f \in \ell^2(\Z)$ can be recovered from the space-time samples $\{\phi^j\ast f(x_k): j=0,\dots,m-1, \; x_k \in \Lambda \}$ and that  the problem of sampling and reconstruction in $PW_c$  on subsets of $\Z$ is equivalent to the sampling and reconstruction problem of sequences in $\ell^2(\Z)$. Secondly, combining  \cite  [Theorems 5.4 and 5.5] {AHP19}  shows that for $\phi\in\Phi$, $f\in PW_c$ can be stably reconstructed from $\{\phi^j\ast f(x_k): j=0,\dots,m-1, \; x_k \in \Lambda \}$ if and only if \eqref {SemiContFr}  is satisfied.

Example \ref{ex_low_dens} thus shows that \eqref{SemiContFr} can hold with sets having arbitrarily small densities. The goal of this section is to show that the maximal gap in such sets is controlled by the condition number $B/A$.

We first establish the following lemma, which parallels \cite[Proposition 4.4]{GRUV15}.

\begin{lemma}
\label{lem:estsinc}
Let $\phi\in \Phi$ be such that $\widehat\phi$ is $\mathcal{C}^1$-smooth on $I = [-c,c]$. 
Then there exists a finite constant $C_{\phi, L}$ such that 
	\begin{equation}
	\label{upest}
	\int_{0}^{L}|(\sinc (c\cdot)*\phi_t)(x)|^2\,\mathrm{d}t\leq \frac{C_{\phi, L}}{1+x^2}, \text{ for all }x\in\mathbb{R}.
	\end{equation}
On the other hand, setting $c_{\phi,L}=  \frac{2(\newmphi^{2L}-1)}{\pi^2\ln\newmphi}>0$, for
$|x|\leq \pi/2c$, we have
\begin{equation}\label{loest}
\int_{0}^{L}|(\sinc*\phi_t)(x)|^2\,\mathrm{d}t\geq c_{\phi,L}.
\end{equation}
\end{lemma}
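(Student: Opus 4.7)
My plan is to use the Fourier representation of the convolution in both parts. Since the paper's convention gives $\widehat{\sinc(c\cdot)}(\xi)=(\pi/c)\un_{[-c,c]}(\xi)$, Fourier inversion combined with the semigroup identity $\widehat{\phi_t}=\widehat\phi^t$ yields
\begin{equation*}
(\sinc(c\cdot)*\phi_t)(x)=\frac{1}{2c}\int_{-c}^c \widehat\phi(\xi)^t\,e^{ix\xi}\,\mathrm{d}\xi,
\end{equation*}
which serves as the common starting point. From this identity, the upper bound will be obtained by integration by parts in $\xi$, and the lower bound by taking real parts and exploiting positivity of the cosine near the origin.

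For the upper estimate, the trivial inequality $\widehat\phi(\xi)^t\leq 1$ on $[-c,c]$ immediately yields $|(\sinc(c\cdot)*\phi_t)(x)|\leq 1$, so $\int_0^L|(\sinc(c\cdot)*\phi_t)(x)|^2\,\mathrm{d}t\leq L$, handling the regime $|x|\leq 1$. For $|x|\geq 1$ I would integrate by parts once in $\xi$, using the $\mathcal{C}^1$-smoothness of $\widehat\phi$ on $[-c,c]$. The derivative brings down a factor $t\widehat\phi^{t-1}\widehat\phi'$; the key observation is that $\widehat\phi(\xi)^{t-1}\leq \newmphi^{-1}$ uniformly for all $t\in[0,L]$ (since $\newmphi\leq\widehat\phi\leq 1$ gives $\widehat\phi^{t-1}\leq \newmphi^{-\max(1-t,0)}\leq \newmphi^{-1}$). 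The boundary terms then contribute $O(1/|x|)$ and the bulk term contributes $O(t\,\|\widehat\phi'\|_\infty/(\newmphi|x|))$, producing a pointwise bound of the form $|(\sinc(c\cdot)*\phi_t)(x)|\leq C(\phi,L)/|x|$. Squaring, integrating in $t$, and combining with the small-$|x|$ bound gives the advertised $C_{\phi,L}/(1+x^2)$.

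For the lower estimate, since $\widehat\phi$ is real and positive on $[-c,c]$ (as required by $\widehat\phi\geq\newmphi>0$), so is $\widehat\phi^t$, and
\begin{equation*}
\mathrm{Re}\bigl((\sinc(c\cdot)*\phi_t)(x)\bigr)=\frac{1}{2c}\int_{-c}^c \widehat\phi(\xi)^t\cos(x\xi)\,\mathrm{d}\xi.
\end{equation*}
The hypothesis $|x|\leq\pi/(2c)$ forces $|x\xi|\leq\pi/2$ for every $\xi\in[-c,c]$, so the cosine is non-negative on the integration domain. I would then bound $\widehat\phi^t\geq\newmphi^t$ pointwise, evaluate $\int_{-c}^c\cos(x\xi)\,\mathrm{d}\xi=2\sin(cx)/x$, and invoke the Jordan-type inequality $\sin(y)/y\geq 2/\pi$ valid for $|y|\leq\pi/2$ to deduce $|(\sinc(c\cdot)*\phi_t)(x)|^2\geq 4\newmphi^{2t}/\pi^2$. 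The elementary geometric integral $\int_0^L\newmphi^{2t}\,\mathrm{d}t=(\newmphi^{2L}-1)/(2\ln\newmphi)$ then produces exactly $c_{\phi,L}$.

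The main technical subtlety I anticipate is in the upper bound: after integration by parts the factor $\widehat\phi(\xi)^{t-1}$ is potentially singular as $t\to 0^+$, but the uniform bound $\widehat\phi^{t-1}\leq\newmphi^{-1}$ noted above controls it cleanly and makes the resulting $t$-integral finite with an explicit constant depending only on $\newmphi$, $\|\widehat\phi'\|_\infty$, $c$, and $L$. The lower bound is essentially a direct computation once the cosine has been forced to be non-negative on the support of the integral.
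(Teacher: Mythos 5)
Your proposal is correct and follows essentially the same route as the paper: the same Fourier-inversion identity as the common starting point, a single integration by parts (with the uniform bound $\widehat\phi^{\,t-1}\leq\newmphi^{-1}$) for the decay in the upper estimate, and real parts plus non-negativity of $\cos(x\xi)$ together with $\widehat\phi^{\,t}\geq\newmphi^{\,t}$ and $\sinc(cx)\geq 2/\pi$ for the lower estimate. The only cosmetic difference is that the paper combines the trivial bound and the $O(1/|x|)$ bound directly rather than splitting into the regimes $|x|\leq 1$ and $|x|\geq 1$, but the content is identical.
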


\begin{proof}
Firstly, writing the Fourier inversion formula shows that
\begin{equation}
(\sinc(c\cdot)*\phi_t)(x)=\frac{1}{2c}\int_{-c}^{c}\bigl(\widehat \phi(\xi)\bigr)^te^{ix\xi}\,\mbox{d}\xi
\label{eq:sincconv}
\end{equation}
from which it follows that 
\begin{equation}\label{firstest}
|(\sinc(c\cdot)*\phi_t)(x)|\leq \frac{1}{2c}\int_{-c}^{c}|\widehat \phi(\xi)|^t\,\mbox{d}\xi\leq 1,
\end{equation}
due to $|\widehat\phi|\leq 1$.

Secondly, note that, due to its smoothness, $\widehat\phi^\prime$ is bounded 
by $E_\Phi:=\sup_{\xi\in[-c,c]}|\widehat\phi^\prime(\xi)|<+\infty$ on $[-c,c]$
Then, integrating \eqref{eq:sincconv} by parts leads to
\[
 x(\sinc(c\cdot)*\phi_t)(x) %
=\frac{\widehat{\phi}^t(c)e^{i cx}-\widehat{\phi}^t(-c)e^{-i cx}}{2 ic} 
-\frac{1}{2ic}\int_{-c}^{c}e^{i x\xi}t\widehat{\phi}^{t-1}(\xi)\widehat{\phi}^\prime(\xi)\,\mbox{d}\xi,
\]
and, as $\newmphi\leq\widehat \phi\leq 1$ on $I$, we deduce that
\[
|x(\sinc(c\cdot)*\phi_t)(x)|\leq \frac{1}{c}+\frac{E_\phi}{\newmphi}t.
\]
Consequently,
\[
x^2\int_0^L|(\sinc(c\cdot)*\phi_t)(x)|^2\,\mbox{d}t\leq \int_0^L\left(\frac{1}{c}+\frac{E_\phi}{\newmphi}t\right)^2dt
=\frac{\newmphi}{3E_\phi}\left(\left(\frac{1}{c}+\frac{E_\phi}{\newmphi}L\right)^3- \frac1{c^3}\right),
\]
and the estimate \eqref{upest} follows in view of \eqref{firstest}.

On the other hand \eqref{eq:sincconv}  implies that
$$
|(\sinc(c\cdot)*\phi_t)(x)| \geq |\Re (\sinc(c\cdot)*\phi_t)(x)|
= \left|\frac{1}{2c}\int_{-c}^{c}\widehat \phi(\xi)^t\cos x\xi \,\mbox{d}\xi\right|.
$$
But, for $|\xi|\leq c$, we have $\widehat\phi(\xi)^t\geq \newmphi^t$. Further, if we also have
$|x|\leq \pi/2c$, then $\cos 2x\xi\geq 0$. Therefore,
\[
|(\sinc(c\cdot)*\phi_t)(x)| \geq
\frac{1}{2c}\int_{-c}^{c}\widehat \phi(\xi)^t\cos x\xi \,\mbox{d}\xi
\geq \newmphi^t\frac{1}{2c}\int_{-c}^{c}\cos x\xi \,\mbox{d}\xi
=\newmphi^t\sinc(cx)\geq \frac{2}{\pi}\newmphi^t
\]
since $\sinc(cx)$ is decreasing on $[0,\pi/2c]$ and $\dst\sinc\left(c\frac{\pi}{2c}\right)=\frac{2}{\pi}$.
It follows that
\[
\int_0^L|(\sinc*\phi_t)(x)|^2\,\mbox{d}t\geq \frac{4}{\pi^2}\int_0^L \newmphi^{2t}dt
=  \frac{2(\newmphi^{2L}-1)}{\pi^2\ln\newmphi}> 0,
\]
and %
 we get the desired result.
\end{proof}

\begin{remark}
If $\widehat\phi(\xi)=e^{-\sigma^2\xi^2}$, $\sigma\not=0$, then $\newmphi = e^{-(\sigma c)^2}$ 
and we may take $E_\phi = \sqrt{\frac{2}{e}}|\sigma|$. Therefore, the constants $c_{\phi,L}$ and $C_{\phi,L}$
in the above lemma can be taken as
\begin{equation}\label{cest}
C_{\phi,L} = \frac{1}{c^3}+\bigl(1+\sigma^2e^{(\sigma c)^2}\bigr) L^3
\quad \mbox{ and } \quad 
c_{\phi,L} = \frac{2(1-e^{-2L(\sigma c)^2})}{\pi^2(\sigma c)^2} .
\end{equation}
For the estimate of $C_{\phi,L}$ we have used that
$$
\frac{1}{3\alpha}(a+\alpha b)=a^2b+\alpha ab^2+\frac{\alpha^2}{3}b^3
\leq a^3+\frac{b^3}{3}(1+2\alpha^{3/2}+\alpha^2)\leq a^3+b^2(1+\alpha^2)
$$
with H\"older.
\end{remark}

\begin{theorem}\label{ConGapThm}
Let $\phi\in\Phi$ and assume that $\widehat\phi$ is $\mathcal{C}^1$-smooth on $[-c,c]$. 
Assume that $\Lambda\subseteq\mathbb{R}$ is a stable sampling set for  Problem \ref{pro1} with frame bounds 
$A$, $B$ (i.e., \eqref{SemiContFrL} holds:
\begin{equation*}
A \norm{f}_2^2 \leq \int_0^L \sum_{\lambda \in \Lambda} \abs{(f*\phi_t)(\lambda)}^2 dt \leq B \norm{f}_2^2, 
\text{ for all } f \in PW_c.
\leqno\eqref{SemiContFrL}
\end{equation*}
Let $c_{\phi,L}$ and $C_{\phi,L}$ be the constants from Lemma \ref{lem:estsinc}. Then for
$R\geq \dst\max\left(\frac{\pi}{c},\frac{8c}{\pi}\frac{B}{A}\frac{C_{\Phi,L}}{c_{\Phi,L}}\right)$ and every 
$a\in\mathbb{R}$, we have $[a-R,a+R]\cap\Lambda\neq\emptyset$.
Further, we have $D^-(\Lambda)\geq \dst\min\left(\frac{c}{2\pi},
\frac{\pi}{16c}\frac{A}{B}\frac{c_{\Phi,L}}{C_{\Phi,L}}\right)$ and $\dst D^+(\Lambda)\leq 4\frac{B}{c_{\Phi,L}}$.
\end{theorem}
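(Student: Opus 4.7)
The plan is to apply the frame inequality \eqref{SemiContFrL} to the normalized shifted sinc $f_a(x)=\sinc(c(x-a))$, which lies in $PW_c$ with $\|f_a\|_2^2=\pi/c$. Since $(f_a\ast\phi_t)(\lambda)=(\sinc(c\cdot)\ast\phi_t)(\lambda-a)$ by \eqref{eq:sincconv}, interchanging sum and integral yields
\begin{equation*}
A\frac{\pi}{c}\ \leq\ \sum_{\lambda\in\Lambda}\int_0^L\bigl|(\sinc(c\cdot)\ast\phi_t)(\lambda-a)\bigr|^2\,\d t\ \leq\ B\frac{\pi}{c}.
\end{equation*}
Lemma \ref{lem:estsinc} then provides both a pointwise upper bound $C_{\phi,L}/(1+x^2)$ and a local lower bound $c_{\phi,L}$ (for $|x|\leq \pi/(2c)$) for the integrand, which we exploit in two steps.

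\textbf{Step 1: Upper Beurling density.} Apply the upper frame bound together with \eqref{loest}, keeping only the samples in $[a-\tfrac{\pi}{2c},a+\tfrac{\pi}{2c}]$:
\begin{equation*}
c_{\phi,L}\cdot\#\bigl(\Lambda\cap[a-\tfrac{\pi}{2c},a+\tfrac{\pi}{2c}]\bigr)\ \leq\ B\frac{\pi}{c}.
\end{equation*}
Dividing by $\pi/c$ shows every interval of length $\pi/c$ contains at most $B/c_{\phi,L}$ points, from which $D^+(\Lambda)\leq 4B/c_{\phi,L}$ follows by a standard covering argument (any interval $[a-r,a+r]$ with $r\geq \pi/(2c)$ is covered by at most $1+2rc/\pi\leq 4rc/\pi$ intervals of length $\pi/c$).

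\textbf{Step 2: Maximal gap.} Suppose for contradiction that $[a-R,a+R]\cap\Lambda=\emptyset$. Partition the remaining samples into annular pieces $\Lambda_k=\{\lambda\in\Lambda: R+k\delta\leq|\lambda-a|<R+(k+1)\delta\}$ with $\delta=\pi/c$. By Step 1, $|\Lambda_k|\leq 2B/c_{\phi,L}$, and on each annulus the integrand is bounded by $C_{\phi,L}/(R+k\delta)^2$ via \eqref{upest}. Summing and using the integral test
\begin{equation*}
\sum_{k\geq 0}\frac{1}{(R+k\delta)^2}\ \leq\ \frac{1}{R^2}+\frac{1}{R\delta}\ \leq\ \frac{2}{R\delta}\qquad(R\geq\delta=\pi/c),
\end{equation*}
the lower frame inequality yields $A\pi/c\leq C_{\phi,L}\cdot(2B/c_{\phi,L})\cdot(2c)/(R\pi)$, i.e.\ $R\leq \frac{4c}{\pi}\cdot\frac{B}{A}\cdot\frac{C_{\phi,L}}{c_{\phi,L}}$. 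This contradicts the assumption on $R$ (with ample slack to absorb the factor $2$ discrepancy with the theorem), so some sample must lie in $[a-R,a+R]$. The lower Beurling density bound then follows immediately: any interval of length $2r$ with $r$ exceeding the maximum gap contains at least one sample, whence $D^-(\Lambda)\gtrsim 1/R$.

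The main obstacle is controlling the tail sum $\sum_{|\lambda-a|>R}(1+(\lambda-a)^2)^{-1}$ in Step 2; this is precisely where Step 1 plays its essential role, turning an a priori uncontrolled series into a convergent one that decays like $1/R$. Every other ingredient (the choice of sinc test function, exchange of sum and integral, covering argument for Beurling densities) is routine once the density bound from Step 1 is in hand.
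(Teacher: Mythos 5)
Your proposal follows essentially the same route as the paper's proof: test the frame inequality against shifted sinc functions, use the local lower bound of Lemma \ref{lem:estsinc} together with the upper frame bound to control the number of samples in any short interval (hence $D^+(\Lambda)$), and then run the contradiction argument combining the $C_{\phi,L}/(1+x^2)$ decay, the lower frame bound, and the per-interval count to bound the maximal gap and deduce $D^-(\Lambda)\geq 1/(2R)$. The only discrepancies are in constant bookkeeping (e.g., an interval of length $\pi/c$ contains at most $\frac{\pi}{c}\frac{B}{c_{\phi,L}}$ points rather than $\frac{B}{c_{\phi,L}}$), and these wash out so that the final bounds still hold with the slack you note.
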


\begin{proof}
Denoting $I_a = [a-\pi/4c,a+\pi/4c]$, $a\in\R$, let us bound the covering number
\[
n_{\Lambda}:=\sup\limits_{a\in\mathbb{R}}\#\left(\Lambda\cap I_a \right). 
\] 

We use \eqref{loest}, i.e.,~the fact that $\dst\int_0^L|(\sinc(c\cdot)*\phi_t)(x)|^2\,\mbox{d}t\geq c_{\Phi,L}$ 
for $|x|\leq \pi/2c$, and our first observation to obtain 
\begin{eqnarray*}
\#\left(\Lambda\cap I_a \right)&\leq &\frac{1}{c_{\Phi,L}}
\sum\limits_{\lambda\in\Lambda\cap I_a}\int_0^L|(\sinc (c\cdot)*\phi_t)(\lambda-a)|^2\mbox{d}t\\
&\leq& 
\frac{1}{c_{\Phi,L}}
\sum\limits_{\lambda\in\Z}\int_0^L|(\sinc (c\cdot)*\phi_t)(\lambda-a)|^2\mbox{d}t
\leq \frac{B}{c_{\Phi,L}}\norm{\sinc c(t - a)}^2
\end{eqnarray*}
where we applied \eqref{SemiContFrL} to $f(t)=\sinc c(t - a)$ for all $a\in\mathbb{R}$.
As $\hat f(\xi)=\dst\frac{\pi}{c}\un_{[-c,c]}$, Parseval's relation gives
$\norm{f}^2%
=\frac{\pi}{c}$ hence
\begin{equation}\label{upp_Bnd_Fn1}
n_{\Lambda}\leq\frac{\pi}{c}\frac{ B}{c_{\Phi,L}}.
\end{equation}
As a first consequence, this implies that $D^+(\Lambda)\leq 4\frac{B}{c_{\Phi,L}}$.

Now we assume that for some $a_0\in\mathbb{R}$, and some $R\ge\dst\frac{\pi}{c}$, $\Lambda\cap[a_0-R,a_0+R]=\emptyset$.
As the Paley-Wiener space is invariant under translation, if \eqref{SemiContFrL} holds for $\Lambda$,
it also holds for its translates, so that we may assume that $a_0=0$.

From Lemma \ref{lem:estsinc}, there exists $C_{\Phi,L}$ such that 
$\dst\int_{0}^{L}|(\sinc (c\cdot)*\phi_t)(x)|^2\mathrm{d}t\leq C_{\Phi,L}/(1+x^2)$. Therefore, we have 
the following estimates
\begin{eqnarray*}
\frac{\pi}{c}A&\leq&\sum\limits_{\lambda\in\Lambda}\int_0^L|(\sinc(c\cdot)*\phi_t)(\lambda)|^2\,\mbox{d}t 
\leq\sum\limits_{\lambda\in\Lambda}\frac{C_{\Phi,L}}{1+\lambda^2}\\
&\leq&\sum\limits_{k=0}^\infty\sum\limits_{\lambda \in\Lambda\cap [R+k\pi/2c,R+(k+1)\pi/2c]}
\frac{C_{\Phi,L}}{1+\lambda^2}
+\sum\limits_{k=0}^\infty\sum\limits_{\lambda\in\Lambda\cap [-R-(k+1)\pi/2c,-R-k\pi/2c]}
\frac{C_{\Phi,L}}{1+\lambda^2}\\
&\leq&2n_{\Lambda}\sum\limits_{k=0}^\infty\frac{C_{\Phi,L}}{1+(R+k\pi /2c)^2}
\leq  4\frac{C_{\Phi,L}B}{c_{\Phi,L}}\int_{R-\pi/2c}^{\infty}\frac{\mathrm{d}x}{1+x^2}\\
&\leq& 4\frac{C_{\Phi,L}B}{c_{\Phi,L}}\int_{R/2}^{\infty}\frac{\mathrm{d}x}{x^2}=8\frac{C_{\Phi,L}B}{c_{\Phi,L}R}
\end{eqnarray*}
since we assumed that $R\geq\pi/c$. It follows that
$R\leq\dst
\frac{8c}{\pi}\frac{B}{A}\frac{C_{\Phi,L}}{c_{\Phi,L}}$.
Finally, note that this implies that $D^-(\Lambda)\geq \frac{1}{2R}$.
\end{proof}

\begin{remark}
Computing the explicit estimate for $\frac{C_{\Phi,L}}{c_{\Phi,L}}$, we observe that the maximal allowed gap in spacial measurements grows  with $L$, which is to be expected. For the Gaussian, we may take the constant 
$\frac{C_{\Phi,L}}{c_{\Phi,L}}$ to be $O(L^2)$ (see \eqref{cest}).
The above results also shows that for $\mathcal{C}^1$-smooth functions $\phi$, stable sampling sets must have positive lower density. %
\end{remark}
\begin{remark}
Theorem \ref{ConGapThm} immediately implies Theorem \ref{thm_gap_intro}.
\end{remark}

\section{Acknowledgments.}
K.\ G.\ was
  supported in part by the  project P31887-N32  of the
Austrian Science Fund (FWF), and J.~L.~R.~gratefully acknowledges
support from the Austrian Science Fund (FWF): Y 1199 and P 29462. 

The authors are also grateful for the hospitality of various conferences, where we were able to work together on this project. We thank the hosts and organizers of all those events, in particular: ICERM, University of Bordeaux,  and Vanderbilt University.

Finally, it gives us great pleasure to dedicate this paper to  Guido Weiss on the occasion of his $90^{th}$ birthday. To a dear friend who taught so much to so many: Merry Guidmas!

\bibliographystyle{plain}

\begin{thebibliography}{1}

\bibitem{ACCMP17}
A.~Aldroubi, C.~Cabrelli, A.F. \c{C}akmak, U.~Molter, and A.~Petrosyan.
\newblock Iterative actions of normal operators.
\newblock {\em J. Funct. Anal.}, 272(3):1121--1146, 2017.

\bibitem{ACMT17}
A.~Aldroubi, C.~Cabrelli, U.~Molter, and S.~Tang.
\newblock Dynamical sampling.
\newblock {\em Applied and Computational Harmonic Analysis}, 42(3):378--401,
  2017.
\newblock doi: 10.1016/j.acha.2015.08.014.


\bibitem{ADK13}
A.~Aldroubi, J.~Davis, and I.~Krishtal.
\newblock Dynamical sampling: time-space trade-off.
\newblock {\em Appl. Comput. Harmon. Anal.}, 34:495--503, 2013.

\bibitem{ADK15}
A.~Aldroubi, J.~Davis, and I.~Krishtal.
\newblock Exact reconstruction of signals in evolutionary systems via
  spatiotemporal trade-off.
\newblock {\em J. Fourier Anal. Appl.}, 21(1):11--31,
  2015.

\bibitem{AHP19}
A.~Aldroubi, L.X. Huang, and A.~Petrosyan.
\newblock Frames induced by the action of continuous powers of an operator.
\newblock {\em J. Math. Anal. Appl.}, 478:1059--1084, 2019.

\bibitem{Andrews} 
G.~E.~Andrews, R.~Askey, and R.~Roy.
Special Functions,
Cambridge University Press, 2000.

\bibitem{BT17}
B.~Beckermann and A.~Townsend.
\newblock On the singular values of matrices with displacement structure.
\newblock {\em SIAM J. Matrix Anal. Appl.}, 38:1227--1248, 2017.

\bibitem{BJK}
A.~Bonami, Ph.~Jaming and A.~Karoui
\newblock Non-asymptotic behaviour of the spectrum of the sinc-kernel operator and related applications.
\newblock Available at {\em ArXiv:1804.01257}.

\bibitem{BE95}
P.~Borwein and T.~Erd\'elyi.
\newblock  Polynomials and Polynomial Inequalities,
Springer, New York, 1995

\bibitem{FO01}
D.~Fasino and V.~Olshevsky.
\newblock How bad are symmetric {P}ick matrices?
\newblock In {\em Structured matrices in mathematics, computer science, and
  engineering, {I} ({B}oulder, {CO}, 1999)}, volume 280 of {\em Contemp.
  Math.}, pages 301--311. Amer. Math. Soc., Providence, RI, 2001.

\bibitem{FR05} M.~Fornasier and H.~Rauhut. \newblock Continuous frames, function spaces, and the discretization problem. \newblock {\em J. Fourier Anal. Appl.}, 11(3):245--287, 2005.

\bibitem{G62}
W.~Gautschi.
\newblock On inverses of {V}andermonde and confluent {V}andermonde matrices.
\newblock {\em Numer. Math.}, 4:117--123, 1962.

\bibitem{GRUV15}
K.~Gr\"ochenig, J.L. Romero, J.~Unnikrishnan, and M.~Vetterli.
\newblock On minimal trajectories for mobile sampling of bandlimited fields.
\newblock {\em Appl. Comput. Harmon. Anal.}, 39(3):487--510, 2015.

\bibitem{Gu10}
A.~D.~G\"{u}ng\"{o}r.
\newblock Erratum to ``{A}n upper bound for the condition number of a matrix in
  spectral norm'' [{J}. {C}omput. {A}ppl. {M}ath. 143 (2002) 141--144]
\newblock {\em J. Comput. Appl. Math.}, 234:316, 2010.

\bibitem{JKS}
Ph.~Jaming, A.~Karoui, and S.~Spektor.
\newblock  The approximation of almost time and band limited functions by their
expansion in some orthogonal polynomials bases.
\newblock {\em J. Approx. Theory}, 212:41--65, 2016.

\bibitem{Lu_2009}
Y.M. Lu and M.Vetterli.
\newblock Spatial super-resolution of a diffusion field by temporal
  oversampling in sensor networks.
\newblock In {\em 2009 {IEEE} International Conference on Acoustics, Speech and
  Signal Processing}. {IEEE}, apr 2009.

\bibitem{Na}
F.~Nazarov.
\newblock Local estimates for exponential polynomials and their applications to inequalities of the uncertainty principle type.
\newblock {\em Algebra i Analiz.}, 5:3--66, 1993.

\bibitem{P77}
A.~Papoulis.
\newblock Generalized sampling expansion.
\newblock {\em IEEE Trans. Circuits and Systems}, CAS-24(11):652--654, 1977.

\bibitem{PP02}
G.~Piazza and T.~Politi.
\newblock An upper bound for the condition number of a matrix in spectral norm.
\newblock {\em J. Comput. Appl. Math.}, 143(1):141--144, 2002.

\bibitem{Ranieri2011SamplingAR}
Juri Ranieri, Amina Chebira, Yue~M. Lu, and Martin Vetterli.
\newblock Sampling and reconstructing diffusion fields with localized sources.
\newblock {\em 2011 IEEE International Conference on Acoustics, Speech and
  Signal Processing (ICASSP)}, pages 4016--4019, 2011.

\bibitem{prolate1}
D.~Slepian and H.~O.~Pollak
\newblock Prolate spheroidal wave functions, Fourier analysis and
uncertainty I.
\newblock {\em Bell System Tech. J.} 43: 3009--3058, 1964.%


\bibitem{Xiao}
H.~Xiao, V.~Rokhlin, and N.~Yarvin.
\newblock Prolate spheroidal wave functions, quadrature and interpolation.
\newblock {\em Inverse Problems}, 17:805--838, 2001.

\bibitem{YG97}
Y.-S.~Yu and D.-H.~Gu.
\newblock A note on a lower bound for the smallest singular value.
\newblock {\em Linear Algebra Appl.}, 253:25--38, 1997.

\end{thebibliography}

\end{document}